\newtheorem{theorem}{\bf Theorem}[section]
\newtheorem{lemma}[theorem]{\bf Lemma}
\newtheorem{proposition}[theorem]{\bf Proposition}
\newtheorem{corollary}[theorem]{\bf Corollary}
\newtheorem{definition}[theorem]{\bf Definition}
\author[C. Acciarri]{Cristina Acciarri}
\address{Department of Mathematics, University of Brasilia, 70910-900 Bras\'ilia DF, Brazil}
\email{acciarricristina@yahoo.it}
\author[D.\,S. da Silveira ]{Danilo San\c c\~ao da Silveira}
\address{Department of Mathematics, Federal University of Goias, 75704-020 Catal\~ao GO, Brazil}
\email{sancaodanilo@ufg.br}
\keywords{Profinite groups, Automorphisms, Centralizers, Engel elements}
\subjclass[2010]{20D45, 20E18, 20F40, 20F45}
\thanks{This work was supported by the Conselho Nacional de Desenvolvimento Cient\'{\i}fico e Tecnol\'ogico (CNPq), Brazil. }
\title[Profinite groups and centralizers of coprime automorphisms]{ Profinite groups and centralizers of coprime automorphisms whose elements are Engel}
\begin{document}

\begin{abstract}  Let $q$ be a prime, $n$ a positive integer and $A$ an elementary abelian group of order $q^r$ with $r\geq2$ acting on a finite $q'$-group $G$.  The following results are proved.

We show that if all elements in $\gamma_{r-1}(C_G(a))$ are $n$-Engel in $G$ for any $a\in A^\#$, then $\gamma_{r-1}(G)$ is $k$-Engel for some $\{n,q,r\}$-bounded number $k$,  and  if, for some integer $d$ such that $2^d\leq r-1$, all elements in the $d$th derived group of $C_G(a)$ are $n$-Engel in $G$ for any $a\in A^\#$, then the $d$th derived group $G^{(d)}$ is $k$-Engel for some $\{n,q,r\}$-bounded number $k$.

Assuming $r\geq 3$ we prove that if all elements in $\gamma_{r-2}(C_G(a))$ are $n$-Engel in $C_G(a)$ for any $a\in A^\#$, then $\gamma_{r-2}(G)$ is $k$-Engel for some $\{n,q,r\}$-bounded number $k$, and  if, for some integer $d$ such that $2^d\leq r-2$, all elements in the $d$th derived group of $C_G(a)$ are $n$-Engel in $C_G(a)$ for any $a\in A^\#,$ then the $d$th derived group $G^{(d)}$ is $k$-Engel for some $\{n,q,r\}$-bounded number $k$.

Analogue (non-quantitative) results for profinite groups are also obtained.
\end{abstract}

\maketitle
\section{Introduction}
Let $A$ be a  finite group acting on a finite group $G$. Many well-known results show that the structure of the centralizer $C_G(A)$ (the fixed-point subgroup) of $A$ has influence over the structure of $G$. The influence is especially strong if $(|A|,|G|)=1$, that is, the action of $A$ on $G$ is coprime. Following the solution of the restricted Burnside problem it was discovered that the exponent of $C_G(A)$ may have strong impact over the exponent of $G$. Remind that a group $G$ is said to have exponent $n$ if $x^n=1$ for every $x\in G$ and $n$ is the minimal positive integer with this property. The next theorem was obtained in \cite{KS}.

\begin{theorem}\label{q2}
Let $q$ be a prime, $n$ a positive integer and $A$ an elementary abelian group of order $q^2$. Suppose that $A$ acts coprimely on a finite group $G$ and assume that $C_{G}(a)$ has exponent dividing $n$ for each $a\in A^{\#}$. Then the exponent of $G$ is $\{n,q\}$-bounded.
\end{theorem}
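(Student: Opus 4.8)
The plan is to prove the theorem by the Lie-theoretic method built around Zelmanov's solution of the restricted Burnside problem, fed by the coprime generation of $G$ from its fixed-point subgroups. The two structural facts I would use repeatedly are: first, since $A$ is noncyclic abelian and the action is coprime, $G=\langle C_G(a_1),\dots,C_G(a_{q+1})\rangle$, where $a_1,\dots,a_{q+1}$ generate the $q+1$ subgroups of order $q$ in $A$; second, for every $A$-invariant normal subgroup $N$ one has $C_{G/N}(a)=C_G(a)N/N$. Together these let me pass the hypothesis ``$\exp C_G(a)\mid n$ for all $a\in A^{\#}$'' down to every $A$-invariant section of $G$, so the class of groups under consideration is closed under the reductions below.

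\emph{Reduction to nilpotent, then to $p$-groups.} Note first that $G$ itself need not be soluble (a direct power of a fixed nonabelian simple group has bounded exponent), so I cannot hope to make $G$ soluble; instead I would invoke the results based on the classification of finite simple groups (CFSG) that control nonabelian composition factors, to show that the nonsoluble part contributes only a $\{n,q\}$-bounded factor to the exponent and thereby reduce to soluble $G$. For soluble $G$ the Fitting height is $\{n,q\}$-bounded: each $C_G(a)$ is then a soluble group of exponent $n$, hence of $n$-bounded derived length (a consequence of the solution of the restricted Burnside problem), and the standard theorems bounding the Fitting height of a soluble group in terms of that of the fixed points of a coprime automorphism apply. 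Since $\exp G$ divides the product of the exponents of the factors of the Fitting series, of which there are $\{n,q\}$-boundedly many and each is again an $A$-invariant section satisfying the hypothesis, it suffices to bound the exponent of a nilpotent group. A nilpotent $G$ is the direct product of its $A$-invariant Sylow subgroups, and as $A$ is a $q$-group acting coprimely every relevant prime $p$ differs from $q$; restricting to a Sylow $p$-subgroup $P$ with $\exp C_P(a)\mid n$ reduces the problem to bounding the exponent of a finite $p$-group.

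\emph{The $p$-group case via Lie methods.} Here I would form the associated Lie ring $L=L_p(G)=\bigoplus_i \gamma_i/\gamma_{i+1}$ over the field with $p$ elements, which is generated by $L_1=G/\Phi(G)$ and carries a coprime action of $A$ with $L=\sum_{a\in A^{\#}}C_L(a)$; the group-level generation lifts to show that $L$ is generated by the fixed-point subalgebras $C_L(a_i)$. The exponent hypothesis enters through the classical fact that an element of $G$ of order dividing $p^{k}$ yields an element of $L_1$ that is ad-nilpotent of index bounded in terms of $p^{k}$; thus every element of each $C_L(a_i)$, and indeed every element of $L_1$, is ad-nilpotent of $\{n,q\}$-bounded index. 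To invoke Zelmanov's theorem I would first produce a polynomial identity for $L$: the fixed-point subalgebra $C_L(A)$ of the whole group $A$ corresponds to $C_G(A)$, a $p$-group of exponent dividing $n$, whose associated Lie ring satisfies a polynomial identity by the Lie-theoretic content of the solution of the restricted Burnside problem; then by the Bahturin--Zaicev theorem, a coprime automorphism group whose fixed points satisfy a polynomial identity forces $L$ itself to be PI. Concluding that $L$, and therefore $G$, is nilpotent of $\{n,q\}$-bounded class, the exponent bound follows, since a nilpotent group of bounded class generated by subgroups of exponent dividing $n$ has $\{n,q\}$-bounded exponent.

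The main obstacle is precisely this Lie step: arranging that Zelmanov's theorem yields a class bound depending only on $n$ and $q$. Two points require care. One is securing the polynomial identity for $L$ uniformly in $p$, for which the rank-two hypothesis is essential: it is the decomposition $L=\sum_{a\in A^{\#}}C_L(a)$ into the $q+1$ fixed-point subalgebras, rather than any single centralizer, that makes the fixed points of $A$ sufficiently constrained. The other, more serious, is that $L$ is generated by $L_1$ with $G$ far from boundedly generated, whereas the quantitative form of Zelmanov's theorem bounds the nilpotency class in terms of the number of ad-nilpotent generators, the ad-nilpotency index, and the degree of the identity; the entire difficulty lies in leveraging the bounded ``width'' provided by the $q+1$ generating subalgebras $C_L(a_i)$ to extract a class bound that is independent of the number of ring generators. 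Everything else --- the coprime generation, the behaviour of fixed points under sections, and the Fitting-height and CFSG inputs --- is standard, so the whole weight of the proof rests on making the restricted-Burnside machinery deliver a $\{n,q\}$-bounded class for $L$.
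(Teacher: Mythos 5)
This statement is not proved in the paper at all: it is quoted from Khukhro--Shumyatsky \cite{KS}, and the paper only lists the three pillars of that proof (Zelmanov's Lie-theoretic machinery, the Lubotzky--Mann theory of powerful $p$-groups, and Bahturin--Zaicev). Your proposal reproduces two of these pillars but has a genuine gap precisely at the point you yourself flag, and the gap cannot be closed along the route you describe, because your intermediate goal is false: under the hypotheses neither $G$ nor its Lie algebra need be nilpotent of $\{n,q\}$-bounded class. Indeed, take a finite $p$-group $P$ of exponent $n$ and arbitrarily large nilpotency class (such exist, e.g.\ restricted Burnside groups of large rank) and let $A$ of order $q^2$, $q\neq p$, permute the coordinates of the direct power $P^{q^2}$ regularly; every $C_{P^{q^2}}(a)$ has exponent $n$, yet the class of $P^{q^2}$ is unbounded. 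The idea you are missing is a group-theoretic reduction, not a strengthening of the Lie machinery: to bound $\exp G$ it suffices to bound the order of each single element $x$, and $x$ lies in the $A$-invariant subgroup $\langle x^A\rangle$, which is generated by at most $q^2$ elements and inherits the hypothesis since $C_H(a)=H\cap C_G(a)$. All Lie-theoretic work is done inside such boundedly generated subgroups, which is what makes the quantitative Zelmanov theorem (Theorem \ref{Z1}) applicable. Even there, two steps you leave untouched are essential: one must extend scalars by a primitive $q$th root of unity and generate by common eigenvectors for $A$ (rank $2$ guarantees that every eigenvector, and every commutator of eigenvectors, is centralized by some $a\in A^{\#}$, which is the only way to control commutators in the generators), and one must show that arbitrary, non-homogeneous, elements of $C_{\overline{L}}(a)$ are ad-nilpotent of bounded index --- this uses the restricted Burnside problem again (a $(q-1)$-generated subgroup of $C_G(a)$ has $\{n,q\}$-bounded order, so the corresponding subring is nilpotent of bounded class) together with Lemma \ref{lemmanovo}.

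The second missing pillar is Lubotzky--Mann, and its absence makes your concluding step invalid. The classical fact ``$x$ of order $p^k$ has ad-nilpotent image of index at most $p^k$'' holds for the Lie algebra of the Zassenhaus--Jennings--Lazard ($p$-dimension central) series, not for $\bigoplus\gamma_i/\gamma_{i+1}$ (your definition conflates the two); and for the Jennings--Lazard algebra, nilpotency of $L_p(G)$ of bounded class does \emph{not} bound the class of $G$. What it gives (Theorem \ref{finiteL}) is a powerful characteristic subgroup $K$ of bounded index; one then uses the product decomposition of an $A$-invariant $p$-group as $C(a_1)\cdots C(a_{q+1})$ and the Lubotzky--Mann fact that a powerful $p$-group generated by elements of order dividing $m$ has exponent dividing $m$ \cite{luma}, whence $\exp K$ divides the $p$-part of $n$ and $\exp P$ divides $[P:K]\cdot\exp K$. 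Finally, your reduction to $p$-groups is both far heavier than necessary and flawed in one claim: a soluble group of exponent $n$ need not have $n$-bounded derived length (finite groups of exponent $p\geq 5$ have unbounded derived length); it has $n$-bounded Fitting height by Hall--Higman theory. But no CFSG or Fitting-height input is needed at all: $\exp G$ is the product over $p$ of the exponents of $A$-invariant Sylow $p$-subgroups $P_p$, every prime divisor of $|G|$ divides $n$ (if $P_p\neq 1$ then $C_{P_p}(a)\neq 1$ for some $a$, by coprime generation), so there are $n$-boundedly many factors and each $P_p$ inherits the hypothesis.
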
  
Here and throughout the paper $A^{\#}$ denotes the set of nontrivial elements of $A$. Moreover we use the expression ``$\{a,b,\dots\}$-bounded'' to abbreviate ``bounded from above in terms of  $a,b,\dots$ only''. The proof of  the above result involves a number of deep ideas. In
particular, Zelmanov's techniques that led to the solution of the restricted Burnside problem \cite{Z1,Z0} are combined with the Lubotzky--Mann theory of powerful $p$-groups \cite{luma}, 
and a theorem of Bahturin and Zaicev on Lie algebras admitting a group of automorphisms whose fixed-point subalgebra is PI \cite{BZ}.

A profinite (non-quantitative) version of the above theorem was established in \cite{eu}. In the context of profinite groups all the usual concepts of groups theory are interpreted topologically. In particular, by a subgroup of a profinite group we always mean a closed subgroup and   a subgroup is said to be generated by a set $S$ if it is topologically generated by $S$. By an automorphism of a profinite group we mean a continuous automorphism.  We say that a group $A$ acts on a profinite group $G$ \emph{coprimely} if  $A$ is finite while $G$ is an inverse limit of finite groups whose orders are relatively prime to the order of $A$.   
The profinite (non-quantitative) version of Theorem \ref{q2} is as follows.

\begin{theorem}
Let $q$ be a prime and $A$ an elementary abelian group of order  $q^2$. Suppose that $A$ acts coprimely on a profinite group $G$ and assume that $C_G(a)$ is torsion for each $a\in A^{\#}$. Then $G$ is locally finite.
\end{theorem}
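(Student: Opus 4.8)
The plan is to deduce the statement from the finite, quantitative Theorem~\ref{q2} by passing to finite quotients, the only genuinely new input being the observation that a torsion profinite group has finite exponent.

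First I would record that, for each $a\in A^{\#}$, the centralizer $C_G(a)$ is a torsion profinite group, and hence has finite exponent. Indeed, writing $C=C_G(a)$ and $C_m=\{x\in C:x^{m}=1\}$, each $C_m$ is closed (it is the preimage of $\{1\}$ under the continuous map $x\mapsto x^{m}$) and $C=\bigcup_{m\ge 1}C_m$; since $C$ is compact, the Baire category theorem forces some $C_m$ to have nonempty interior, and from this one concludes that $\exp C$ is finite. As $A^{\#}$ is a finite set (it has $q^{2}-1$ elements), I may then fix a single integer $n$ with $\exp C_G(a)\mid n$ for every $a\in A^{\#}$.

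Next I would pass to finite quotients. The open normal $A$-invariant subgroups of $G$ form a base of neighbourhoods of $1$: given an open normal subgroup $M$, replacing it by $\bigcap_{\alpha\in A}M^{\alpha}$ produces an open normal $A$-invariant subgroup contained in $M$. Hence $G=\varprojlim_{N}G/N$ over such $N$. For each of them $G/N$ is a finite group on which $A$ acts coprimely, and the standard coprime-action centralizer formula gives $C_{G/N}(a)=C_G(a)N/N$; in particular $\exp C_{G/N}(a)\mid n$ for all $a\in A^{\#}$. Theorem~\ref{q2} now applies to $G/N$ and yields $\exp(G/N)\le e$ for some $\{n,q\}$-bounded number $e$ that \emph{does not depend on $N$}.

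Since the bound $e$ is uniform over all the quotients $G/N$, and these quotients have $G$ as their inverse limit, it follows that $\exp G$ is finite (it divides $\mathrm{lcm}(1,\dots,e)$); that is, $G$ is a profinite group of finite exponent. By Zelmanov's solution of the restricted Burnside problem \cite{Z1,Z0}, a profinite group of finite exponent is locally finite, so $G$ is locally finite, as required. I expect the main obstacle to be the first step: proving, and citing correctly, that a torsion profinite group has finite exponent, together with verifying that the passage to quotients genuinely preserves both coprimeness and the centralizer identity $C_{G/N}(a)=C_G(a)N/N$, so that the exponent bound furnished by Theorem~\ref{q2} is truly independent of $N$.
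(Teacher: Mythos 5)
Your reduction stands or falls on the very first step, and that step is a genuine gap. The Baire category argument gives only that some $C_m=\{x\in C_G(a):x^m=1\}$ has nonempty interior, i.e.\ that there is an \emph{open coset} $gH$ of an open subgroup $H\le C_G(a)$ on which the law $x^m\equiv 1$ holds. Since $C_m$ is merely a closed subset --- it is a subgroup only when $C_G(a)$ is abelian --- you cannot pass from this coset law to a bound on the exponent of $H$, let alone of $C_G(a)$: a product of two elements of order dividing $m$ need not have bounded order. In fact the assertion you need, that every torsion profinite group has finite exponent, is not a theorem but a well-known open problem: Zelmanov's theorem on compact torsion groups gives local finiteness, and Herfort's theorem gives finiteness of the set of relevant primes, but bounded exponent is unknown even for torsion pro-$p$ groups. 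If this assertion were available, the theorem in question really would collapse to the routine quotient argument you give (that part of your proposal --- the coprime centralizer formula $C_{G/N}(a)=C_G(a)N/N$, the uniform application of Theorem~\ref{q2}, and the passage from bounded exponent to local finiteness via the restricted Burnside problem --- is correct); the reason the literature does not argue this way is precisely that this uniformization is unavailable.

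Note also that the present paper does not prove this statement at all: it quotes it from \cite{eu}, and the proofs it does give for the analogous profinite results (Theorems~\ref{bb1} and~\ref{bb2} in Section~7) show what the Baire category output can legitimately be used for. There, for each $a\in A^{\#}$ one extracts an integer $n_a$, an open subgroup, and a coset law; one then collects the finite set $\pi$ of primes dividing the relevant indices, and passes to the $\pi'$-part (e.g.\ $O_{\pi'}(G^{(d)})$ inside a pronilpotent subgroup), where the coset law becomes a genuine law because the projection of the open coset onto the $\pi'$-component is surjective. Only on that part does the finite quantitative theorem apply through an inverse limit argument; the finitely many Sylow subgroups for $p\in\pi$ must then be handled separately by Lie-theoretic methods (Lemma~\ref{leWZ}, Theorem~\ref{Z1992}, Lazard's criterion and $p$-adic analytic groups). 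Your proposal bypasses all of this by assuming, in one line, a statement that is at least as hard as the theorem being proved.
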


In \cite{shusa}  the situation where the centralizers $C_{G}(a)$ consist of Engel elements was dealt with. If  $x,y$ are elements of a (possibly infinite) group $G$, the commutators $[x,_n y]$ are defined inductively by the rule
$$[x,_0 y]=x,\quad [x,_n y]=[[x,_{n-1} y],y]\quad \text{for all}\, n\geq 1.$$
An element $x$ is called a (left) Engel element if for any $g\in G$ there exists $n$, depending on $x$ and $g$, such that $[g,_n x]=1$.  A group $G$ is called Engel if all elements of $G$ are Engel. The element $x$ is called a (left) $n$-Engel element if for any $g\in G$ we have $[g,_n x]=1$. The group $G$ is $n$-Engel if all elements of $G$ are $n$-Engel. The following result was proved in \cite{shusa}.

\begin{theorem}\label{a1} 
Let $q$ be a prime, $n$ a positive integer and $A$ an elementary abelian group of order $q^2$. Suppose that $A$ acts coprimely on a finite group $G$ and assume that for each $a\in A^{\#}$ every element of $C_{G}(a)$ is $n$-Engel in $G$. Then the group $G$ is $k$-Engel for some $\{n,q\}$-bounded number $k$.
\end{theorem}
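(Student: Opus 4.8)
The plan is to reduce, in two preliminary steps, to a single finite $p$-group and then to settle the $p$-group case by Lie-ring methods. First I would prove that $G$ is nilpotent. Since $A$ is noncyclic abelian and acts coprimely, $G=\langle C_G(a): a\in A^{\#}\rangle$. Every element of each $C_G(a)$ is $n$-Engel, hence a left Engel element of $G$, and by Baer's theorem the left Engel elements of a finite group all lie in the Fitting subgroup; thus $C_G(a)\le F(G)$ for each $a$ and therefore $G=F(G)$ is nilpotent. A nilpotent $G$ is the direct product of its Sylow subgroups, each of which is characteristic, hence $A$-invariant, and the induced action of $A$ on a Sylow $p$-subgroup $P$ is again coprime; moreover an element of $C_P(a)=C_G(a)\cap P$ which is $n$-Engel in $G$ is $n$-Engel in $P$. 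As the class of $G$ is the largest of the classes of its Sylow subgroups, and a class bound gives a $k$-Engel bound, it suffices to bound the nilpotency class of such a $P$ by an $\{n,q\}$-bounded number independent of $p$.

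Next I would associate with $P$ the Lie ring $L=L(P)$ of its lower central series, a Lie algebra over $\mathbf{F}_p$ of the same nilpotency class as $P$ and carrying a coprime action of $A$. Two translations drive the argument. First, if $x\in P$ is $n$-Engel in $P$, then its homogeneous image in $L$ is ad-nilpotent of $n$-bounded index, because iterated Lie brackets are induced by group commutators and a derivation that is nilpotent on a generating set is nilpotent. Second, by coprimality each $C_L(a)$ is spanned by the images of $C_P(a)$, so it is generated by ad-nilpotent elements and consequently satisfies an $n'$-Engel identity with $n'=n'(n)$; in particular $C_L(a)$, and with it the subalgebra $C_L(A)=\bigcap_{a\in A^{\#}}C_L(a)$, satisfies a polynomial identity of $n$-bounded degree. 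The Bahturin--Zaicev theorem now propagates this identity from the fixed-point subalgebra to all of $L$: since $C_L(A)$ is PI and $A$ acts coprimely, $L$ satisfies a polynomial identity whose degree is $\{n,q\}$-bounded.

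Finally I would apply Zelmanov's nilpotency theorem to $L$. The algebra $L$ is generated by the homogeneous images of $\bigcup_{a\in A^{\#}} C_P(a)$ --- which in fact span $L_1$, since every linear character of the noncyclic group $A$ is non-faithful --- and all of these generators are ad-nilpotent of $n$-bounded index; combined with the polynomial identity produced by Bahturin--Zaicev, Zelmanov's criterion forces $L$ to be nilpotent. The main obstacle is to make the resulting class bound depend only on $n$ and $q$ and not on $|P|$: the generating set coming from $\bigcup_a C_P(a)$ is a priori of unbounded size, so the crux is to extract from Zelmanov's machinery a bound governed solely by the ad-nilpotency index and the PI-degree rather than by the number of generators. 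Securing this uniformity is where the real difficulty lies; once it is in place, the bounded class passes from $L$ to $P$ with the prime $p$ absent from the estimate, and assembling the Sylow subgroups yields that $G$ is $k$-Engel for some $\{n,q\}$-bounded $k$.
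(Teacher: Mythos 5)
Your two opening reductions are exactly the ones the actual proof uses (the paper quotes this theorem from \cite{shusa}, but it is the case $r=2$, $d=0$ of Theorem \ref{aa1}(2), proved in Section 5): Baer's theorem applied to $G=\langle C_G(a):a\in A^{\#}\rangle$ gives nilpotency of $G$, and one then works inside the $A$-invariant Sylow subgroups. The gap comes with the goal you then set yourself, namely an $\{n,q\}$-bounded bound on the nilpotency class of a whole Sylow subgroup $P$. That is not a difficulty to be overcome --- it is false. The trivial action of $A$ on any finite $n$-Engel $p$-group $P$ with $p\neq q$ is coprime and satisfies the hypotheses (each $C_P(a)=P$ consists of $n$-Engel elements), and for suitable fixed $n$ and $p$ there exist finite $n$-Engel $p$-groups of unbounded nilpotency class (e.g.\ arising from Razmyslov's non-nilpotent Engel Lie algebras in characteristic $p$); this is precisely why the theorem concludes ``$k$-Engel'' rather than ``bounded class''. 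Accordingly, the uniformity you hope to extract from Zelmanov's machinery --- a class bound independent of the number of generators --- does not exist: the dependence on $m$ in Theorem \ref{Z1} is unavoidable. What the real proof does instead is verify the Engel law two elements at a time: given $x,y\in P$, it invokes the decomposition $P=P_1\cdots P_t$ with $t$ a $\{q,r\}$-bounded number and each $P_i$ contained in a centralizer (Theorem \ref{teoPS-CA2} with Proposition \ref{propPS-CA2}(2); for $r=2$ this is $P=\prod_i C_P(A_i)$), writes $x=x_1\cdots x_t$ and $y=y_1\cdots y_t$ with $x_i,y_i\in P_i$, and passes to the $A$-invariant subgroup $Y$ generated by the boundedly many orbits $x_i^A,y_i^A$. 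The Lie-theoretic results are applied to $Y$, whose associated Lie ring has a bounded number of generators, yielding a bounded class for $Y\geq\langle x,y\rangle$ and hence the $k$-Engel law for $P$, with no claim whatsoever about the class of $P$.

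Even for such a boundedly generated $Y$, your Lie-theoretic step has two further holes that the paper spends most of Proposition \ref{propaa1} repairing. First, Theorems \ref{Z1992} and \ref{Z1} require every \emph{commutator} in the chosen generators to be ad-nilpotent, not merely the generators; ad-nilpotence is not preserved by Lie brackets, and a bracket of an element coming from $C_P(a)$ with one coming from $C_P(b)$ lies in no centralizer a priori. This is exactly what the ``special subspaces'' and the passage to $\overline{L}=L\otimes\mathbb{Z}[\omega]$ with generators that are common eigenvectors for $A$ are designed to fix: after extending scalars, every commutator in the eigenvector generators falls back into the scalar extension of a special subspace, hence into (the image of) a centralizer, and Theorem \ref{Zelring} together with Lemma \ref{lemmanovo} converts this into an ad-nilpotency index bounded in terms of $n$ and $q$. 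Second, you treat $L(P)$ as a Lie algebra over $\mathbb{F}_p$; it is only a Lie ring, since the lower central factors need not have exponent $p$, so Theorems \ref{blz} and \ref{Z1992} do not apply to it directly. The Lie-ring substitute (Theorem \ref{BazaRing}) yields only $e\gamma_c(L)=0$ for bounded $e,c$, which says nothing when $p\mid e$; that case forces the separate argument with $L_p(Y)$, Theorem \ref{finiteL}, and the Lubotzky--Mann theory of powerful $p$-groups that occupies the second half of the proof of Theorem \ref{aa1}(2). In short, you have named the right ingredients (Baer, ad-nilpotence, Bahturin--Zaicev, Zelmanov), but the step you defer as ``where the real difficulty lies'' cannot be closed in the form you state it; it is replaced in the actual proof by the bounded-generation reduction and the eigenvector/powerful-$p$-group apparatus.
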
 
A profinite (non-quantitative) version of the above theorem was established in the recent work \cite{CPD}.

\begin{theorem}\label{b1}  
Let $q$ be a prime and $A$ an elementary abelian group of order  $q^2$. Suppose that $A$ acts coprimely on a profinite group $G$ and assume that all elements in $C_{G}(a)$ are Engel in $G$ for each $a\in A^{\#}$. Then $G$ is locally nilpotent.
\end{theorem}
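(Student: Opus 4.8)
The plan is to establish local nilpotency through the Engel condition, the bridge being the Wilson--Zelmanov theorem: a profinite group all of whose elements are (left) Engel is locally nilpotent. I would first reduce to finitely generated sections. Throughout, the standard coprime-action facts are available: since $(|A|,|G|)=1$, the $A$-invariant open normal subgroups $N$ are cofinal and satisfy $C_{G/N}(a)=C_G(a)N/N$, one has $G=\langle C_G(a)\mid a\in A^{\#}\rangle$, and every finite subset of $G$ lies in an $A$-invariant, topologically finitely generated closed subgroup $H=\overline{\langle S^{A}\rangle}$. As $C_H(a)\le C_G(a)$, the hypothesis is inherited by each such $H$, and any topologically finitely generated closed subgroup sits inside one of them; hence it suffices to prove that every $A$-invariant, topologically finitely generated $H$ is nilpotent, which by Wilson--Zelmanov (since $H$ is finitely generated) will follow once $H$ is shown to be Engel.

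The first genuine step is to observe that each centralizer $C_G(a)$ is locally nilpotent. By hypothesis every element of $C_G(a)$ is Engel in $G$, hence \emph{a fortiori} Engel inside the closed subgroup $C_G(a)$; thus $C_G(a)$ is a profinite Engel group, and the Wilson--Zelmanov theorem gives that it is locally nilpotent. This pins down the locally nilpotent building blocks that topologically generate $G$, and it is these that the coprime action of $A$ must be made to knit together.

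The core of the proof is to upgrade ``the elements of each $C_G(a)$ are Engel in $G$'' to ``$G$ is Engel,'' and this is where the finite quantitative result Theorem~\ref{a1} enters. Having reduced to an $A$-invariant, topologically finitely generated $G$, I would pass to the finite quotients $G/N$: each inherits the hypotheses, and in a finite group a left Engel element is $n$-Engel for some $n$, so Theorem~\ref{a1} makes every $G/N$ a $k$-Engel group. The main obstacle is precisely that the bound $k=k(N)$ is controlled only by the Engel degree of the centralizer elements in $G/N$, which is a priori unbounded as $N$ shrinks, so a plain inverse limit yields no single Engel degree for $G$. To defeat this non-uniformity I would exploit the compactness of $G$: for fixed $x\in C_G(a)$ the sets $\{g\in G:[g,_{m}x]=1\}$ are closed, increase with $m$, and cover $G$, so by the Baire category theorem one of them has nonempty interior, which promotes the pointwise Engel condition to a bounded one on an open $A$-invariant section. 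This uniform condition then feeds Theorem~\ref{a1}---or, passing to the pro-$p$ Sylow subgroups and the associated $\mathbb{F}_p$-Lie algebra $L$, makes the fixed subalgebras $C_L(a)$ satisfy a genuine Engel identity, whereupon the theorem of Bahturin and Zaicev \cite{BZ} renders $L$ a PI-algebra and Zelmanov's theorems \cite{Z1,Z0} force the finitely generated $L$ to be nilpotent. Either route returns a uniform $k$-Engel bound for the finitely generated section, so that section is nilpotent and $G$ is locally nilpotent. I expect essentially all the difficulty to lie here: converting the merely pointwise Engel hypothesis into a uniform one while preserving coprimeness and $A$-invariance, and then controlling the associated Lie algebra precisely enough to invoke \cite{BZ} and \cite{Z1,Z0}.
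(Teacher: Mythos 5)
Your skeleton---reduction to $A$-invariant topologically finitely generated subgroups, Wilson--Zelmanov as the bridge to local nilpotency, the finite Theorem \ref{a1} applied to finite quotients, and Baire category to attack non-uniformity---is the same as the one used in this paper (which in fact quotes Theorem \ref{b1} from \cite{CPD} and proves the generalizations, Theorems \ref{bb1} and \ref{bb2}, by exactly this strategy in Section 7). But the execution has genuine gaps. The first is in your Baire step: you apply Baire category with $x$ \emph{fixed}, covering $G$ by the closed sets $\{g\in G:[g,_m x]=1\}$. That yields, for each individual $x\in C_G(a)$, an integer $m(x)$ and an open coset on which $[\,\cdot\,,_{m(x)}x]=1$; it gives no single $n$ valid for all $x\in C_G(a)$, which is precisely what the hypothesis of Theorem \ref{a1} demands. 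The correct move (see the proof of Theorem \ref{bb1}(2) and Lemma \ref{lpi}) is to apply Baire to the closed covering $S_i=\{(g,x)\in G\times C_G(a):[g,_i x]=1\}$ of the \emph{product} space, which produces one $n$, one open subgroup $K$, and elements $u,v$ such that the $n$-Engel law holds on the cosets $uK$ and $v(K\cap C_G(a))$. Even then you only have a law on cosets, not on a subgroup: converting it into ``every element of $C_T(a)$ is $n$-Engel in $T$'' for an honest subgroup $T$ requires first proving $G$ is pronilpotent (finite quotients are nilpotent by Theorem \ref{a1} plus Zorn's theorem---you assemble these ingredients but never draw this conclusion), writing $G$ as the Cartesian product of its Sylow subgroups, and taking $T=O_{\pi_1'}(G)$ where $\pi_1$ is the set of primes dividing $[G:K]$; the direct-factor decomposition is what absorbs the coset representatives $u,v$. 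Your phrase ``bounded on an open $A$-invariant section'' conceals all of this.

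The second gap is that your closing ``either route'' mis-assembles the two methods: they are not interchangeable alternatives but handle complementary pieces. The uniform-degree route feeds Theorem \ref{a1} only on $R=O_{\pi'}(G)$ with $\pi=\bigcup_{a\in A^{\#}}\pi_a$ a \emph{finite} set of primes, because that is the only place the Baire/pronilpotency step delivers a uniform hypothesis; the finitely many Sylow subgroups for $p\in\pi$ are exactly where uniformity fails and must be treated by the Lie method. Neither route alone suffices, and treating every prime separately by the Lie method cannot replace the uniform step, since local nilpotency does not pass to a Cartesian product over infinitely many primes. Moreover your Lie route is missing both its key hypothesis and its endgame. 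Zelmanov's Theorem \ref{Z1992} requires the algebra to be generated by elements \emph{every commutator in which is ad-nilpotent}; PI plus finite generation alone gives nothing. Securing ad-nilpotency of commutators needs Lemma \ref{leWZ} together with the extension of scalars to $\mathbb{F}_p[\omega]$, generation by common eigenvectors for $A$ (so that each commutator falls into the centralizer of some $a\in A^{\#}$), and the nilpotent-subring argument of Lemma \ref{lemmanovo}/Theorem \ref{Zelring}. Finally, nilpotency of $L_p(Y)$ does not ``return a uniform $k$-Engel bound'': what it gives, via Lazard \cite{L}, is that $Y$ is $p$-adic analytic, hence linear over the $p$-adics, and it is Gruenberg's theorem \cite{G} on Engel elements of linear groups, applied to the finitely many Engel generators of $Y$, that finally makes $Y$ nilpotent.
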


A very deep theorem of Wilson and Zelmanov \cite[Theorem 5]{WZ} tells us that a profinite group is locally nilpotent if and only if it is Engel. Thus, there is a clear relation between Theorem \ref{a1} and Theorem \ref{b1}.

If, in Theorem \ref{a1}, we relax the hypothesis that every element of $C_{G}(a)$ is $n$-Engel in $G$ and require instead that every element of $C_{G}(a)$ is $n$-Engel in $C_{G}(a)$, we quickly see that the result is no longer true. An example of a finite non-nilpotent group $G$ admitting  an action of a noncyclic group $A$ of
order four  such that $C_G(a)$ is abelian for each $a\in A^{\#}$ can be found for instance in \cite{PS-CA3}. On the other hand, another result, that was established in \cite{CPD}, is the following.

\begin{theorem}\label{a2} Let $q$ be a prime, $n$ a positive integer and $A$ an elementary abelian group of order $q^3$. Suppose that $A$ acts coprimely on a finite group $G$ and assume that for each $a\in A^{\#}$ every element of $C_{G}(a)$ is $n$-Engel in $C_{G}(a)$. Then the group $G$ is $k$-Engel for some $\{n,q\}$-bounded number $k$.
\end{theorem}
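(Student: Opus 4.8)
The plan is to prove Theorem \ref{a2} by reducing the $q^3$ situation to the already-established $q^2$ result, Theorem \ref{a1}. Let me sketch the approach.

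First I would exploit the standard coprime action machinery. Since $A$ is elementary abelian of order $q^3$, every maximal subgroup has order $q^2$, and there are $(q^3-1)/(q-1)$ of them. The key structural tool is the coprime generation lemma: if $A$ acts coprimely on $G$ with $A$ noncyclic, then $G = \langle C_G(B) : B \leq A,\ A/B \text{ cyclic}\rangle$, and more refined versions express subgroups of $G$ in terms of centralizers $C_G(a)$ for $a \in A^\#$. In particular, for each maximal subgroup $M < A$ (so $|M| = q^2$ and $A/M$ is cyclic), $M$ acts coprimely on $G$ and one has $G = \langle C_G(a) : a \in M^\# \rangle$ whenever $M$ itself is noncyclic. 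The idea is that the hypothesis "every element of $C_G(a)$ is $n$-Engel in $C_G(a)$" for $a \in A^\#$ should be upgraded, using the action of the larger group $A$, to the hypothesis "every element of $C_G(a)$ is $k$-Engel in $G$" needed to apply Theorem \ref{a1} to the maximal subgroups $M$.

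The crucial intermediate step I would aim for is the following: show that for each $a \in A^\#$, every element of $C_G(a)$ is $k_0$-Engel in $G$ for some $\{n,q\}$-bounded $k_0$. To get this, fix $a \in A^\#$ and $x \in C_G(a)$; I want to bound the Engel degree of $x$ against arbitrary $g \in G$. Since $\langle a \rangle$ has order $q$, choose a complement so that $A = \langle a \rangle \times B$ with $|B| = q^2$, and note that $C_G(a)$ is itself acted on coprimely by $B$ with $C_{C_G(a)}(b) = C_G(\langle a,b\rangle) = C_G(a) \cap C_G(b)$ for $b \in B^\#$. Each such centralizer $C_G(a)\cap C_G(b)$ equals $C_{C_G(b)}(a)$ and sits inside $C_G(b)$, whose elements are $n$-Engel in $C_G(b)$ by hypothesis — and here the presence of the two-generator subgroup $\langle a,b\rangle$ is what lets one promote "$n$-Engel in the centralizer" to "$k$-Engel in $G$." Concretely, I would apply Theorem \ref{a1} to the action of $B$ (order $q^2$) on the group $C_G(a)$, after first checking that $C_{C_G(a)}(b) = C_G(a) \cap C_G(b)$ consists of elements that are $n$-Engel in $C_G(a)$; this should follow because elements of $C_G(a)\cap C_G(b)$ are $n$-Engel in $C_G(b)$, and a further coprime-action/centralizer-decomposition argument transfers this to being Engel in $C_G(a)$. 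Theorem \ref{a1} then yields that $C_G(a)$ is $k_1$-Engel for an $\{n,q\}$-bounded $k_1$.

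Once every $C_G(a)$ is $k_1$-Engel (with $\{n,q\}$-bounded $k_1$), every element of $C_G(a)$ is in particular $k_1$-Engel in $C_G(a)$; but to feed Theorem \ref{a1} at the top level I need these elements to be $k$-Engel in $G$, so I would run a second transfer argument, now using a maximal subgroup $M$ of order $q^2$ and the remaining factor, to promote "Engel in $C_G(a)$" to "Engel in $G$," and then apply Theorem \ref{a1} to the action of $M$ on $G$ to conclude $G$ is $k$-Engel. The main obstacle I anticipate is precisely this promotion of the Engel condition from the fixed-point subgroup to the whole group: the Engel degree of $x$ against a general $g \in G$ must be controlled uniformly, and controlling it requires writing $g$ (or rather the relevant commutator subgroup) as a product of boundedly many pieces lying in various centralizers $C_G(b)$ via coprime generation, then using a Hall–Higman / Lie-theoretic length bound together with the quantitative Engel bound from Theorem \ref{a1} so that the number of commutators and the resulting Engel degree both stay $\{n,q\}$-bounded. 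Keeping every parameter bounded simultaneously — rather than merely finite — is where the real work lies, and it is what forces the use of the order $q^3$ (giving enough maximal subgroups and enough independent two-generator directions) rather than $q^2$.
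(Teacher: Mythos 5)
Your reduction has a genuine gap, and it sits exactly where you locate the ``real work.'' The first half of your argument is circular: to apply Theorem \ref{a1} to the action of $B$ (order $q^2$) on $C_G(a)$ you need every element of $C_{C_G(a)}(b)=C_G(a)\cap C_G(b)$ to be $n$-Engel in $C_G(a)$ --- but that is immediate from the hypothesis of Theorem \ref{a2}, since these elements lie in $C_G(a)$ and every element of $C_G(a)$ is assumed $n$-Engel in $C_G(a)$. The conclusion you extract, that $C_G(a)$ is $k_1$-Engel, is therefore weaker than what you started with ($C_G(a)$ is $n$-Engel by hypothesis), and it says nothing about how elements of $C_G(a)$ behave against elements of $G$ outside $C_G(a)$. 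So the entire weight of the proof falls on your ``second transfer'' --- promoting ``$n$-Engel in $C_G(a)$'' to ``$k$-Engel in $G$'' --- for which you name techniques (coprime generation, writing $g$ as a bounded product of pieces lying in centralizers, Hall--Higman-type bounds) but give no argument. That promotion cannot be carried out by any argument of the kind you describe, i.e., one using only the data visible to a rank-two subgroup of $A$: the paper points to a finite non-nilpotent $G$ admitting a noncyclic $A$ of order $4$ with every $C_G(a)$ abelian, hence $1$-Engel in itself \cite{PS-CA3}, so ``Engel in the centralizer'' hypotheses over a group of order $q^2$ simply do not globalize. A further concrete obstruction to your mechanism: the Engel condition is not subadditive at the group level, so writing $g=g_1\cdots g_s$ with $g_i\in C_G(c_i)$ and knowing $[g_i,{}_n\,x]=1$ for each $i$ gives no bound whatsoever on $[g,{}_m\,x]$.

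What actually makes rank $3$ work (this is the route of \cite{CPD}, reproduced in generalized form in the paper's proof of Theorem \ref{aa2}, whose case $d=0$, $r=3$ is exactly Theorem \ref{a2}) is different in kind: one first gets nilpotency of each $C_G(a)$ (Zorn's theorem) and then of $G$ itself via the rank-three nilpotency theorem of \cite{PS-CA2}, reduces to a Sylow $p$-subgroup written as a bounded product $P=P_1\cdots P_t$ of pieces lying in centralizers, and then passes to an associated Lie ring $L$, where --- unlike in the group --- the decomposition $L=\sum_j C_L(A_j)$ over the maximal subgroups $A_j$ of $A$ is additive. The order $q^3$ enters through the arithmetic $|A/B|\le q$ and $|A/A_j|=q$, which forces $B\cap A_j\neq 1$, so that each piece $P_i$ and each centralizer $C_G(A_j)$ lie inside a common $C_G(c)$ with $c\in A^{\#}$; the hypothesis then yields $[C_L(A_j),{}_n\,x]=0$ for every $j$ and every $x$ coming from $P_i$, and summing over $j$ gives ad-nilpotency of $x$ on all of $L$. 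Only after this linearization do the Zelmanov and Bahturin--Zaicev--Linchenko theorems produce the bounded nilpotency class. This Lie-theoretic globalization step has no counterpart in your proposal, and without it the reduction to Theorem \ref{a1} does not close.
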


In \cite{PS-CA3} a profinite (non-quantitative) version of Theorem \ref{a2} was obtained. The statement is as follows.

 \begin{theorem}\label{b2} Let $q$ be a prime and $A$ an elementary abelian group of order  $q^3$. Suppose that $A$ acts coprimely on a profinite group $G$ and assume that $C_{G}(a)$ is locally nilpotent for each $a\in A^{\#}$. Then $G$ is locally nilpotent.
\end{theorem}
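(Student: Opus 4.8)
The plan is to prove local nilpotency qualitatively, through the Wilson--Zelmanov machinery, rather than by a naive inverse limit onto the quantitative statement: the hypothesis only gives \emph{local} nilpotency of the centralizers, so in a finite quotient $G/N$ one obtains that $C_{G/N}(a)$ is nilpotent but of unbounded class, and there is no uniform $n$ for which $C_{G/N}(a)$ is $n$-Engel. Hence Theorem \ref{a2} cannot be fed directly into a compactness argument, and the proof must rest on the purely qualitative tools. First I would invoke the Wilson--Zelmanov theorem \cite[Theorem 5]{WZ}, by which $G$ is locally nilpotent if and only if it is Engel; I shall in fact establish local nilpotency by showing that every finitely generated subgroup is nilpotent. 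Since the $A$-orbit of a finite set is finite, each finitely generated subgroup lies in a topologically finitely generated $A$-invariant subgroup $H$, and as subgroups of a locally nilpotent group are locally nilpotent, $C_H(a)=H\cap C_G(a)$ inherits the hypothesis. Thus I may assume that $G$ is topologically finitely generated and aim to prove it nilpotent.

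Next I would reduce to a single prime. The coprime generation lemma gives $G=\langle C_G(M)\mid M\ \text{maximal in}\ A\rangle$, where each maximal $M$ has rank $2$ and $C_G(M)=\bigcap_{m\in M^{\#}}C_G(m)$ is locally nilpotent, hence locally soluble; together with the standard results on coprime actions with (locally) soluble fixed points this yields that $G$ is locally soluble. With solubility in hand, $A$-invariant Sylow and Hall theory applies: for every prime $p$ there is an $A$-invariant Sylow pro-$p$ subgroup $P$ with $C_P(a)=P\cap C_G(a)$ locally nilpotent, and the problem is reduced to proving that each such $P$ is nilpotent and that the Sylow subgroups commute, that is, to the pro-$p$ case together with a pronilpotency argument.

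The heart of the proof is the pro-$p$ case, handled via the associated Lie algebra. To the finitely generated pro-$p$ group $P$ I would attach the graded $\mathbb{F}_p$-Lie algebra $L=L_p(P)$ coming from the Zassenhaus filtration; the action of $A$ extends to $L$, and by coprimality the fixed-point subalgebra $C_L(a)$ is the Lie algebra associated with $C_P(a)$. The local nilpotency of each $C_G(a)$ should translate into an Engel condition on the subalgebras $C_L(a)$, and at this point the Lie-theoretic analogue takes over: the Bahturin--Zaicev theorem \cite{BZ} on algebras whose fixed-point subalgebra under a finite automorphism group is PI, combined with Zelmanov's theorem that a finitely generated Engel Lie algebra is nilpotent \cite{Z1,Z0}, shows that $L$ is nilpotent. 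By the usual correspondence this forces $P$ to be nilpotent, and reassembling the Sylow subgroups completes the argument.

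I expect the main obstacle to be the faithful translation of ``$C_G(a)$ locally nilpotent'' into a usable Engel identity on the subalgebras $C_L(a)$, together with the verification that the rank~$3$ hypothesis supplies exactly the fixed-point information needed to drive the Lie-algebra argument; this is precisely the point at which the rank~$2$ version breaks down, as the example in \cite{PS-CA3} shows. A secondary difficulty is the recombination step: because the nilpotency class of the finite quotients is unbounded, the passage from the centralizer condition to nilpotency of $G$ is inherently non-quantitative, so the qualitative Wilson--Zelmanov and Zelmanov theorems must be used throughout in place of Theorem \ref{a2}.
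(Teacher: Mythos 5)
First, a point of orientation: this paper never actually proves Theorem \ref{b2}; it is quoted as background from \cite{PS-CA3}, and the argument written out here is the proof of its generalization, Theorem \ref{bb2}, in Section 7 (Theorem \ref{b2} is the case $d=0$, $r=3$, after translating local nilpotency into the Engel condition via \cite[Theorem 5]{WZ}). Your outline shares the broad architecture of that proof --- reduction to finitely generated $A$-invariant subgroups and then to pro-$p$ subgroups, the algebra $L_p$, Bahturin--Zaicev/Linchenko plus Zelmanov, Lazard --- but at the load-bearing points it is either mistaken or leaves the decisive mechanism unsupplied. Your opening move, discarding Theorem \ref{a2}, is the first misstep. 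No uniform Engel bound exists, but none is needed: in each finite quotient $Q=G/N$ (with $N$ open, normal and $A$-invariant) the group $C_Q(a)=C_G(a)N/N$ is finite and nilpotent, hence $n_Q$-Engel for some $n_Q$ depending on $Q$, so Theorem \ref{a2} together with Zorn's theorem makes $Q$ nilpotent, quotient by quotient; consequently $G$ is pronilpotent and is the Cartesian product of its Sylow subgroups. Your substitute --- local solubility from unnamed ``standard results on coprime actions with soluble fixed points,'' then $A$-invariant Sylow theory --- does not deliver this: prosolubility gives $A$-invariant Sylow subgroups and their conjugacy but no product decomposition, and you concede you still need ``a pronilpotency argument'' without giving one. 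Worse, even granting pronilpotency, your stated goal (each Sylow subgroup nilpotent, Sylow subgroups commuting) is not sufficient for local nilpotency: a Cartesian product of nilpotent pro-$p$ groups of unbounded nilpotency classes need not be locally nilpotent (the diagonal in a product of free nilpotent pro-$p$ groups of rank $2$ and unbounded classes generates a free abstract subgroup). This is why the proofs here and in \cite{PS-CA3} run a Baire-category argument on each $C_G(a)$ to find an open subgroup and cosets on which a fixed law $[x,_n y]\equiv 1$ holds, invoke Burns--Medvedev \cite{BM}, and thereby split the group into finitely many Sylow subgroups for a finite exceptional set of primes $\pi$ plus a $\pi'$-part satisfying a uniform Engel condition; your sketch has no counterpart of this step.

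In the pro-$p$ case both Lie-theoretic inputs are also missing. ``Locally nilpotent'' is not an identity, so it does not pass to $C_L(a)$ by functoriality; the paper's Lemma \ref{lpi} must combine the Baire category theorem with \cite[Theorem 1]{WZ} to produce a polynomial identity for the fixed-point subalgebra before Theorem \ref{blz} can be applied. More seriously, $L$ itself is never known to be Engel, so ``Zelmanov's theorem that a finitely generated Engel Lie algebra is nilpotent'' has no purchase; what is needed is Theorem \ref{Z1992}, whose hypothesis is a generating set all of whose commutators are ad-nilpotent. Because elements of $C_G(a)$ are Engel only inside $C_G(a)$, not in $G$, Lemma \ref{leWZ} cannot be applied directly; ad-nilpotency is obtained only through the $A$-special subgroup machinery: the decomposition $P=P_1\cdots P_t$ with each $P_i$ inside some $C_G(a)$ (profinite versions of Theorem \ref{teoPS-CA2} and Proposition \ref{propPS-CA2}), the rank count --- this is exactly where $r\geq 3$ enters --- showing that the relevant subgroups $B\leq A$ meet every maximal subgroup $A_j$ nontrivially, and the resulting identity $L=\sum_j C_L(A_j)$, on which ad-nilpotency follows by linearity of the adjoint map. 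You correctly flag precisely this as ``the main obstacle,'' but flagging it is not overcoming it. Finally, your closing claim that nilpotency of $L$ ``forces $P$ to be nilpotent by the usual correspondence'' is false: by Lazard \cite{L}, nilpotency of $L_p(P)$ is equivalent to $P$ being $p$-adic analytic, and analytic pro-$p$ groups (open subgroups of $SL_2(\mathbb{Z}_p)$, say) need not be even locally nilpotent. The proof must continue through analyticity: linearity over the $p$-adic numbers plus Gruenberg's theorem \cite{G}, or finite rank via Lubotzky--Mann \cite{luma} together with \cite[Theorem 5]{WZ} and the finite-group theorem of \cite{PS-CA2}, before nilpotency of the finitely generated subgroups is actually reached.
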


The relation, noted above between  Theorem \ref{a1} and Theorem \ref{b1}, naturally extends to  Theorems \ref{a2} and \ref{b2}.

Let us denote by $\gamma_i(H)$ the $i$th term of the lower central series of a group $H$ and  by $H^{(i)}$ the $i$th term of the derived series of $H$. It was shown in \cite{PS-Gu} and further in \cite{PS-CA, PS-CA2} that if the rank of the acting group $A$ is big enough, then results of similar nature to that  of Theorem \ref{q2} can be obtained while imposing conditions on elements of $\gamma_i(C_G(a))$ or ${C_G(a)}^{(i)}$ rather than  on elements of $C_G(a)$. In the same spirit, one of the goals of the present article is to extend  Theorems \ref{a1} and \ref{a2}  respectively  as follows. 

\begin{theorem}\label{aa1} Let $q$ be a prime, $n$ a positive integer and $A$ an elementary abelian group of order $q^r$ with $r\geq2$ acting on a finite $q'$-group $G$.

$(1)$ If all elements in $\gamma_{r-1}(C_G(a))$ are $n$-Engel in $G$ for any $a\in A^\#$, then $\gamma_{r-1}(G)$ is $k$-Engel for some $\{n,q,r\}$-bounded number $k$.

$(2)$ If, for some integer $d$ such that $2^d\leq r-1$, all elements in the $d$th derived group of $C_G(a)$ are $n$-Engel in $G$ for any $a\in A^\#$, then the $d$th derived group $G^{(d)}$ is $k$-Engel for some $\{n,q,r\}$-bounded number $k$.
\end{theorem}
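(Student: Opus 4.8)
The plan is to reduce the statement to a problem about associated Lie rings and then invoke Zelmanov's nilpotency criterion, in the spirit of the proof of Theorem \ref{a1} but carrying the combinatorics of higher rank. The backbone is a pair of generation lemmas of Shumyatsky type: for a coprime action of an elementary abelian group of rank $r$ one has $\gamma_{r-1}(G)=\langle \gamma_{r-1}(C_G(a)) : a\in A^\#\rangle$, and, whenever $2^d\le r-1$, $G^{(d)}=\langle C_G(a)^{(d)} : a\in A^\#\rangle$. These decompositions are exactly what forces the rank hypotheses: expressing a weight-$c$ term of the lower central series through centralizers of single elements requires rank at least $c+1$ (here $c=r-1$), while a term of derived length $d$ behaves like weight $2^d$. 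I would first isolate and prove, or quote from \cite{PS-Gu, PS-CA}, these two lemmas, so that $\gamma_{r-1}(G)$ (respectively $G^{(d)}$) is generated by subgroups all of whose elements are, by hypothesis, $n$-Engel in $G$.

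The next step is to pass from ``generated by $n$-Engel elements'' to genuine nilpotency, which does not follow formally. I would apply Baer's theorem, by which the left Engel elements of a finite group lie in the Fitting subgroup: each $\gamma_{r-1}(C_G(a))$ (respectively $C_G(a)^{(d)}$) is then contained in $F(G)$, so the generation lemma gives $\gamma_{r-1}(G)\le F(G)$ (respectively $G^{(d)}\le F(G)$). Thus the relevant subgroup is nilpotent and it only remains to bound its Engel degree. Since a nilpotent group is the direct product of its Sylow subgroups and the $A$-action together with all the hypotheses restrict to each $A$-invariant Sylow subgroup, the problem reduces to a finite $p$-group $P$ carrying a coprime action of $A$, with the crucial extra demand that the bound on the Engel degree be uniform in $p$.

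For such $P$ I would form the associated Lie ring $L=\bigoplus_i \gamma_i(P)/\gamma_{i+1}(P)$ and lift the action of $A$ to $L$; the homogeneous pieces $\gamma_{r-1}(C_L(a))$ and $C_L(a)^{(d)}$ inherit an ad-nilpotency (Engel) condition of $\{n\}$-bounded degree from the group hypothesis, and the graded analogues of the generation lemmas yield $\gamma_{r-1}(L)=\sum_a \gamma_{r-1}(C_L(a))$ and $L^{(d)}=\sum_a C_L(a)^{(d)}$. Here the Bahturin--Zaicev theorem \cite{BZ} on Lie algebras admitting a coprime group of automorphisms with PI fixed-point behaviour, combined with Zelmanov's theorems \cite{Z1,Z0}, gives that $\gamma_{r-1}(L)$ (respectively $L^{(d)}$) is nilpotent of $\{n,q,r\}$-bounded class, with a bound independent of $p$. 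Transferring this class back through the standard correspondence between $P$ and $L$, in the manner of the Lubotzky--Mann theory \cite{luma}, bounds the Engel degree of $P$ uniformly, and gluing over the primes produces the required $\{n,q,r\}$-bounded $k$ with $\gamma_{r-1}(G)$ (respectively $G^{(d)}$) being $k$-Engel.

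The hardest point is this Lie-theoretic core: extracting uniform, prime-independent nilpotency of $\gamma_{r-1}(L)$ from an Engel condition imposed only on the centralizer pieces $\gamma_{r-1}(C_L(a))$. The decomposition $\gamma_{r-1}(L)=\sum_a\gamma_{r-1}(C_L(a))$ must be fed into Zelmanov's machinery with care, verifying that the resulting polynomial identity and Engel degree depend only on $n$, $q$ and $r$ and not on the ambient prime. The derived-series variant in part $(2)$ runs in parallel, the only additional work being the bookkeeping of the weight $2^d$ and the constraint $2^d\le r-1$ throughout the generation and decomposition lemmas.
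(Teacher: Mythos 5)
Your skeleton does coincide with the paper's: generation of $\gamma_{r-1}(G)$ (resp.\ $G^{(d)}$) by centralizer pieces, Baer's theorem to get nilpotency, reduction to Sylow $p$-subgroups, and a Lie-theoretic core via Bahturin--Zaicev plus Zelmanov. But the core, as you describe it, has a genuine gap exactly at the point you defer. Zelmanov's machinery (Theorems \ref{Z1}, \ref{Z1992}, \ref{Zelring}) requires a generating set such that \emph{every commutator in the generators} is ad-nilpotent of bounded index; a decomposition $L^{(d)}=\sum_{a}C_L(a)^{(d)}$ into subspaces consisting of ad-nilpotent elements gives no such thing, because a bracket $[u,v]$ with $u\in C_L(a)^{(d)}$, $v\in C_L(b)^{(d)}$, $a\neq b$, need not lie in any centralizer and so inherits no ad-nilpotency at all. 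You name this ``the hardest point'' but supply no mechanism for it, and without one the argument stops here. The paper's mechanism is precisely the content of Proposition \ref{propaa1}: the $A$-special subgroups of degree $d$ (which are closed, up to intersecting with $C_G(A_i)$, under taking commutators), their Lie-ring shadows (``special subspaces''), the extension of scalars $\overline L=L\otimes\mathbb{Z}[\omega]$ with $\omega$ a primitive $q$th root of unity, and the choice of generators of $\overline L$ that are common eigenvectors for $A$. Only because a commutator of eigenvectors is again an eigenvector does every commutator in the generators land in $C_{\overline L}(A_l)$ for some maximal $A_l$, hence in $\overline W$ for a special subspace $W$, where ad-nilpotency of bounded index is then extracted via Theorem \ref{Zelring} and Lemma \ref{lemmanovo}. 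This eigenvector-plus-special-subspace device is the missing idea, not a routine verification.

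The second gap is prime-uniformity. Over Lie rings (which you need, since the bound must not depend on $p$) Zelmanov's nilpotency theorem is false; the correct substitute, Theorem \ref{BazaRing}, yields bounded integers $e,c$ with $e\gamma_c(L)=0$, \emph{not} bounded nilpotency class. So your claim that $\gamma_{r-1}(L(P))$ (resp.\ $L(P)^{(d)}$) is nilpotent of $\{n,q,r\}$-bounded class for the whole Sylow subgroup $P$ cannot be established -- indeed it would transfer back to give $P$ nilpotent of bounded class, which is strictly stronger than the theorem's conclusion ($k$-Engel) and false in general; moreover all the quantitative Lie theorems bound the class in terms of the number of generators, which for $P$ is unbounded. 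The paper therefore argues pointwise: given $x,y\in P$, it uses $P=P_1\cdots P_t$ with $t$ bounded (Theorem \ref{teoPS-CA2}) to embed $x,y$ in a subgroup $Y=\langle x_i^A,y_i^A\rangle$ with a bounded number of generators, applies Proposition \ref{propaa1} to get $e\gamma_c(L(Y))=0$, and then splits into cases: if $p\nmid e$ then $Y$ has class at most $c-1$; if $p\mid e$ it invokes nilpotency of $L_p(Y)$ of bounded class, Theorem \ref{finiteL} to produce a powerful characteristic subgroup of bounded index and rank, an argument bounding the derived length of $Y$, and finally \cite[Lemma 4.1]{shusa} to bound the class. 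Your proposal collapses this two-case analysis into ``transferring the class back through Lubotzky--Mann,'' which is where it would break down.
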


\begin{theorem}\label{aa2} Let $q$ be a prime, $n$ a positive integer and $A$ an elementary abelian group of order $q^r$ with $r\geq3$ acting on a finite $q'$-group $G$.

$(1)$ If all elements in $\gamma_{r-2}(C_G(a))$ are $n$-Engel in $C_G(a)$ for any $a\in A^\#$, then $\gamma_{r-2}(G)$ is $k$-Engel for some $\{n,q,r\}$-bounded number $k$.

$(2)$ If, for some integer $d$ such that $2^d\leq r-2$, all elements in the $d$th derived group of $C_G(a)$ are $n$-Engel in $C_G(a)$ for any $a\in A^\#,$ then the $d$th derived group $G^{(d)}$ is $k$-Engel for some $\{n,q,r\}$-bounded number $k$.
\end{theorem}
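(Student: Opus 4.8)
The plan is to prove Theorem~\ref{aa2} by reducing it to the already-established Theorem~\ref{a2} through an inductive argument on the rank $r$, exploiting the rich supply of subgroups of the elementary abelian group $A$.

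**Setup and strategy.** First I would fix the coprime action of $A$ on the finite $q'$-group $G$ and recall the standard coprime-action toolkit: for any $A$-invariant subgroup $N$, the centralizer in the quotient satisfies $C_{G/N}(a) = C_G(a)N/N$, and if $A$ is noncyclic abelian then $G = \langle C_G(B) \mid B \leq A,\ [A:B]=q\rangle$ and similar decompositions. The key observation is that a subgroup $B \leq A$ of corank one is again elementary abelian, of order $q^{r-1}$, and acts coprimely on $G$.

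**The inductive core.** I would proceed by induction on $r$, the base case $r=3$ being exactly Theorem~\ref{a2} (for part $(1)$ with $\gamma_1 = $ the whole group, and for part $(2)$ with $d$ forced to be $0$ or the relevant small value). For the inductive step, consider a corank-one subgroup $B \leq A$. The hypothesis on centralizers of elements $a \in A^\#$ restricts to a hypothesis on centralizers of elements $b \in B^\#$, but with the Engel condition now read inside $C_G(b)$ rather than $C_G(a)$. The delicate point is to arrange the commutator-depth bookkeeping so that the hypothesis "$\gamma_{r-2}(C_G(a))$ is $n$-Engel in $C_G(a)$" for the rank-$r$ group translates, for each corank-one $B$, into a statement that the inductive hypothesis can consume. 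I expect that applying the rank-$(r-1)$ version of the theorem to each $C_G(b)$ (viewed with the induced action of a suitable quotient of $A$) yields that $\gamma_{r-3}(C_G(b))$, or the appropriate derived term, is $k'$-Engel for a bounded $k'$; one then reassembles these local conclusions using the generation $G = \prod_b C_G(b)$ together with the multilinearity of the relevant lower-central or derived words.

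**The main obstacle.** The hard part will be controlling the Engel condition as one passes between "$n$-Engel in $C_G(a)$" and "$n$-Engel in $G$", since the former is genuinely weaker and is exactly what makes the naive analogue of Theorem~\ref{a1} fail (as the paper's counterexample shows). The extra rank is what rescues the argument: one rank is "spent" to upgrade the local Engel condition to a global one before the centralizer-generation step can be applied. Concretely, I anticipate needing a lemma asserting that if $\gamma_j(C_G(a))$ is $n$-Engel in $C_G(a)$ for all $a$ in a rank-$r$ elementary abelian $A$, then $\gamma_{j}(C_G(b))$ is boundedly Engel in $G$ for each $b$ in a rank-$(r-1)$ subgroup $B$; proving this requires combining the fixed-point decomposition with a Lie-theoretic or Zelmanov-type argument to show that local $n$-Engel behaviour, once the relevant commutator subgroup is nilpotent of bounded class, forces bounded global Engel behaviour. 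Once this upgrade lemma is in place, the descent on $r$ runs cleanly, the derived-series part $(2)$ follows from the lower-central part $(1)$ via the inclusion $G^{(d)} \leq \gamma_{2^d}(G)$, and the final bound on $k$ is assembled by tracking the $\{n,q,r\}$-dependence through each inductive stage.
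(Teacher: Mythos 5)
Your proposed induction on $r$ has a directional mismatch that breaks the descent. Passing from the rank-$r$ statement to the rank-$(r-1)$ statement, the index of the relevant lower central term \emph{decreases}: the inductive hypothesis applied to a corank-one subgroup $B$ acting on $G$ (or to $A/\langle b\rangle$ acting on $C_G(b)$) requires that all elements of $\gamma_{r-3}(C_G(b))$, respectively $\gamma_{r-3}(C_G(\langle a,b\rangle))$, be $n$-Engel in the ambient centralizer. But $\gamma_{r-3}$ is a \emph{larger} subgroup than $\gamma_{r-2}$, so the rank-$r$ hypothesis (which only controls $\gamma_{r-2}$) cannot be fed into the rank-$(r-1)$ theorem; the restriction of the hypothesis is strictly too weak, and the inductive step never gets off the ground. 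Relatedly, your closing claim that part $(2)$ follows from part $(1)$ via $G^{(d)}\leq\gamma_{2^d}(G)$ is false: since $2^d\leq r-2$ one has $\gamma_{r-2}(G)\leq\gamma_{2^d}(G)$ and $G^{(d)}\leq\gamma_{2^d}(G)$, but there is no inclusion in either direction between $G^{(d)}$ and $\gamma_{r-2}(G)$, nor between the two hypotheses (elements of $C_G(a)^{(d)}$ versus elements of $\gamma_{r-2}(C_G(a))$). The paper must prove the two parts in parallel, with separate machinery ($A$-special subgroups for the derived series, $\gamma$-$A$-special subgroups for the lower central series), precisely because neither reduces to the other.

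The second gap is that your ``upgrade lemma'' --- local $n$-Engel implies bounded Engel \emph{in $G$} --- is exactly the hard core of the theorem, is left unproven, and is in fact not what the paper establishes or needs. The correct rank-spending observation is weaker and more precise: each relevant piece ($A$-special subgroup of degree $d$, or the factor $P_i$ of an $A$-invariant Sylow subgroup $P=P_1\cdots P_t$ of $G^{(d)}$) lies in $C_G(B)^{(d)}$ with $|A/B|\leq q^{2^d}\leq q^{r-2}$, so $B\cap A_j\neq 1$ for \emph{every} maximal subgroup $A_j$ of $A$; hence every element of $P_i$ is $n$-Engel in each $C_G(A_j)$ --- not in $G$. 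This ``Engel in every $C_G(A_j)$'' condition is then exploited through the decompositions $L=\sum_j C_L(A_j)$ at the Lie-ring level and $M=\sum_j C_M(A_j)$ for abelian sections, and the assembly into a bound on the Engel class of $G^{(d)}$ requires the full apparatus: nilpotency of $G^{(d)}$ via Zorn's theorem and \cite[Theorem 31]{PS-CA2}, the product decomposition of Sylow subgroups, associated Lie rings with Zelmanov's and Bahturin--Zaicev--Linchenko's theorems, and powerful $p$-groups to bound the derived length before inducting on it. Your reassembly step (``generation $G=\prod_b C_G(b)$ together with multilinearity'') glosses over this: products of subgroups consisting of Engel elements need not be Engel, and no multilinear identity transfers the Engel condition across such a product; this assembly is where all the work of Sections 4--6 of the paper actually lives.
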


Let $H,K$ be subgroups  of a profinite group $G$. We denote by $[H,K]$ the closed  subgroup of $G$ generated by all commutators of the form $[h,k]$, with $h\in H$ and $k\in K$.  Thus one can consider inductively the  following closed  subgroups:
$$\gamma_1(G)=G,\quad \gamma_k(G)=[\gamma_{k-1}(G),G], \,\,\text{for}\, k\geq 1\ \mbox{and}$$
$$G^{(0)}=G,\quad G^{(k)}=[G^{(k-1)},G^{(k-1)}],\,\text{for}\, k\geq 1.$$
Finally, we formulate the (non-quantitative) analogues of Theorems \ref{aa1} and \ref{aa2}, respectively.

\begin{theorem}\label{bb1} Let $q$ be a prime, $n$ a positive integer and $A$ an elementary abelian group of order $q^r$ with $r\geq2$ acting coprimely on a profinite group $G$.

$(1)$ If all elements in $\gamma_{r-1}(C_G(a))$ are Engel in $G$ for any $a\in A^\#$, then $\gamma_{r-1}(G)$ is locally nilpotent.

$(2)$ If, for some integer $d$ such that $2^d\leq r-1$, all elements in the $d$th derived group of $C_G(a)$ are Engel in $G$ for any $a\in A^\#$, then the $d$th derived group $G^{(d)}$ is locally nilpotent.
\end{theorem}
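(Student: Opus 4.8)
The plan is to derive the statement from the finite quantitative Theorem~\ref{aa1} together with the Wilson--Zelmanov theorem \cite[Theorem 5]{WZ}, according to which a profinite group is locally nilpotent if and only if it is Engel. Thus for $(1)$ it suffices to prove that $\gamma_{r-1}(G)$ is an Engel group, and for $(2)$ that $G^{(d)}$ is an Engel group. The case $r=2$ of $(1)$ is precisely Theorem~\ref{b1} (there $\gamma_{r-1}(G)=G$ and $\gamma_{r-1}(C_G(a))=C_G(a)$), so that result is the prototype the general argument imitates.

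First I would record the covering lemmas for coprime actions used in \cite{PS-Gu,PS-CA,PS-CA2}. Their basis is the elementary fact that a noncyclic coprime action satisfies $H=\langle C_H(a):a\in A^\#\rangle$ for every $A$-invariant closed subgroup $H$; iterating this along the lower central series (respectively the derived series, which is where the hypothesis $2^d\le r-1$ enters) shows that $\gamma_{r-1}(G)$ is topologically generated by the family $\{\gamma_{r-1}(C_G(a)):a\in A^\#\}$, and $G^{(d)}$ by $\{C_G(a)^{(d)}:a\in A^\#\}$. By hypothesis every generator so produced is a left Engel element of $G$; hence the subgroup we must prove Engel is a closed normal subgroup topologically generated by left Engel elements of $G$. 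The whole difficulty is that being generated by Engel elements does not, by itself, force a profinite group to be Engel.

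I expect the passage from this generation property to the Engel conclusion to be the heart of the proof, and the obstacle is one of uniformity: the hypothesis supplies, for each element and each target, only an unquantified Engel length, whereas Theorem~\ref{aa1} requires a single integer $n$. The device I would use is first to reduce to $G$ topologically finitely generated. Local nilpotency of $\gamma_{r-1}(G)$ is tested on finitely generated closed subgroups, and, replacing the relevant data by the $A$-invariant closed subgroup it generates (again finitely generated and coprimely acted on, with the hypotheses inherited since $\gamma_{r-1}$ of a smaller centralizer lies inside $\gamma_{r-1}(C_G(a))$), one reduces to showing that $\gamma_{r-1}$ of a finitely generated $G$ is nilpotent. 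Finite generation is exactly what makes uniformization feasible: now only a finite generating set of Engel elements $x$ need be controlled, and for each the closed sets $\{g\in G:[g,_{m}x]=1\}$ cover the compact group $G$, so by the Baire category theorem one of them contains a coset of an open subgroup, giving a fixed-length Engel identity valid on that coset. Intersecting the finitely many resulting open subgroups produces a single open $A$-invariant subgroup and a single integer $n$ yielding, in every finite quotient, an Engel-type condition strong enough to trigger Theorem~\ref{aa1} (equivalently, to run the Lie-theoretic machinery of \cite{WZ,Z1,Z0,BZ} behind it). Theorem~\ref{aa1} then gives that $\gamma_{r-1}(G/N)$ is $k$-Engel with $k=k(n,q,r)$ independent of $N$; passing to the inverse limit, $\gamma_{r-1}(G)$ is $k$-Engel, hence Engel, hence locally nilpotent by Wilson--Zelmanov, and nilpotent because it is finitely generated.

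Part $(2)$ runs along identical lines: Wilson--Zelmanov reduces it to showing $G^{(d)}$ is Engel, the derived-series covering lemma (legitimate precisely because $2^d\le r-1$) exhibits $G^{(d)}$ as generated by the Engel elements of the groups $C_G(a)^{(d)}$, and the same reduction to the finitely generated case followed by the compactness uniformization feeds Theorem~\ref{aa1}$(2)$. The only genuinely hard ingredient throughout is the uniformization step, where the non-quantitative Engel hypothesis is upgraded to a bounded Engel condition on a subgroup of finite index so that the quantitative theorem applies; the remainder is bookkeeping with coprime-action lemmas and an inverse limit.
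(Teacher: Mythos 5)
There is a genuine gap, and it sits exactly where you predicted the heart of the proof would be: the uniformization step does not work as you describe. The Baire category theorem applied to the closed sets $\{g\in G:[g,_{m}x]=1\}$ yields an integer $n$ and a \emph{coset} $gH$ of an open subgroup on which the Engel identity holds --- a coset identity, not an identity on $H$ itself. Intersecting the finitely many open subgroups so obtained does not make anything $n$-Engel on the intersection, and in any case Theorem~\ref{aa1} needs \emph{all} elements of $C_{G/N}(a)^{(d)}$ (not just finitely many generators) to be $n$-Engel in $G/N$ for a single $n$ valid for every $N$. What the hypothesis actually gives in a finite quotient $Q$ is only that each relevant element is Engel, hence $n(Q)$-Engel with $n(Q)$ depending on $|Q|$; so Theorem~\ref{aa1} produces a bound $k(Q)$ depending on $Q$, and no inverse-limit argument yields your claim that ``$\gamma_{r-1}(G/N)$ is $k$-Engel with $k$ independent of $N$''. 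All this route really gives is that every finite quotient of $G^{(d)}$ is nilpotent, i.e.\ that $G^{(d)}$ is pronilpotent --- which is how the paper's proof \emph{begins}, not how it ends. (Indeed the paper never proves $G^{(d)}$ is boundedly Engel; that stronger statement is not what the theorem asserts.) In the paper the coset identity is turned into a genuine $n$-Engel condition only by passing to the Hall $\pi'$-subgroup $R=O_{\pi'}(G^{(d)})$ of the pronilpotent group $G^{(d)}$, where $\pi$ is the finite set of primes dividing the indices of the open subgroups produced by Baire; Theorem~\ref{aa1}(2) can be triggered only on $R$.

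What is then entirely missing from your proposal is the treatment of the finitely many excluded primes $p\in\pi$, which is the bulk of the paper's argument: for the Sylow $p$-subgroup $P$ of $G^{(d)}$ one uses the profinite version of Theorem~\ref{teoPS-CA2} to write $P=P_1\cdots P_t$ with each $P_j\le C_G(a)^{(d)}$ for suitable $a\in A^{\#}$; given finitely many elements of $P$, one forms the finitely generated subgroup $Y$ generated by the $A$-orbits of their factors, proves that the Lie algebra $L_p(Y)$ is nilpotent (special subspaces, ad-nilpotency via Lemma~\ref{leWZ}, a polynomial identity via Lemma~\ref{lpi}, extension of scalars and Zelmanov's Theorem~\ref{Z1992}), then invokes Lazard's criterion to conclude that $Y$ is $p$-adic analytic, hence linear over the field of $p$-adic numbers, and finally Gruenberg's theorem (in a linear group the Engel elements form the Hirsch--Plotkin radical) together with the decomposition $Y=Y_1\cdots Y_t$ into Engel generators to conclude that $Y$ is nilpotent. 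Note also that your preliminary reduction ``to finitely generated $G$'' is itself flawed: finitely many elements of $\gamma_{r-1}(G)$ need not lie in $\gamma_{r-1}(H)$ for any finitely generated $A$-invariant closed subgroup $H$ containing them, so nilpotency of $\gamma_{r-1}(H)$ says nothing about the subgroup they generate; this is precisely why the paper works inside the Sylow subgroups through the $A$-special product decomposition rather than through such a reduction.
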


\begin{theorem}\label{bb2} Let $q$ be a prime, $n$ a positive integer and $A$ an elementary abelian group of order $q^r$ with $r\geq3$ acting coprimely on a profinite group $G$.

$(1)$ If all elements in $\gamma_{r-2}(C_G(a))$ are Engel in $C_G(a)$ for any $a\in A^\#$, then $\gamma_{r-2}(G)$ is locally nilpotent.

$(2)$ If, for some integer $d$ such that $2^d\leq r-2$, all elements in the $d$th derived group of $C_G(a)$ are Engel in $C_G(a)$ for any $a\in A^\#$, then the $d$th derived group $G^{(d)}$ is locally nilpotent.
\end{theorem}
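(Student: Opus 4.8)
The plan is to prove part $(1)$ in full and to obtain part $(2)$ by the identical argument with the derived series in place of the lower central series, the only change being that the combinatorial input described below becomes available precisely under the stated bound $2^d\le r-2$. Throughout I would use the Wilson--Zelmanov theorem in the form ``a profinite group is locally nilpotent if and only if it is Engel''; this lets me replace the target ``$\gamma_{r-2}(G)$ is locally nilpotent'' by ``$\gamma_{r-2}(G)$ is Engel'' and, conversely, turn pointwise Engel hypotheses into local nilpotency of the relevant subgroups. I would also fix the standard coprime bookkeeping: the $A$-invariant open normal subgroups $N$ are cofinal, for each such $N$ one has $C_{G/N}(a)=C_G(a)N/N$, and for a coprime action of an elementary abelian group of rank at least two every $A$-invariant section is generated by the fixed points of the nontrivial elements of $A$.

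First I would dispose of the centralizers. Fix $a\in A^\#$. By hypothesis every element of $\gamma_{r-2}(C_G(a))$ is Engel in $C_G(a)$, hence \emph{a fortiori} Engel in the smaller subgroup $\gamma_{r-2}(C_G(a))$; thus $\gamma_{r-2}(C_G(a))$ is an Engel profinite group and so, by Wilson--Zelmanov, it is locally nilpotent. (Alternatively, and in the spirit of the companion Theorem \ref{bb1}, the same conclusion follows by applying Theorem \ref{bb1} at rank $r-1$ to the action of $A/\langle a\rangle$ on $C_G(a)$, since $\gamma_{r-2}(C_G(\langle a,b\rangle))\le\gamma_{r-2}(C_G(a))$ for every $b$.) At this point the whole content of the theorem has been concentrated into the following \emph{gluing} statement: if $A$ has rank $r\ge3$ and $\gamma_{r-2}(C_G(a))$ is locally nilpotent for every $a\in A^\#$, then $\gamma_{r-2}(G)$ is locally nilpotent. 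Its base case $r=3$ reads ``$C_G(a)$ locally nilpotent for all $a$ implies $G$ locally nilpotent'', which is exactly Theorem \ref{b2}.

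The engine for the gluing is a combinatorial lemma on coprime actions, of the type established in \cite{PS-Gu,PS-CA,PS-CA2}: working first in a finite quotient $G/N$ and then passing to the inverse limit, one would express $\gamma_{r-2}(G)$ as a product of finitely many closed subgroups $M_a$ $(a\in A^\#)$, each normal in $\gamma_{r-2}(G)$ and contained in $\gamma_{r-2}(C_G(a))$. The rank hypothesis enters exactly here: generating $\gamma_{r-2}(G)$ from the terms $\gamma_{r-2}(C_G(a))$ already needs rank $r-1$, and the one remaining unit of rank is what should upgrade the mere generation to a decomposition into normal factors (for part $(2)$ the analogous budget is $2^d\le r-2$, as expressing $G^{(d)}$ through the $(C_G(a))^{(d)}$ costs rank $2^d+1$). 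Granting such a decomposition, each $M_a$ is a closed subgroup of the locally nilpotent group $\gamma_{r-2}(C_G(a))$, hence locally nilpotent; a product of two closed normal subgroups is again closed, and by the Hirsch--Plotkin theorem the join of normal locally nilpotent subgroups is locally nilpotent, so iterating over the finite set $A^\#$ would yield that $\gamma_{r-2}(G)$ is locally nilpotent.

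The hard part will be precisely this combinatorial decomposition with \emph{normal} factors lying inside the centralizer terms. Plain generation $\gamma_{r-2}(G)=\langle\gamma_{r-2}(C_G(a)):a\in A^\#\rangle$ will not suffice, because a profinite group generated by locally nilpotent subgroups need not be locally nilpotent; and one cannot import the finite quantitative Theorem \ref{aa2} to finish, since the local nilpotency of the $\gamma_{r-2}(C_G(a))$ provides only a pointwise Engel condition with no uniform Engel degree across the finite quotients, so the per-quotient bounds $k$ are unbounded and do not assemble into a global identity. It is exactly to bypass this non-uniformity that the argument must produce the normal decomposition at the level of $G$ (equivalently, establish the Engel property of $\gamma_{r-2}(G)$ directly) and then invoke Wilson--Zelmanov together with Hirsch--Plotkin, rather than quoting the quantitative result.
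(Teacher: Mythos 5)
Your argument has a genuine gap at its central step: the claimed decomposition of $\gamma_{r-2}(G)$ (resp.\ $G^{(d)}$) as a product of finitely many closed subgroups $M_a$, each \emph{normal} in $\gamma_{r-2}(G)$ and contained in $\gamma_{r-2}(C_G(a))$. No lemma of this kind appears in \cite{PS-Gu,PS-CA,PS-CA2}, and none can be expected to hold. What the coprime machinery actually provides (Theorems \ref{teoPS-CA2} and \ref{teoPS-CA} and their profinite versions) are product decompositions of $A$-invariant \emph{Sylow} subgroups of $G^{(d)}$ or $\gamma_{r-1}(G)$ into factors $P\cap H$ with $H$ an ($\gamma$-)$A$-special subgroup; the rank budget buys containment of these factors in subgroups of the form $C_G(a)^{(d)}$, but the factors are in general not normal, and no extra unit of rank upgrades generation to normality. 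Indeed, if your decomposition existed, then Fitting's theorem (or Hirsch--Plotkin, in the profinite setting) would immediately yield the finite Theorems \ref{aa2} and the theorems of \cite{PS-CA2} with class bounded by the sum of the classes of the factors; those results instead require long Lie-theoretic proofs, which is strong evidence that no such normal decomposition is available at any rank. Your own rank-$2$ observation already shows the mechanism of failure: there $G$ is generated by abelian centralizers yet need not be nilpotent, precisely because the centralizer pieces cannot be replaced by normal subgroups of $G$ inside them.

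A second, related defect is that your reduction to the ``gluing statement'' discards part of the hypothesis that the actual proof uses essentially: you retain only that $\gamma_{r-2}(C_G(a))$ is locally nilpotent, whereas the hypothesis makes its elements Engel in all of $C_G(a)$, hence Engel on every $C_G(A_j)$ contained in $C_G(a)$. This cross-centralizer condition (recorded as (\ref{eq1.9}) in the paper) is exactly what the paper exploits, and without it your gluing statement is not known to be true; as you yourself note, it cannot be recovered from the finite quantitative theorems by inverse limits, because local nilpotency of the centralizer terms gives no uniform Engel degree across finite quotients. The paper's route is quite different: (i) the finite Theorem \ref{aa2}(2) applied to the finite quotients gives that $G^{(d)}$ is pronilpotent; (ii) a Baire category argument (\cite[Lemma 2.5]{PS-CA3}) produces, for each $a\in A^\#$, a uniform Engel degree on an open subgroup of $C_G(a)^{(d)}$, and Burns--Medvedev \cite{BM} then allows one to split off a finite set $\pi$ of bad primes so that $C_R(a)^{(d)}$ is nilpotent for $R=O_{\pi'}(G^{(d)})$; (iii) $R$ is handled by the quantitative finite result \cite[Theorem 31]{PS-CA2} together with an Engel argument that runs an induction on derived length using (\ref{eq1.9}) and Gruenberg's theorem \cite[12.3.3]{Rob}; (iv) each of the finitely many remaining Sylow subgroups is handled by Lie methods: Lemma \ref{lpi}, Zelmanov's Theorem \ref{Z1992}, Lazard's criterion \cite{L} and the Lubotzky--Mann theory \cite{luma}, again finishing with the Engel argument that needs the cross-centralizer hypothesis. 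To repair your proposal you would have to either prove the normal decomposition (unlikely, by the above) or abandon it and reconstruct uniformity along these lines.
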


Thus the purpose of the present article is to provide the proofs for Theorems \ref{aa1}, \ref{aa2}, \ref{bb1} and \ref{bb2}. The paper is organized as follows.  In  Sections 2 and 3 we present the Lie-theoretic machinery that will be  useful within  the proofs. Later in Section 4 a technical tool, introduced in \cite{PS-CA}, is extended to the context of profinite groups. Sections 5 and 6  are devoted to proving Theorems \ref{aa1} and \ref{aa2}. Finally in Section 7 we give the details of the proofs of Theorems \ref{bb1} and \ref{bb2}. 

Throughout the paper we use, without special references, the well-known properties of coprime actions (see for example \cite[Lemma 3.2]{PSprofinite}):
 
 If $\alpha$ is a coprime automorphism of a profinite group $G$, then \linebreak[4] $C_{G/N}(\alpha)=C_G(\alpha)N/N$ for any $\alpha$-invariant normal subgroup $N$.
 
If $A$ is a noncyclic abelian group acting coprimely on a profinite group $G$, then $G$ is generated by the subgroups $C_G(B)$, where $A/B$ is cyclic.

        \section{Results on  Lie algebras and Lie rings}
Let $X$ be a subset of a Lie algebra $L$. By a commutator in elements of $X$ we mean any element of $L$ that can be obtained as a Lie product of elements of $X$ with some system of brackets. If $x_1,\ldots,x_k,x, y$ are elements of $L$, we define inductively 
$$[x_1]=x_1; [x_1,\ldots,x_k]=[[x_1,\ldots,x_{k-1}],x_k]$$
and 
$[x,_0y]=x; [x,_my]=[[x,_{m-1}y],y],$ for all positve integers $k,m$.  
As usual, we say that an element $a\in L$ is ad-nilpotent if there exists a positive integer $n$ such that $[x,_na]=0$ for all $x\in L$. If $n$ is the least integer with the above property, then we say that $a$ is ad-nilpotent of index $n$.

The next theorem represents the most general form of the Lie-theoretical part of the solution of the restricted Burnside problem. It was announced by Zelmanov in \cite{Z1}. A detailed proof was  published in \cite{zenew}.

\begin{theorem}\label{Z1992}
Let $L$ be a Lie algebra over a field and suppose that $L$ satisfies a polynomial identity. If $L$ can be generated by a finite set $X$ such that every commutator in elements of $X$ is ad-nilpotent, then $L$ is nilpotent.
\end{theorem}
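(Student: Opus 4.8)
The plan is to deduce global nilpotency from the combinatorial theory of sandwich elements, which is the true core of Zelmanov's solution of the restricted Burnside problem. First I would reduce to a universal situation: replacing $L$ by a relatively free PI Lie algebra on $|X|$ generators in which each generating commutator is ad-nilpotent of a prescribed index, it suffices to bound the nilpotency class in terms of $|X|$, the ad-nilpotency index, and the multidegree of the identity. Throughout, the decisive notion is that of a \emph{sandwich}: an element $c\in L$ with $(\mathrm{ad}\,c)^2=0$ and $\mathrm{ad}(c)\,\mathrm{ad}(x)\,\mathrm{ad}(c)=0$ for every $x\in L$. The whole argument hinges on manufacturing such elements from the hypotheses.

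Two facts about sandwiches drive the proof. The first is the Kostrikin--Zelmanov sandwich theorem, asserting that a Lie algebra generated by finitely many sandwiches is nilpotent of class bounded in terms of the number of generators; this supplies the eventual nilpotency once enough sandwiches are in hand. The second is that a prime Lie algebra contains no nonzero sandwich. I would therefore argue by contradiction: if $L$ were not nilpotent, the structure theory of PI Lie algebras would let me pass to a prime, non-nilpotent, PI quotient still generated by images of ad-nilpotent commutators, and the goal becomes to exhibit a single nonzero sandwich there, contradicting primeness.

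Producing that sandwich is the heart of the matter. Here I would pass to the Grassmann envelope and work with the associated Lie superalgebra, invoking Zelmanov's classification-free structure theory for prime PI Lie algebras, in which such an algebra is governed either by the commutator algebra of a simple associative PI-algebra or, through the Tits--Kantor--Koecher construction, by a Jordan algebra. Linearizing the polynomial identity and repeatedly bracketing with the ad-nilpotent commutators of $X$ forces degeneracies against this generic structure; the delicate combinatorics of multilinear Lie monomials then upgrades an element $c$ with $(\mathrm{ad}\,c)^2=0$ into a genuine sandwich, which is the sought contradiction.

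The main obstacle is precisely this last step: converting ``ad-nilpotent of bounded index together with a polynomial identity'' into the existence of an honest nonzero sandwich. This is where the entire depth of the theory resides, drawing on superalgebra techniques, Jordan-algebraic structure theory, and a Shirshov-type control of Lie monomials; by contrast, once sandwiches are available the nilpotency conclusion is almost formal via the sandwich theorem. For the present paper the result is used as a black box, the complete proof being that of \cite{zenew}.
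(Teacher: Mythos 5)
There is nothing in the paper to compare your argument against: Theorem \ref{Z1992} is not proved there at all. It is quoted as Zelmanov's theorem, announced in \cite{Z1}, with the detailed proof published in \cite{zenew}, and it is used throughout the paper strictly as a black box. Your closing sentence concedes exactly this, so on the only point where a comparison with the paper is possible, your treatment and the authors' coincide. The real question is whether your sketch could stand as a proof, and it cannot, for two reasons, one structural and one mathematical.

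Structurally, the step you yourself identify as ``the heart of the matter'' --- extracting a nonzero sandwich from the polynomial identity together with ad-nilpotency of the generating commutators --- is not a step to which your argument reduces the theorem; it \emph{is} the theorem, i.e.\ essentially the entire content of \cite{zenew}, and your text makes no progress on it beyond naming the tools (Grassmann envelopes, Tits--Kantor--Koecher, Shirshov-type combinatorics). Mathematically, the second of your two ``driving facts'' is false, and false precisely in the hard case. You claim that a prime Lie algebra contains no nonzero sandwich, but in characteristic $p\geq 5$ the truncated Witt algebra $W(1\!:\!1)=\mathrm{Der}\bigl(F[x]/(x^p)\bigr)$ is simple (hence prime) and finite-dimensional (hence PI), yet the element $c=x^{p-1}\,d/dx$ satisfies $(\mathrm{ad}\,c)^2=0$, and in odd characteristic the identity $\mathrm{ad}\bigl([c,[c,x]]\bigr)=-2\,\mathrm{ad}(c)\,\mathrm{ad}(x)\,\mathrm{ad}(c)$ then forces $\mathrm{ad}(c)\,\mathrm{ad}(x)\,\mathrm{ad}(c)=0$ for all $x$, so $c$ is a genuine nonzero sandwich. (The presence of sandwiches in simple modular Lie algebras is exactly Kostrikin's classical observation separating Cartan-type from classical algebras.) Since the characteristic-zero case of Theorem \ref{Z1992} essentially follows from Zelmanov's earlier work on Engel Lie algebras, positive characteristic is where all the difficulty lives, and there your contradiction scheme --- pass to a prime non-nilpotent PI quotient and contradict primeness by exhibiting a sandwich --- collapses: prime, even simple, PI Lie algebras can contain sandwiches. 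Zelmanov's actual argument must instead quotient by the locally nilpotent radical and use the generation hypothesis to produce a sandwich generating a \emph{locally nilpotent ideal}, which is a much more delicate matter than an appeal to primeness. So your outline has the right vocabulary but, even read charitably as a roadmap, it takes a wrong turn at the decisive point.
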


The next theorem, which was proved by Bahturin and Zaicev for soluble groups $A$ \cite{BZ} and later extended by Linchenko to the general case \cite{l}, provides an important criterion for a Lie algebra to satisfy a polynomial identity. 
 
\begin{theorem}\label{blz}
Let $L$ be a Lie algebra over a field $K$. Assume that a finite group $A$ acts on $L$ by automorphisms in such a manner that $C_L(A)$ satisfies a polynomial identity. Assume further that the characteristic of $K$ is either $0$ or prime to the order of $A$. Then $L$ satisfies a polynomial identity.
\end{theorem}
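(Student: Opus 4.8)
The plan is to reduce the statement, step by step, to a combinatorial fact about finite gradings of Lie algebras. First I would extend the ground field: replacing $K$ by its algebraic closure $\overline K$ and $L$ by $L\otimes_K\overline K$, the action of $A$ extends $\overline K$-linearly, one has $C_{L\otimes\overline K}(A)=C_L(A)\otimes_K\overline K$ (fixed points commute with the faithfully flat base change by a field extension), and a multilinear polynomial identity is preserved under this extension; conversely an identity of $L\otimes\overline K$ restricts to one of $L$. Since $\overline K$ is infinite we may also multilinearize the given identity of $C_L(A)$ at will. Hence we may assume $K$ is algebraically closed, so in particular $K$ is a splitting field for every subgroup of $A$ and $\operatorname{char}K\nmid|A|$.

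Next I would treat abelian $A$ as the main case and pass to a grading. Since $\operatorname{char}K\nmid|A|$ and $K$ is algebraically closed, each $a\in A$ acts as a semisimple operator on $L$, and the commuting family $\{a:a\in A\}$ is simultaneously diagonalizable. Writing $\widehat A=\operatorname{Hom}(A,K^\ast)$ for the character group, this yields a decomposition $L=\bigoplus_{\chi\in\widehat A}L_\chi$, where $L_\chi=\{x\in L:a(x)=\chi(a)x\text{ for all }a\in A\}$. Because the $a$ act as Lie automorphisms, $[L_\chi,L_\psi]\subseteq L_{\chi\psi}$, so this is a grading of $L$ by the finite group $\widehat A$ of order $|A|$, whose neutral component is exactly $L_1=C_L(A)$, and by hypothesis $L_1$ satisfies a multilinear identity.

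The heart of the argument is then the purely graded statement: a Lie algebra graded by a finite group of order $m$ whose neutral component satisfies a multilinear identity of degree $d$ itself satisfies a polynomial identity of degree bounded in terms of $d$ and $m$. To prove this I would start from a multilinear identity $f(x_1,\dots,x_d)=0$ of $L_1$ and evaluate a long Lie monomial on homogeneous arguments of degrees $g_1,\dots,g_N$; looking at the partial products $\gamma_j=g_1\cdots g_j$ and applying the pigeonhole principle once $N$ exceeds $md$, I would locate many indices at which the $\gamma_j$ coincide, so that the corresponding consecutive blocks have neutral total degree and, after suitable bracketing, lie in $L_1$. Feeding these blocks into $f$ and then summing over all homogeneous components — legitimate since there are only $m$ of them and every element of $L$ is a sum of homogeneous ones — produces a genuine multilinear identity of $L$. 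Finally, the case of arbitrary finite $A$ reduces to the abelian one: when $A$ is solvable one fixes a normal series with abelian quotients and applies the abelian case repeatedly, using that each $C_L(A_i)$ is $A$-invariant and that $A/A_i$ acts on it with fixed-point algebra $C_L(A)$; the general (non-solvable) case is Linchenko's extension.

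The main obstacle I anticipate is precisely the graded lemma, that is, turning the pigeonhole-produced ``neutral blocks'' into a single honest polynomial identity. The Lie bracket is non-associative, so unlike the free associative word $z_1z_2\cdots z_N$ — in which any interval $z_{i+1}\cdots z_j$ is literally a homogeneous subword of degree $\gamma_j\gamma_i^{-1}$ — one cannot simply read off interior sub-brackets of neutral degree; one must argue, via Jacobi rearrangements (or by passing to an associative envelope where associativity is available), that the selected sub-expressions really are homogeneous elements of $L_1$ to which $f$ may legitimately be applied, and one must control the degree bookkeeping so that the resulting expression vanishes for every homogeneous substitution and hence identically. Removing the solvability assumption in the final reduction is the second genuinely difficult point.
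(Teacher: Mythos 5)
You should first know that the paper contains no proof of this statement to compare against: Theorem~\ref{blz} is imported as a black box, attributed to Bahturin and Zaicev \cite{BZ} for soluble $A$ and to Linchenko \cite{l} in general, and that division of labour is exactly what your proposal reproduces. Your preparatory reductions are correct and are indeed the standard ones: base change to $\overline K$ (fixed points commute with field extension by flatness, and multilinear identities pass up and down); for abelian $A$, the simultaneous diagonalization giving $L=\bigoplus_{\chi\in\widehat A}L_\chi$ with $[L_\chi,L_\psi]\subseteq L_{\chi\psi}$ and $L_1=C_L(A)$; and the induction along a normal series reducing soluble $A$ to abelian $A$.

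As a proof, however, the proposal has two genuine gaps, both of which you partly acknowledge. First, the ``purely graded statement'' --- a Lie algebra graded by a finite group whose neutral component is PI is itself PI --- is not a lemma on the way to the theorem; it essentially \emph{is} the Bahturin--Zaicev theorem, and your pigeonhole sketch does not prove it. That sketch is the proof for \emph{associative} algebras, where every interval $z_{i+1}\cdots z_j$ of a monomial is an honest subword, homogeneous of degree $\gamma_i^{-1}\gamma_j$; in a Lie algebra the interior blocks of a left-normed commutator $[z_1,\dots,z_N]$ are not sub-expressions of it, and no routine ``Jacobi rearrangement'' is known to repair this --- the actual argument in \cite{BZ} for the Lie case is substantially more involved and does not proceed by pigeonhole on partial degrees. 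Identifying this obstacle, as you do, is not the same as overcoming it, and the entire analytic content of the theorem lives exactly there. Second, for non-soluble $A$ you write that ``the general case is Linchenko's extension.'' Since the statement you were asked to prove includes the general case, this is circular: Linchenko's argument \cite{l} (Hopf-algebra techniques, needed precisely because for nonabelian $A$ the dual of $KA$ is not a group algebra and there is no grading by a character group) is not sketched at all. So what you have is a correct reduction of the theorem to two deep cited results, one of which is the theorem itself.
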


Both Theorems \ref{Z1992} and  \ref{blz} admit  the following respective quantitative versions (see for example \cite{KS} and  \cite{aaaa}).

\begin{theorem}\label{Z1}
Let $L$ be a Lie algebra over a field $K$ generated by $a_1,\ldots,a_m$. Suppose that $L$ satisfies a polynomial identity $f\equiv 0$ and each commutator in $a_1,\ldots,a_m$ is ad-nilpotent of index at most $n$. Then $L$ is nilpotent of $\{f,K,m,n\}$-bounded class.
\end{theorem}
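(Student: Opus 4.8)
The plan is to derive the quantitative statement from the qualitative Theorem \ref{Z1992} by a compactness argument, rather than by re-running Zelmanov's proof with explicit bookkeeping of the constants. Concretely, I would argue by contradiction: fix $f$, $K$, $m$ and $n$, and suppose that no bound on the nilpotency class exists. Then for every positive integer $c$ there is a Lie algebra $L_c$ over $K$, generated by elements $a_1^{(c)},\dots,a_m^{(c)}$, satisfying $f\equiv 0$, in which every commutator in the generators is ad-nilpotent of index at most $n$, yet $L_c$ is not nilpotent of class $c$, i.e.\ $\gamma_{c+1}(L_c)\neq 0$.

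The key step is to pass to an ultraproduct. Fix a nonprincipal ultrafilter $\mathcal{U}$ on the positive integers and form $L^{*}=\prod_c L_c/\mathcal{U}$, a Lie algebra over the ultrapower $K^{*}=\prod_c K/\mathcal{U}$, which is a field of the same characteristic as $K$. Writing $\bar a_i=(a_i^{(c)})_c/\mathcal{U}$, let $M=\langle \bar a_1,\dots,\bar a_m\rangle\leq L^{*}$. I would then check that $M$ satisfies the hypotheses of Theorem \ref{Z1992}. The identity $f\equiv 0$ and, for each fixed commutator word $w$ in the generators, the ad-nilpotency condition $[x,_n w]=0$ are universally quantified equalities built from the bracket; by {\L}o\'s's theorem they transfer from the $L_c$ to $L^{*}$, and, being universal, they are inherited by the subalgebra $M$. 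Since $M$ is generated by the $m$ elements $\bar a_i$ and every commutator in these generators is ad-nilpotent of index $\leq n$ in $L^{*}$ (hence in $M$), Theorem \ref{Z1992} applies and $M$ is nilpotent, of some class $c_0$.

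It remains to transfer this conclusion back. Because $M$ is generated by the $\bar a_i$, nilpotency of class $c_0$ is equivalent to the vanishing of the finitely many left-normed commutators $[\bar a_{i_1},\dots,\bar a_{i_{c_0+1}}]$ with indices in $\{1,\dots,m\}$. Each such equality is true in $L^{*}$, so by {\L}o\'s's theorem the corresponding relation $[a_{i_1}^{(c)},\dots,a_{i_{c_0+1}}^{(c)}]=0$ holds in $L_c$ for $\mathcal{U}$-almost all $c$; intersecting these finitely many $\mathcal{U}$-sets gives $\gamma_{c_0+1}(L_c)=0$ for $\mathcal{U}$-many $c$. As $\mathcal{U}$ is nonprincipal, the cofinite set $\{c:c\geq c_0\}$ also lies in $\mathcal{U}$, so some $c\geq c_0$ satisfies $\gamma_{c_0+1}(L_c)=0$, whence $\gamma_{c+1}(L_c)\subseteq\gamma_{c_0+1}(L_c)=0$, contradicting the choice of $L_c$. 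This contradiction yields the desired uniform bound $k=k(f,K,m,n)$.

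I expect the main obstacle to be the careful verification that every hypothesis is expressible by sentences preserved under both ultraproducts and passage to subalgebras; in particular that the bound $n$ on the ad-nilpotency index, as opposed to mere ad-nilpotency, is genuinely a universal identity, which is precisely what makes it survive the ultraproduct and lets Theorem \ref{Z1992} produce a class $c_0$ that the transfer back can exploit. The remaining delicate point is the passage to the subalgebra $M$ generated by the ultralimit generators, since $M$ need not equal $L^{*}$; here one only uses that the universal hypotheses descend to $M$ and that nilpotency is checked on the finitely many generating commutators. An entirely different route, avoiding logic altogether, would be to reprove Theorem \ref{Z1992} while tracking the dependence of all constants on $f$, $K$, $m$ and $n$, as is done in the cited references, but this is considerably more laborious.
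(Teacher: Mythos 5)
Your proposal is correct, but it takes a different route from the one the paper relies on. Note first that the paper does not prove Theorem \ref{Z1} at all: it quotes it from \cite{KS} and \cite{aaaa}, and the standard derivation given there obtains the quantitative statement from the qualitative Theorem \ref{Z1992} via a universal (relatively free) algebra rather than via ultraproducts. Concretely: let $F$ be the free Lie algebra over $K$ on $x_1,\ldots,x_m$ and let $I$ be the ideal of $F$ generated by all values $f(u_1,\ldots,u_d)$ with $u_i\in F$ together with all elements $[v,{}_n\,w]$, where $v\in F$ and $w$ ranges over the commutators in $x_1,\ldots,x_m$. The quotient $F/I$ satisfies the hypotheses of Theorem \ref{Z1992}, hence is nilpotent of some class $c_0$ depending only on $f,K,m,n$; and any $L$ as in the statement is a homomorphic image of $F/I$, since the surjection $F\to L$, $x_i\mapsto a_i$, kills $I$ precisely because of the identity $f\equiv 0$ and the \emph{uniform} ad-nilpotency index $n$. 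Your compactness argument exploits exactly the same two features -- the hypotheses are universally quantified identities, uniform in $n$ -- but routes them through {\L}o\'s's theorem and a nonprincipal ultrafilter instead of through the generic algebra; your transfer back via the finitely many left-normed commutators of weight $c_0+1$ in the generators is also sound, since for a finitely generated algebra their vanishing is equivalent to nilpotency of class $c_0$. The one point you should make explicit is the treatment of scalars: the identity $f\equiv 0$ may have coefficients in $K$, so the ultraproduct must be taken in the language of $K$-algebras (or as a two-sorted structure including the field), after which $L^{*}$ is an algebra over $K^{*}\supseteq K$ and $f$ remains a nonzero polynomial there; this is routine but it is where "built from the bracket" is slightly too glib. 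Comparing the two approaches: the universal-algebra argument is more economical (no model theory, and the bound is identified as the nilpotency class of one concrete algebra $F/I$), while your argument generalizes more readily to situations where the hypotheses are first-order but not obviously realizable as relations imposed on a free object. Both are equally non-effective: neither produces an explicit bound, which is all the statement of Theorem \ref{Z1} requires.
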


\begin{theorem}\label{bZ1}
 Let $L$ be as in Theorem \ref{blz} and assume that $C_L(A)$ satisfies a polynomial identity $f\equiv 0$. Then $L$ satisfies a polynomial identity of $\{|A|,f,K\}$-bounded degree.
\end{theorem}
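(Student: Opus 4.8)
The plan is to revisit the proof of the qualitative statement, Theorem~\ref{blz}, and to check that every step can be performed with degree bounds depending only on $|A|$, $f$ and $K$. Write $d=\deg f$. First I would linearize: a standard multilinearization replaces $f$ by a multilinear Lie polynomial $f_{0}$ of degree at most $d$ which is still an identity of $C_{L}(A)$. Since every construction below yields multilinear identities, and these behave well under the two operations that follow, it suffices to exhibit a nonzero multilinear identity of $L$ of $\{d,|A|,K\}$-bounded degree. Next I would extend scalars. As the characteristic of $K$ is $0$ or prime to $|A|$, a primitive $e$-th root of unity with $e=\exp A$ lies in a finite extension $\widetilde{K}/K$ of degree bounded in terms of $|A|$. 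Put $\widetilde{L}=L\otimes_{K}\widetilde{K}$. Multilinear identities survive extension of scalars, so $C_{\widetilde{L}}(A)=C_{L}(A)\otimes_{K}\widetilde{K}$ still satisfies $f_{0}$; conversely, expanding the coefficients of any multilinear identity of $\widetilde{L}$ in a $K$-basis of $\widetilde{K}$ produces a nonzero multilinear identity of $L$ of the same degree. Hence we may work over $\widetilde{K}$, where $|A|$ is invertible and the needed roots of unity are present.

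Assume first that $A$ is abelian. Then $\widetilde{L}$ splits into weight spaces $\widetilde{L}=\bigoplus_{\chi\in\widehat{A}}\widetilde{L}_{\chi}$, a grading by the dual group $\widehat{A}$, of order $|A|$, with neutral component $\widetilde{L}_{1}=C_{\widetilde{L}}(A)$ and $[\widetilde{L}_{\chi},\widetilde{L}_{\psi}]\subseteq\widetilde{L}_{\chi\psi}$. Since every element of $\widetilde{L}$ is a sum of its at most $|A|$ homogeneous components, producing an ordinary multilinear identity of $\widetilde{L}$ is, by multilinearity, the same as producing a nonzero multilinear Lie polynomial that vanishes on all homogeneous substitutions. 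The key feature of the abelian grading is that each Lie monomial in homogeneous arguments of grades $g_{1},\dots,g_{s}$ lands in the single component of grade $g_{1}\cdots g_{s}$; in particular, any bracket of homogeneous elements whose grades multiply to $1$ is an element of $\widetilde{L}_{1}=C_{\widetilde{L}}(A)$, and is therefore subject to the identity $f_{0}$.

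The heart of the matter, and the step I expect to be the main obstacle, is the resulting quantitative graded-PI statement: a Lie algebra graded by a finite group of order $|A|$ whose neutral component satisfies a multilinear identity of degree at most $d$ satisfies a nontrivial graded identity, and hence a nontrivial ordinary identity, of degree bounded in terms of $d$ and $|A|$. The mechanism, which is exactly the content of the Bahturin--Zaicev argument \cite{BZ}, is to track the grades of the subproducts occurring in long left-normed brackets of homogeneous elements: because there are only $|A|$ possible grades, pigeonholing on the grades of partial brackets forces suitable segments into the neutral component $\widetilde{L}_{1}$, where $f_{0}$ can be applied; organizing these relations yields a nonzero graded identity, and a further standard step converts a nonzero graded identity into a nonzero ordinary identity. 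The delicate part is to carry this out so that the identity produced is genuinely nontrivial while its degree remains bounded by an explicit function of $d$ and $|A|$, which is precisely the quantitative refinement of Theorem~\ref{blz} that must be supplied by hand, along the lines of \cite{aaaa}.

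Finally I would drop the assumption that $A$ is abelian. Following Linchenko's extension \cite{l} of the Bahturin--Zaicev theorem to arbitrary finite groups, the general case reduces to the abelian one by a construction that depends only on $|A|$, so that all degree estimates remain $\{|A|,f,K\}$-bounded. Collecting the bounds from the four steps---linearization, extension of scalars, the quantitative graded-PI argument, and the non-abelian reduction---shows that $L$ satisfies a nonzero multilinear identity whose degree is bounded in terms of $|A|$, $f$ and $K$, as asserted in Theorem~\ref{bZ1}.
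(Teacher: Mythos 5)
Your preliminary reductions are all sound, and they follow the only route available: multilinearize $f$ (inclusion--exclusion extracts the multilinear component over any field, so this works in positive characteristic too); pass to a finite extension $\widetilde{K}$ of $\{|A|,K\}$-bounded degree containing the relevant roots of unity, noting that multilinear identities ascend under scalar extension and descend by expanding coefficients in a $K$-basis; and, for abelian $A$, decompose $\widetilde{L}$ into weight spaces so that $C_{\widetilde{L}}(A)$ becomes the neutral component of a grading by the dual group. For context, the paper itself offers no proof of Theorem \ref{bZ1}: it quotes the quantitative version from \cite{KS} and \cite{aaaa}, which in turn justify it by tracking the degree bounds through the arguments of \cite{BZ} and \cite{l}. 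So your plan coincides with the treatment in the paper's sources.

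The genuine gap is that the step you yourself call ``the main obstacle'' \emph{is} the theorem, and your proposal invokes it rather than proves it. The assertion that a Lie algebra graded by a group of order $|A|$ whose neutral component satisfies a multilinear identity of degree $d$ must satisfy an ordinary identity of $\{d,|A|\}$-bounded degree is exactly the quantitative Bahturin--Zaicev theorem in its graded formulation; everything preceding it in your argument is routine bookkeeping, so as written the proof is circular at its core. The pigeonhole gloss (grades of partial brackets repeat, forcing segments into $\widetilde{L}_1$ where $f_0$ applies) does not substitute for the actual combinatorial construction, and the delicate point you would have to address is precisely the one you defer: producing an identity that is provably \emph{nonzero} while keeping its degree bounded independently of $L$. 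In addition, your final step misdescribes the non-abelian case: Linchenko's extension \cite{l} is not a reduction to the abelian case but an independent Hopf-algebra-theoretic argument (Bahturin and Zaicev's own reduction along a normal series handles only soluble $A$), so the bound-tracking would have to be carried out through that proof separately --- though it is worth noting that for every application in this paper $A$ is elementary abelian, so the abelian case, had you proved it, would suffice.
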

By combining Theorems \ref{Z1} and \ref{bZ1} the following corollary can be obtained.
\begin{corollary}\label{ZelBazaField}
Let $L$ be a Lie algebra over a field $K$ and $A$ a finite group of automorphisms of $L$ such that $C_L(A)$ satisfies the polynomial identity $f\equiv 0$. Suppose that the characterisitic of $K$ is either $0$ or prime to the order of $A$. Assume that $L$ is generated by an $A$-invariant set of $m$ elements in which every commutator is ad-nilpotent of index at most $n$. Then $L$ is nilpotent of $\{|A|,f,K,m,n\}$-bounded class.
\end{corollary}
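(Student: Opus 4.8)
The plan is to derive Corollary~\ref{ZelBazaField} by combining the two quantitative theorems \ref{Z1} and \ref{bZ1} in the obvious order. First I would invoke Theorem~\ref{bZ1}: since $A$ is a finite group of automorphisms of $L$, the ground field $K$ has characteristic either $0$ or prime to $|A|$, and the fixed-point subalgebra $C_L(A)$ satisfies the polynomial identity $f\equiv 0$, we conclude that $L$ itself satisfies a polynomial identity, say $g\equiv 0$, whose degree is $\{|A|,f,K\}$-bounded. The key point here is that the degree of $g$ depends only on $|A|$, $f$ and $K$, so any subsequent bound expressed in terms of $g$ can be re-expressed in terms of these original data.

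With the global identity $g\equiv 0$ in hand, the second step is to apply Theorem~\ref{Z1} to $L$. By hypothesis $L$ is generated by an $A$-invariant set $X$ of $m$ elements in which every commutator is ad-nilpotent of index at most $n$. Taking $a_1,\dots,a_m$ to be the elements of $X$, the hypotheses of Theorem~\ref{Z1} are met with the identity $g$ in place of $f$, so $L$ is nilpotent of class bounded in terms of $\{g,K,m,n\}$.

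Finally, I would perform the bookkeeping on the parameters: the nilpotency class is $\{g,K,m,n\}$-bounded, and since $g$ is determined up to $\{|A|,f,K\}$-bounded degree by the first step, the class is ultimately $\{|A|,f,K,m,n\}$-bounded, which is exactly the assertion. I do not expect any genuine obstacle in this argument, as it is a direct concatenation of the two cited results; the only subtlety worth spelling out is that it is the \emph{degree} of the derived identity $g$, rather than $g$ itself, that is controlled by $|A|$, $f$ and $K$, and that Theorem~\ref{Z1}'s bound depends on the identity only through such quantities, so the composition of the two bounds remains a bound in the stated parameters.
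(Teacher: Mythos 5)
Your proposal is correct and is exactly the paper's own route: the paper derives Corollary~\ref{ZelBazaField} precisely ``by combining Theorems~\ref{Z1} and~\ref{bZ1}'', i.e.\ first producing a polynomial identity for all of $L$ of $\{|A|,f,K\}$-bounded degree and then feeding it into the quantitative Zelmanov theorem. Your closing remark on the bookkeeping (that the derived identity is controlled by $|A|$, $f$, $K$, so the composed bound stays within the stated parameters) is the only subtlety, and you handle it correctly.
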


For our purpose we will need to work with Lie rings, and not only with Lie algebras.  As usual, $\gamma_i(L)$ denotes the $i$th term of the lower central series of $L$.  In \cite{shusa} it was established the following  result for Lie rings, similar to Corollary \ref{ZelBazaField}.

\begin{theorem}\label{BazaRing}
Let $L$ be a Lie ring and $A$ a finite group of automorphisms of $L$ such that  
$C_L(A)$ satisfies the polynomial identity $f\equiv 0$. Further, assume that $L$ is generated by an $A$-invariant set of $m$ elements such that every commutator in the 
generators is ad-nilpotent of index at most $n$. Then there exist positive integers 
$e$ and $c$, depending only on $|A|, f, m$ and $n$, such that  $e\gamma_c(L)=0$.
\end{theorem}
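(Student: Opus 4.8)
The plan is to deduce the Lie-ring statement from its field counterpart, Corollary \ref{ZelBazaField}, by base-changing $L$ to suitable fields and then reassembling the information over $\mathbb Z$. Throughout, write $M$ for the given $A$-invariant generating set, $|M|=m$. For a commutative ring $R$ the tensor product $L_R:=L\otimes_{\mathbb Z} R$ is again a Lie ring (an $R$-algebra when $R$ is a field), the action of $A$ extends $R$-linearly, and the images of the elements of $M$ form an $A$-invariant generating set of $m$ elements whose commutators remain ad-nilpotent of index at most $n$. The only delicate point in transporting the hypotheses is the fixed-point condition: one needs $C_{L_R}(A)=C_L(A)\otimes_{\mathbb Z} R$, which holds whenever $|A|$ is invertible in $R$, since then averaging over $A$ is available and the $\mathbb Z[A]$-module decomposition is compatible with base change. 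Hence over such $R$ the subalgebra $C_{L_R}(A)$ still satisfies $f\equiv0$.

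First I would treat the characteristic-zero part by taking $R=\mathbb Q$. Corollary \ref{ZelBazaField} then applies and shows that $L_{\mathbb Q}$ is nilpotent of some $\{|A|,f,m,n\}$-bounded class $c-1$. Since $\gamma_{c}(L_{\mathbb Q})=\gamma_{c}(L)\otimes_{\mathbb Z}\mathbb Q=0$, this says precisely that $\gamma_{c}(L)$ is a torsion group, which fixes the value of $c$ in the statement. Next I would treat each prime $p\nmid|A|$ by taking $R=\mathbb F_p$. Again Corollary \ref{ZelBazaField} applies, and the crucial observation is that the resulting bound on the nilpotency class is uniform in $p$: the degree of the identity produced by Theorem \ref{bZ1} and the class produced by Theorem \ref{Z1} depend on the ground field only through the fact that its characteristic is coprime to $|A|$, not on the particular prime. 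Enlarging $c$ if necessary, one obtains a single $\{|A|,f,m,n\}$-bounded $c$ with $\gamma_{c}(L_{\mathbb F_p})=0$, that is $\gamma_{c}(L)\subseteq pL$, for every prime $p\nmid|A|$.

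It remains to convert this into a bound on the additive exponent of $\gamma_c(L)$, and this is the step I expect to be the main obstacle. The difficulty is that nilpotency of $L_{\mathbb F_p}$ yields only the inclusion $\gamma_c(L)\subseteq pL$, rather than $p$-divisibility or vanishing of $p$-torsion, so divisibility information alone does not bound the exponent. To close the gap I would exploit finite generation at the level of the lower central factors: since $\gamma_c(L)$ is torsion and each factor $\gamma_i(L)/\gamma_{i+1}(L)$ is generated by the finitely many basic commutators of weight $i$ in the elements of $M$, every such factor with $i\ge c$ is a finitely generated torsion abelian group, hence finite. Localizing at a prime $p\nmid|A|$ and combining $\gamma_c(L_{(p)})\subseteq pL_{(p)}$ with the finiteness of these factors (a Nakayama-type argument over the local ring $\mathbb Z_{(p)}$) should force the $p$-primary component of $\gamma_c(L)$ to be trivial. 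Thus only the finitely many primes dividing $|A|$ can occur in the torsion of $\gamma_c(L)$, and a parallel but more careful analysis of those primes, tracking the bounded nilpotency data through the finite factors, should bound the corresponding heights. Taking $e$ to be the product of the resulting bounded prime powers yields $e\gamma_c(L)=0$ with $e$ and $c$ both $\{|A|,f,m,n\}$-bounded. The genuinely hard ingredients underlying everything are Zelmanov's nilpotency theorem and the Bahturin–Zaicev–Linchenko criterion, quoted as Theorems \ref{Z1} and \ref{bZ1}; the work specific to this statement lies entirely in the uniform control over the infinitely many primes and the passage from field data to a single integer exponent.
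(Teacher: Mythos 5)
Your opening step is fine: tensoring with $\mathbb{Q}$, noting that fixed points commute with this base change and applying Corollary \ref{ZelBazaField} over $\mathbb{Q}$ does produce a bounded $c$ with $\gamma_c(L)$ torsion. The proposal collapses at the next step, where you assert that the nilpotency class of $L\otimes\mathbb{F}_p$ supplied by Theorems \ref{Z1} and \ref{bZ1} can be taken uniform over all primes $p$ not dividing $|A|$. Nothing supports this: both theorems are stated, here and in the sources, with bounds depending on the ground field $K$ (in effect on its characteristic), and no version of Zelmanov's quantitative theorem with a characteristic-independent class bound is available. This is not a removable technicality but is essentially the content of Theorem \ref{BazaRing} itself: once $e\gamma_c(L)=0$ is known, one gets $\gamma_c(L)\subseteq pL$ for every prime $p$ not dividing $e$, so the uniformity you want is a \emph{consequence} of the theorem and cannot serve as an ingredient without circularity. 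Moreover, even granting uniform class bounds, your closing descent fails: nilpotency of $L\otimes\mathbb{F}_p$ yields only $\gamma_c(L)\subseteq pL$, and a subgroup of $pL$ can perfectly well contain $p$-torsion (in $L=\mathbb{Z}\oplus\mathbb{Z}/p^2$ the subgroup generated by $(0,p)$ lies in $pL$); the Nakayama-type argument you invoke requires $\gamma_c(L)_{(p)}$ to be a finitely generated $\mathbb{Z}_{(p)}$-module, which it is not --- only the individual factors $\gamma_i(L)/\gamma_{i+1}(L)$ are finite, there are infinitely many of them, and $\bigcap_i\gamma_i(L)$ need not vanish. Finally, for the primes dividing $|A|$ you offer no argument at all, and for those primes reduction modulo $p$ does not even preserve the fixed-point hypothesis.

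The proof this paper invokes (given in \cite{shusa}, in the spirit of \cite{KS}) runs along different lines and needs no positive-characteristic input whatsoever; the device you are missing is the relatively free \emph{graded} object. Let $F$ be the free Lie ring on an $A$-set of $m$ free generators, and let $J$ be the ideal generated by all $[u,{}_n\,\rho]$ with $\rho$ a commutator in the generators, together with all values of (the multilinearization of) $f$ on the fixed points of $A$ in $F$; since these generators of $J$ can be taken homogeneous, $\tilde L=F/J$ is graded by weight, $\tilde L=\bigoplus_{w\geq1}\tilde L_w$, and it is generated by $\tilde L_1$. Every Lie ring $L$ satisfying the hypotheses of Theorem \ref{BazaRing} is a homomorphic image of $\tilde L$, so it suffices to prove $e\gamma_c(\tilde L)=0$, and any $e,c$ that work for $\tilde L$ are automatically $\{|A|,f,m,n\}$-bounded because $\tilde L$ depends only on that data. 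Corollary \ref{ZelBazaField} applied to $\tilde L\otimes\mathbb{Q}$ (where the image of $C_F(A)$ controls $C_{\tilde L\otimes\mathbb{Q}}(A)$ by averaging over $A$) shows that every homogeneous component $\tilde L_w$ with $w\geq c$ is a finitely generated torsion, hence finite, abelian group. The decisive point is then the grading: since $\tilde L$ is generated in weight one, $\tilde L_{w+1}=[\tilde L_w,\tilde L_1]$, so by bilinearity of the bracket the additive exponent of $\tilde L_{w+1}$ divides that of $\tilde L_w$; taking $e$ to be the exponent of $\tilde L_c$ kills $\gamma_c(\tilde L)=\bigoplus_{w\geq c}\tilde L_w$ outright. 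It is this exponent propagation along the grading of the universal object --- not localization prime by prime --- that converts ``torsion'' into ``bounded exponent'', and it is precisely the step your plan has no substitute for.
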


We also require the following useful lemma whose proof can be found in \cite{KS}. 

\begin{lemma}\label{lemmanovo}
Let $L$ be a Lie ring and $H$ a subring of $L$ generated by $m$ elements $h_{1},\ldots, h_{m}$ such that all commutators in $ h_i$ are  ad-nilpotent in $L$ of index at most $n$. If $H$ is nilpotent of class $c$, then for some $\{c, m,n\}$-bounded number $u$ we have  $[L,\underbrace{H,\ldots, H}_u]=0.$
\end{lemma}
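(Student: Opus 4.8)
The plan is to recast the statement as a nilpotency assertion about an associative ring of operators. Let $R$ denote the associative subring of $\mathrm{End}(L)$ generated by the operators $\mathrm{ad}(h_1),\dots,\mathrm{ad}(h_m)$, and let $M=\mathrm{ad}(H)$, which is the Lie subring of $R$ under the bracket $[a,b]=ab-ba$ generated by these same operators. Since $[L,\underbrace{H,\dots,H}_{u}]$ is spanned by the elements $x\,\mathrm{ad}(y_1)\cdots\mathrm{ad}(y_u)$ with $x\in L$ and $y_i\in H$, and each $\mathrm{ad}(y_i)\in M\subseteq R$, it suffices to produce a $\{c,m,n\}$-bounded number $u$ with $R^{u}=0$. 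Two observations set this up: first, $M$ is the image of $H$ under $\mathrm{ad}$, hence a nilpotent Lie ring of class at most $c$; second, because $\mathrm{ad}$ is a homomorphism of Lie rings, every left-normed commutator $[\mathrm{ad}(h_{i_1}),\dots,\mathrm{ad}(h_{i_s})]$ equals $\mathrm{ad}([h_{i_1},\dots,h_{i_s}])$ and is therefore a nilpotent operator of index at most $n$ by hypothesis. Thus the task reduces to the following: an associative ring $R$ generated by elements whose Lie closure $M$ is nilpotent of class $c$, and in which every Lie commutator of the generators is nilpotent of index at most $n$, is itself nilpotent of $\{c,m,n\}$-bounded index.

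I would prove this reformulated statement by induction on the nilpotency class $c$ of $M$. In the base case $c=1$ the Lie ring $M$ is abelian, so the generators $\mathrm{ad}(h_i)$ commute pairwise as operators and each satisfies $\mathrm{ad}(h_i)^{n}=0$; any monomial in the generators of length at least $m(n-1)+1$ must repeat some $\mathrm{ad}(h_i)$ at least $n$ times, and after rearranging it vanishes, giving $R^{m(n-1)+1}=0$.

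For the inductive step I would isolate the last nonzero term $Z=\gamma_c(M)$. The key point is that $Z$ is central in $R$: for a generator $g=\mathrm{ad}(h_i)$ and $z=\mathrm{ad}(\zeta)$ with $\zeta\in\gamma_c(H)$ one has $zg-gz=\mathrm{ad}([\zeta,h_i])$ with $[\zeta,h_i]\in\gamma_{c+1}(H)=0$. Moreover $Z$ is additively generated by the at most $m^{c}$ left-normed commutators of weight $c$ in the $\mathrm{ad}(h_i)$, each nilpotent of index at most $n$, so by centrality and the same pigeonhole argument the two-sided ideal $I$ generated by $Z$ satisfies $I^{s_0}=0$ for $s_0=m^{c}(n-1)+1$. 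Passing to $R/I$, the Lie subring generated by the images of the generators is a quotient of $M/\gamma_c(M)$, hence of class at most $c-1$, while those images remain nilpotent of index at most $n$; the induction hypothesis yields a $\{c-1,m,n\}$-bounded $u'$ with $(R/I)^{u'}=0$, that is $R^{u'}\subseteq I$, and then $R^{u's_0}=(R^{u'})^{s_0}\subseteq I^{s_0}=0$. Unwinding the recursion produces the required $\{c,m,n\}$-bounded $u$.

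The delicate points, and where I expect the real work to lie, are precisely the two claims in the inductive step: that $\gamma_c(M)$ is genuinely central in the \emph{associative} ring $R$ and not merely central for the Lie bracket in $M$, which is what allows the pigeonhole collapse of repeated central factors, and the bookkeeping needed to confirm that the nilpotency index of $I$, and hence the final exponent $u$, depends only on $c$, $m$ and $n$. Everything else becomes formal once the problem has been transferred into $R$.
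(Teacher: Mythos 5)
Your proof is correct and complete. Note first that the paper itself gives no argument for this lemma: it is quoted from \cite{KS}, so the only comparison possible is with that source, and your route --- induction on the nilpotency class, carried out inside the enveloping associative ring $R$ by factoring out the central ideal generated by $\mathrm{ad}(\gamma_c(H))$ --- is, in essence, the standard one; an equivalent Lie-theoretic packaging of the same induction filters $L$ by the $H$-invariant subgroups $[L,\gamma_c(H),\dots,\gamma_c(H)]$ and lets $H$ act on the successive quotients through $H/\gamma_c(H)$, which has class $c-1$. The two points you flag as delicate are both sound, and for a reason worth stating plainly: $\mathrm{ad}\colon H\to M$ is a surjective Lie-ring homomorphism onto $M\subseteq\mathrm{End}(L)$, and the bracket on $M$ \emph{is} the restriction of the associative commutator of $R$; hence $\gamma_c(M)=\mathrm{ad}(\gamma_c(H))$, and for $\zeta\in\gamma_c(H)$ one has $\mathrm{ad}(\zeta)\,\mathrm{ad}(h_i)-\mathrm{ad}(h_i)\,\mathrm{ad}(\zeta)=\mathrm{ad}([\zeta,h_i])=0$ since $\gamma_{c+1}(H)=0$, so $\gamma_c(M)$ commutes with the generators of $R$ and therefore with all of $R$. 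Likewise $\gamma_c(M)$ is additively spanned by the at most $m^c$ left-normed weight-$c$ commutators in the $\mathrm{ad}(h_i)$, each of which is $\mathrm{ad}$ of a commutator in the $h_i$ and hence nilpotent of index at most $n$; since these spanning elements are central they commute with one another, so your pigeonhole bound $I^{m^c(n-1)+1}=0$ is valid, and the recursion yields the explicit bound $u=\prod_{k=1}^{c}\left(m^k(n-1)+1\right)$, which is visibly $\{c,m,n\}$-bounded. One small hygiene point: phrase the induction hypothesis with ``class at most $c-1$'' rather than ``class exactly $c-1$'', since the Lie closure of the images of the generators in $R/I$ may have smaller class.
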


Recall that the identity $$\sum_{\sigma\in S_n}[y,x_{\sigma(1)},\ldots, x_{\sigma(n)}]\equiv 0$$ is called linearized $n$-Engel identity. In general, Theorem \ref{Z1992} cannot be extended to the case where $L$ is just a Lie ring (rather than a Lie algebra over a field). However such an extension does hold in the particular case where the polynomial identity $f\equiv 0$ is a linearized Engel identity.  More precisely, by combining Theorems \ref{BazaRing}, \ref{Z1} and Lemma \ref{lemmanovo}  the following result can be obtained. See \cite{shusa} for further details.

\begin{theorem}\label{Zelring}
Let $F$ be the free Lie ring and $f$ an element of $F$ (Lie polynomial) such that $f\not\in pF$ for any prime $p$. Suppose that $L$ is a Lie ring generated by finitely many elements $ a_1,\ldots, a_m$ such that all commutators in the generators are ad-nilpotent of index at most $n$. Assume that $L$ satisfies the identity $f\equiv 0$. Then $L$ is nilpotent with $\{f,m,n\}$-bounded class.
\end{theorem}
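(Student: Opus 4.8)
The plan is to reduce the statement to the case of Lie algebras over fields, where Theorem \ref{Z1} applies, and then to control the torsion that obstructs a direct passage. First I would apply Theorem \ref{BazaRing} with the trivial group $A=1$: then $C_L(A)=L$ satisfies $f\equiv0$, every subset is $A$-invariant, and the generators $a_1,\dots,a_m$ have the required ad-nilpotency. This yields $\{f,m,n\}$-bounded integers $e$ and $c$ with $e\gamma_c(L)=0$. Thus $W:=\gamma_c(L)$ is a bounded-torsion Lie ring, and only the primes dividing $e$ --- a bounded set --- can occur in its additive primary decomposition $W=\bigoplus_{p\mid e}W_p$, each $W_p$ being a characteristic ideal of $L$ of exponent dividing $p^{a_p}$ with $a_p=v_p(e)$ bounded.

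Next I would run Theorem \ref{Z1} modulo each relevant prime. For $p\mid e$ consider $L/pL$, a Lie algebra over $\mathbb F_p$ generated by the images of $a_1,\dots,a_m$, in which all commutators in the generators remain ad-nilpotent of index at most $n$. Since $L$ satisfies $f\equiv0$, so does $L/pL$; and here the hypothesis $f\notin pF$ is essential, for it guarantees that the reduction $\bar f$ of $f$ in the free Lie $\mathbb F_p$-algebra $F/pF$ is nonzero, so that $L/pL$ satisfies a genuinely nontrivial identity $\bar f\equiv0$. Theorem \ref{Z1} then makes $L/pL$ nilpotent of some class $d_p$; as $p\le e$ is bounded, $d:=\max_{p\mid e}d_p$ is $\{f,m,n\}$-bounded and $\gamma_{d+1}(L)\subseteq pL$ for every $p\mid e$. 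Lemma \ref{lemmanovo} is the natural device for keeping this passage between the Lie ring $L$ and its sections quantitative, matching the ad-nilpotency indices.

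The heart of the argument is an elementary but careful assembling of these prime-by-prime conclusions. From $\gamma_{d+1}(L)\subseteq pL$ and bilinearity one gets $\gamma_{c+d}(L)=[\gamma_{d+1}(L),{}_{c-1}L]\subseteq[pL,{}_{c-1}L]=p\gamma_c(L)=pW$, and then an induction gives
\[\gamma_{c+(j+1)d}(L)=[\gamma_{c+jd}(L),{}_{d}L]\subseteq[p^{j}W,{}_{d}L]=p^{j}\gamma_{c+d}(L)\subseteq p^{j+1}W,\]
so that $\gamma_{c+jd}(L)\subseteq p^{j}W$ for all $j\ge0$. Taking $j=a_p$ and using $p^{a_p}W_p=0$ while multiplication by $p$ is invertible on $W_q$ for $q\ne p$, we obtain $\gamma_{c+a_pd}(L)\subseteq p^{a_p}W=\bigoplus_{q\ne p}W_q$. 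Hence, with $N:=c+\bigl(\max_{p\mid e}a_p\bigr)d$, the ideal $\gamma_N(L)$ lies in $\bigcap_{p\mid e}\bigoplus_{q\ne p}W_q=0$, so $L$ is nilpotent of class less than $N$, an $\{f,m,n\}$-bounded number.

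The step I expect to be the main obstacle is obtaining a class bound that is uniform in the prime. This is exactly what Theorem \ref{BazaRing} buys us: without the bounded-torsion conclusion $e\gamma_c(L)=0$ one would have to reduce modulo every prime, and Theorem \ref{Z1} produces class bounds depending on the field $\mathbb F_p$, hence on $p$, which cannot be amalgamated. It is worth stressing that the $\mathbb F_p$-information is indispensable and cannot be replaced by passing to $L\otimes\mathbb Q$ alone: a nonzero Lie ring with $pL=0$ may well be non-nilpotent (for instance a free Lie algebra over $\mathbb F_p$ on two generators), while $L\otimes\mathbb Q=0$; so it is precisely the genuinely nontrivial reductions $\bar f\equiv0$, guaranteed by $f\notin pF$, that forbid such behaviour and let Theorem \ref{Z1} do its work at each relevant prime.
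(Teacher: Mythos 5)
Your proof is correct, and it is in substance the argument the paper intends (the paper itself only sketches this result, deferring to \cite{shusa}): apply Theorem \ref{BazaRing} with trivial $A$ to obtain $e\gamma_c(L)=0$ for $\{f,m,n\}$-bounded $e,c$, then use $f\notin pF$ exactly as you do --- to make each reduction $L/pL$, for the boundedly many primes $p\mid e$, satisfy a genuinely nontrivial identity over $\mathbb{F}_p$ so that Theorem \ref{Z1} yields a bounded class --- and your descent $\gamma_{c+jd}(L)\subseteq p^{j}\gamma_c(L)$, combined with the primary decomposition of $\gamma_c(L)$, assembles the prime-by-prime conclusions correctly. The one divergence worth noting is that the paper lists Lemma \ref{lemmanovo} among the ingredients, whereas your elementary assembly never actually invokes it (your passing mention of it does no work); this makes your write-up, if anything, slightly more economical than the recipe stated in the paper, since all you need beyond the two cited theorems is the arithmetic of bounded-exponent abelian groups.
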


      \section{On associated Lie rings}
Given a group $G$, there are several well-known ways to associate a Lie ring to it (see \cite{Huppert2,Khu1,aaaa}). For the reader's convenience we will briefly describe the construction that we are using in the present paper.

A series of subgroups for $G$ $$G=G_1\geq G_2\geq\cdots\eqno{(*)}$$ is called an $N$-series if it satisfies $[G_i,G_j]\leq G_{i+j}$ for all $i,j$. Obviously any $N$-series is central, i.e. $G_i/G_{i+1}\leq Z(G/G_{i+1})$ for any $i$. Given an $N$-series $(*)$, let $L^*(G)$ be the direct sum of the abelian groups $L_i^*=G_i/G_{i+1}$, written additively. Commutation in $G$ induces a binary operation $[,]$ in $L^*(G)$. For homogeneous elements $xG_{i+1}\in L_i^*,yG_{j+1}\in L_j^*$ the operation is defined by $$[xG_{i+1},yG_{j+1}]=[x,y]G_{i+j+1}\in L_{i+j}^*$$ and extended to arbitrary elements of $L^*(G)$ by linearity. It is easy to check that the operation is well-defined and that $L^*(G)$ with the operations $+$ and $[,]$ is a Lie ring. 

An $N$-series $(*)$ is called  an $N_p$-series if $G_i^p\leq G_{pi}$ for all $i$. An important example of an $N_p$-series is  the  case where the series $(*)$ is the $p$-dimension central series, also known under the name of Zassenhaus-Jennings-Lazard series (see \cite[p.\ 250]{Huppert2} for details).
Observe that if all quotients $G_i/G_{i+1}$ of an $N$-series $(*)$ have prime exponent $p$ then $L^*(G)$ can be viewed as a Lie algebra over the field with $p$ elements.

Any automorphism of $G$ in the natural way induces an automorphism of $L^*(G)$. If $G$ is profinite and $\alpha$ is a coprime automorphism of $G$, then the subring (subalgebra) of fixed points of $\alpha$ in $L^*(G)$ is isomorphic to the Lie ring (algebra) associated to the group $C_G(\alpha)$ via the series formed by intersections of $C_G(\alpha)$ with the series $(*)$ (see \cite{aaaa} for further details).

 In the case where the series $(*)$ is just the lower central series of $G$ we write $L(G)$ for the associated Lie ring. In the case where the series $(*)$ is the $p$-dimension central series of $G$ we write $L_p(G)$ for the subalgebra generated by the first homogeneous component $G_1/G_2$ in the associated Lie algebra over the field with $p$ elements. 

Let $H$ be a subgroup of $G$. For a series $(*)$  we write 
$$L^*(G,H)=\bigoplus_{j\geq 1}\dfrac{(G_j\cap H)G_{j+1}}{G_{j+1}}$$
and, if the series $(*)$ is the $p$-dimension central series, we write
$$ L_p(G,H)=L_p(G)\cap L^*(G,H).$$
In particular, if a group $A$ acts coprimely on $G$, then we have 
$$L^*(G,C_G(A))=C_{L^*(G)}(A)\ \mbox{and}\ L_p(G,C_G(A))=C_{L_p(G)}(A).$$

We will also require the following lemma that essentially is due to Wilson and Zelmanov (cf. \cite[Lemma in Section 3]{WZ}).
\begin{lemma}\label{leWZ} Let $G$ be a profinite group and $g\in G$ an element such that for any $x\in G$ there exists a positive $n$ with the property that $[x,{}_n\,g]=1$. Let $L^*(G)$ be the Lie algebra associated with $G$ using an $N_p$-series $(*)$ for some prime $p$. Then the image of $g$ in $L^*(G)$ is ad-nilpotent.
\end{lemma}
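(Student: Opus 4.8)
# Proof Proposal for Lemma \ref{leWZ}

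The plan is to exploit the defining property of the $N_p$-series together with the Engel-type condition on $g$ to control how the image $\bar{g}$ of $g$ acts on homogeneous components of $L^*(G)$. First I would set up notation: write $\bar{g}$ for the image of $g$ in some homogeneous component $L_s^* = G_s/G_{s+1}$ of the associated Lie algebra (assuming $g \in G_s \setminus G_{s+1}$, so that $\bar g$ is a nonzero homogeneous element), and recall that in the associated Lie algebra the adjoint action $\mathrm{ad}\,\bar g$ corresponds, on homogeneous components, to the map induced by left-normed group commutation $x \mapsto [x, g]$. The key observation is the compatibility between the Lie bracket $[\bar x, \bar g]$ in $L^*(G)$ and the group commutator $[x,g]$, which is exactly how the bracket on $L^*(G)$ was defined in the preceding construction.

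The crux of the argument is the following. Since the series is an $N_p$-series, we have $G_i^p \leq G_{pi}$, which forces the homogeneous components to have exponent $p$ and allows us to treat $L^*(G)$ as a Lie algebra over the field with $p$ elements. I would then show that for a fixed homogeneous $\bar x \in L_t^*$, the element $[\bar x, {}_m \bar g]$ lives in $L_{t+ms}^*$, and that it equals the image of the group commutator $[x, {}_m g]$ modulo the appropriate term of the series. The Engel hypothesis gives, for the specific pair $x, g$, some integer $N$ with $[x, {}_N g] = 1$ in $G$; the difficulty is that $N$ depends on $x$, so this alone does not bound $\mathrm{ad}\,\bar g$ uniformly. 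The essential point — and this is where the $N_p$-structure does the real work, following Wilson--Zelmanov — is that the index of ad-nilpotency of $\bar g$ can be bounded in terms of $p$ and the degree $s$ of $g$ alone, independently of $x$: once $m$ is large enough that $p^{\lfloor \log_p m \rfloor}$ exceeds the relevant threshold, the relation $G_i^p \le G_{pi}$ lets one replace iterated commutation by a $p$-power operation that collapses. Concretely, one argues that $[\bar x, {}_{p^k} \bar g]$ is related to $[\bar x, \bar g^{p^k}]$-type expressions, and since $g^{p^k}$ pushes $g$ deep into the series, the corresponding bracket vanishes in the graded object.

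The main obstacle I anticipate is precisely the passage from the element-dependent Engel bound $N = N(x,g)$ to a uniform ad-nilpotency index for $\bar g$ on all of $L^*(G)$. To handle this I would argue on a single homogeneous generator $\bar x$ at a time (linearity then extends the conclusion, since $\mathrm{ad}\,\bar g$ is a linear map and nilpotency on a spanning set of homogeneous elements — indeed on the degree-one component that generates the relevant subalgebra — suffices). For a fixed $\bar x$ arising from $x \in G$, the Engel condition supplies $[x, {}_N g] = 1$, and I would track this identity through the graded quotients to conclude that $(\mathrm{ad}\,\bar g)^N$ annihilates $\bar x$. The uniformity is then recovered either by the $N_p$-series collapse described above or, more robustly, by invoking that it suffices to prove $\bar g$ is ad-nilpotent (the \emph{index} need not be uniform for the statement as phrased, which only asserts ad-nilpotency), so that establishing $(\mathrm{ad}\,\bar g)^{N} = 0$ for a suitable $N$ — obtained by combining the finitely many bounds on a generating set via the $N_p$-property — completes the proof. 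The delicate bookkeeping lies in verifying that the vanishing of the group commutator $[x,{}_N g]$ translates faithfully into vanishing of $(\mathrm{ad}\,\bar g)^N \bar x$ in the graded Lie algebra, which requires care about which term $G_j$ each partial commutator lands in.
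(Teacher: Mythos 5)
Your proposal gets the easy half of the argument right: since the bracket on $L^*(G)$ is induced by group commutation, $[x,{}_{N}\,g]=1$ in $G$ does give $[\bar x,{}_{N}\,\bar g]=0$ for the homogeneous image $\bar x$ of $x$, and you correctly identify that the entire difficulty is that $N$ depends on $x$. But both of your proposed ways around that dependence fail, and the one idea that actually resolves it is absent. The paper does not reprove this lemma — it quotes it from Wilson--Zelmanov — and the proof there (the same pattern the present paper uses in Section 7, e.g.\ in Lemma \ref{lpi}) rests on the \emph{Baire category theorem}: the sets $S_n=\{x\in G:\,[x,{}_n\,g]=1\}$ are closed and cover the compact group $G$, so some $S_n$ has non-empty interior, i.e.\ there are a single $n$, an open (without loss of generality normal) subgroup $H$ and $a\in G$ with $[ah,{}_n\,g]=1$ for all $h\in H$. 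A collection/linearization argument then shows that $(\mathrm{ad}\,\bar g)^n$ annihilates the subalgebra $L^*(G,H)=\bigoplus_j (G_j\cap H)G_{j+1}/G_{j+1}$, which is $\mathrm{ad}\,\bar g$-invariant (normality of $H$) and of finite codimension in $L^*(G)$ (bounded by $\log_p[G:H]$); since $\mathrm{ad}\,\bar g$ raises degree and only finitely many degrees survive in $L^*(G)/L^*(G,H)$, finitely many further applications of $\mathrm{ad}\,\bar g$ push everything into $L^*(G,H)$, giving a uniform $N$ with $(\mathrm{ad}\,\bar g)^{N}=0$. This compactness step is the only source of uniformity, and nothing in your write-up replaces it.

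Concretely, your two substitutes are flawed. First, the ``$N_p$-collapse'': from $G_i^p\le G_{pi}$ one does get identities relating $(\mathrm{ad}\,\bar g)^{p^k}$ to $\mathrm{ad}\,(\overline{g^{p^k}})$, and $g^{p^k}$ indeed lies in $G_{p^k s}$, but lying deep in the filtration makes nothing vanish: $L^*(G)=\bigoplus_i G_i/G_{i+1}$ has components in arbitrarily high degrees, and $\overline{g^{p^k}}$ can be a nonzero element acting nontrivially there. (Your argument would be correct if $g$ were torsion of $p$-power order, so that $g^{p^k}=1$ — that is the well-known torsion analogue of this lemma — but here $g$ need not be torsion.) Second, the fallback that ``the index need not be uniform for the statement as phrased'' contradicts the paper's definition: $a\in L$ is ad-nilpotent precisely when there exists one $n$ with $[x,{}_n\,a]=0$ for \emph{all} $x\in L$, so a uniform index is exactly what must be produced. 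Moreover, the plan to ``combine the finitely many bounds on a generating set'' presupposes a finite generating set, which $L^*(G)$ does not have in general: $G$ is an arbitrary profinite group, the components $G_i/G_{i+1}$ can be infinite-dimensional over $\mathbb{F}_p$, and for a general $N_p$-series $L^*(G)$ is not generated by its degree-one part (that is a feature of $L_p(G)$, a different object). Even on a finitely generated subalgebra, pointwise ad-nilpotency on generators only bounds $(\mathrm{ad}\,\bar g)$ on commutators of each fixed weight, with the bound growing with the weight, so no uniform $N$ results.
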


We  close this section by quoting the following result
whose proof can be found in \cite{KS}.

\begin{theorem}\label{finiteL}
Let $P$ be a $d$-generated finte $p$-group and suppose that $L_p(G)$ is nilpotent of class $c$. Then  $P$  has a powerful characteristic subgroup of  $\{p,c,d\}$-bounded index.
\end{theorem}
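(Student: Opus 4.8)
The plan is to read off the structure of $P$ from the nilpotency of $L_p(P)$ by first controlling the homogeneous components of the full associated restricted Lie algebra, and then feeding the resulting bounds into the Lubotzky--Mann theory of powerful $p$-groups. Write $\{D_i\}$ for the $p$-dimension central series of $P$, so that $D_1=P$ and $D_2=\Phi(P)$, whence $D_1/D_2=P/\Phi(P)$. Since $P$ is $d$-generated, this first factor, viewed as a vector space over the field with $p$ elements, has dimension at most $d$, so $L_p(P)$ is generated by $d$ elements. As $L_p(P)$ is nilpotent of class $c$, its dimension is at most the number $B$ of basic commutators of weight at most $c$ in $d$ letters, which is a $\{c,d\}$-bounded quantity.

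The first substantial step is to promote this into a uniform bound on every factor $D_i/D_{i+1}$ of the full associated restricted Lie algebra $\mathcal L=\bigoplus_i D_i/D_{i+1}$. Here I would invoke the theorem of Jennings--Lazard that $\mathcal L$ is generated by $D_1/D_2$ as a \emph{restricted} Lie algebra, that is, using the Lie bracket together with the $p$-th power operation $x\mapsto x^{[p]}$, which raises degree $j$ to degree $pj$. The Lie-bracket part of $\mathcal L$ is precisely $L_p(P)$, which vanishes in every degree exceeding $c$; consequently each homogeneous component $D_i/D_{i+1}$ is spanned, modulo $L_p(P)$, by the images $\bigl(L_p(P)\cap(D_j/D_{j+1})\bigr)^{[p^k]}$ with $i=p^kj$ and $j\le c$. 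Because $x\mapsto x^{[p]}$ is $p$-semilinear modulo Lie terms, each such image has dimension at most $B$, and for a fixed $i$ the constraint $i/c\le p^k\le i$ allows only $\{p,c\}$-boundedly many exponents $k$. This yields a bound $\dim(D_i/D_{i+1})\le B'$ for all $i$, where $B'$ is a $\{p,c,d\}$-bounded number.

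Finally I would return from the Lie algebra to the group via the powerful $p$-group machinery of Lubotzky and Mann \cite{luma}. A finite $p$-group all of whose $p$-dimension central factors have dimension at most $B'$ has rank $r$ bounded in terms of $p$ and $B'$, hence in terms of $p,c,d$; and every finite $p$-group of rank $r$ contains a powerful characteristic subgroup whose index is bounded in terms of $p$ and $r$. Composing these two facts produces a powerful characteristic subgroup of $P$ of $\{p,c,d\}$-bounded index, which is exactly the assertion.

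I expect the genuine difficulty to reside in the two conversions of the last paragraph, where the Lubotzky--Mann theory is indispensable: passing from bounded $p$-dimension central factors to a bound on the rank is not elementary, and it is precisely for this passage that powerful $p$-groups enter. The supporting subtlety is the bookkeeping in the middle step, where one must ensure that the $p$-power operation of the restricted Lie algebra does not inflate the high-degree components; it is the $p$-semilinearity of $x\mapsto x^{[p]}$ that confines each component to $\{p,c,d\}$-bounded dimension even though $\mathcal L$, and hence $P$, may have unboundedly many nonzero components and unbounded order.
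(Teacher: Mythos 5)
The paper itself does not prove this statement at all --- it is quoted from \cite{KS} --- so your attempt has to be judged as mathematics on its own, and it contains a genuine gap. Your first two steps are fine: $L_p(P)$ has $\{c,d\}$-bounded dimension, and since $\bigoplus_i D_i/D_{i+1}$ is generated by $D_1/D_2$ under brackets and the $p$-power map, every factor $D_i/D_{i+1}$ has $\{c,d\}$-bounded rank. The fatal step is the first claim of your last paragraph: it is \emph{false} that a finite $p$-group whose $p$-dimension central factors all have rank at most $B'$ has $\{p,B'\}$-bounded rank, even for $2$-generated groups, and no such theorem is in \cite{luma}. Take $W_n=C_p\wr C_{p^n}=B\rtimes\langle t\rangle$, where the base group $B$ is identified with $\mathbb{F}_p[\langle t\rangle]$ and $\omega$ denotes its augmentation ideal. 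Then $\gamma_i(W_n)=B\omega^{i-1}$ for $i\geq 2$, and a direct computation with $D_i=\prod_{jp^k\geq i}\gamma_j(W_n)^{p^k}$ gives $D_i=B\omega^{i-1}\rtimes\langle t^{\,p^{\lceil\log_p i\rceil}}\rangle$, so every factor of the Zassenhaus--Jennings--Lazard series of $W_n$ has rank at most $2$. Yet $W_n$ contains the elementary abelian subgroup $B$ of rank $p^n$. If $K\leq W_n$ were powerful of index $p^e$, then $d(K)\leq p^e+1$ by the Schreier bound, so every subgroup of $K$ would be generated by at most $p^e+1$ elements (the Lubotzky--Mann property of powerful groups recalled in Section 3 of this paper), whereas $B\cap K$ needs at least $p^n-e$ generators; hence $e\to\infty$ with $n$, and $W_n$ has no powerful subgroup of index bounded in terms of $p$ and the width. (The pro-$p$ incarnation of the same phenomenon is the Nottingham group: finitely generated and of finite width, yet not $p$-adic analytic.) The second half of your claim --- rank $r$ implies a powerful characteristic subgroup of $\{p,r\}$-bounded index --- is indeed a theorem of \cite{luma}, but the ``width implies rank'' half does not exist.

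This cannot be repaired within your scheme, because replacing the hypothesis ``$L_p(P)$ is nilpotent of class $c$'' by its consequence ``all dimension factors have rank at most $B'$'' discards almost all of its strength: for $W_n$ the width is $2$, but the class of $L_p(W_n)$ is roughly $p^n$, since the commutators $[\bar b,\bar t,\dots,\bar t]$ represent the elements $b^{(t-1)^{i}}$ and remain nonzero up to weight $p^n$. This is exactly why $W_n$ does not contradict the theorem, and exactly why it does contradict your intermediate claim. A correct argument must use the uniform class bound $c$ itself, not merely the width it implies. For instance, class at most $c$ forces $\gamma_i(P)\leq D_{i+1}$ for every $i>c$, and then Jennings' formula $D_i=\gamma_i(P)D_{\lceil i/p\rceil}^p$ collapses, for all $i>c$, to $D_i=D_{\lceil i/p\rceil}^p$: beyond degree $c$ the dimension subgroups are generated by $p$-th powers, which ties the power structure of $P$ to the filtration in a way no width bound can. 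It is consequences of this kind that the actual proof in \cite{KS}, which the present paper simply quotes rather than reproves, is built on.
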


Remind that powerful $p$-groups were introduced by Lubotzky and Mann in \cite{luma}.  A finite $p$-group $P$ is said to be powerful if and only if $[P,P]\leq P^{p}$ for $p \neq 2$ (or $[P,P]\leq P^{4}$ for $p=2$), where  $P^{i}$  denotes   the subgroup of $P$ generated by all $i$th powers.  Powerful  $p$-groups have some nice properties. In particular, if $P$ is a powerful $p$-group, then the subgroups $\gamma_{i}(P), P^{(i)}$ and $P^{i}$ are also powerful. Moreover, for given positive integers $n_{1},\ldots,n_{s}$, it follows, by repeated applications of \cite[Propositions 1.6 and 4.1.6]{luma}, that
$$[P^{n_{1}},\ldots,P^{n_{s}}]\leq \gamma_{s}(P)^{n_{1}\cdots n_{s}}.$$
Furthermore if a powerful $p$-group $P$ is generated by $d$ elements, then any subgroup of $P$ can be generated by at most $d$ elements and $P$ is a product of $d$ cyclic subgroups, that is, $P$ has cyclic subgroups $C_1,\dots,C_d$ with the property that for every element $x\in P$ there exist $x_1\in C_1,\dots,x_d\in C_d$ such that $x=x_1\dots x_d$.

\section{On a technical tool: $A$-special subgroups}
The main step in order to deal with the proof of  part (2) of Theorems \ref{aa1} and \ref{aa2} is to consider  the case where $G$ is a $p$-group, which can be treated via Lie methods. Then the general case will follows from a reduction to the case of $p$-groups.  We will deal with the case of $p$-groups by combining   Lie methods with the use of  the technical concept of $A$-special subgroups of  a group $G$. This concept was introduced in \cite{PS-CA}. In what follows, we are going to provide the reader with the most relevant informations on that topic.  Let us start by recalling the definition.

\begin{definition}
Let $A$ be an elementary abelian $q$-group acting on a finite $q'$-group $G$. Let $A_1,\ldots,A_s$ be the maximal subgroups of $A$ and $H$ a subgroup of $G$. 

$(1)$ We say that $H$ is an $A$-special subgroup of $G$ of degree $0$ if and only if  $H=C_G(A_i)$ for suitable $i\leq s$.

$(2)$Suppose that $k\geq 1$ and the $A$-special subgroups of $G$ of degree $k-1$ are defined. Then $H$ is an $A$-special subgroup of $G$ of degree $k$ if and only if   there exist $A$-special subgroups $J_1, J_2$ of $G$ of degree $k-1$ such that  $H=[J_1, J_2]\cap C_G(A_i)$ for suitable $i\leq s$.
\end{definition}

Note that the $A$-special subgroups of $G$ of any degree are $A$-invariant. If $A$ has order $q^r$, then for given integer $k$ the number of $A$-special subgroups of $G$ of degree $k$ is bounded in terms of $q,r$ and $k$. Moreover the $A$-special subgroups have nice properties that are crucial for our purpose. We state here some of those properties whose proofs can be found in \cite[Proposition 3.2, Theorem 4.1 and Corollary 4.4]{PS-CA}.

\begin{proposition}\label{propPS-CA2}
Let $A$ be an elementary abelian $q$-group of order $q^r$ with $r\geq 2$ acting on a finite $q'$-group $G$ and $A_1,\ldots,A_s$ be the maximal subgroups of $A$. Let $k\geq 0$ be an integer.

$(1)$ If $k\geq 1$, then every $A$-special subgroup of $G$ of degree $k$ is contained in some $A$-special subgroup of $G$ of degree $k-1$.

$(2)$ If $2^k\leq r-1$ and $H$ is an $A$-special subgroup of $G$ of degree $k$, then $H$ is contained in the $k$th derived group of $C_G(B)$ for some subgroup $B\leq A$ such that $|A/B|\leq q^{2^k}.$ 

 $(3)$ Let $R_k$ be the subgroup generated by all $A$-special subgroups of $G$ of degree $k$. Then $R_k=G^{(k)}$.

$(4)$ Let $H$ be an $A$-special subgroup of $G$. If $N$ is an $A$-invariant normal subgroup of $G$ then the image of $H$ in $G/N$ is an $A$-special subgroup of $G/N$.
\end{proposition}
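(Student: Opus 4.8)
The plan is to prove all four assertions simultaneously by induction on the degree $k$, feeding off the recursive definition of $A$-special subgroups and the two standard coprime-action facts recalled in the introduction: that $C_{G/N}(\alpha)=C_G(\alpha)N/N$ for $A$-invariant normal $N$, and that an $A$-invariant section splits as the subgroup generated by the fixed-point subgroups $C_G(A_1),\dots,C_G(A_s)$ of the maximal subgroups of $A$ (this is where $r\geq 2$ enters). Since every $A$-special subgroup, and hence every commutator of two of them, is $A$-invariant, coprime generation is available at each step. Part $(1)$ is the most direct. For $k=1$ an $A$-special subgroup $H=[J_1,J_2]\cap C_G(A_i)$ with $J_1,J_2$ of degree $0$ already lies in the degree-$0$ subgroup $C_G(A_i)$. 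For $k\geq 2$, writing $H=[J_1,J_2]\cap C_G(A_i)$ with $J_1,J_2$ of degree $k-1$, the inductive hypothesis gives $A$-special subgroups $W_1,W_2$ of degree $k-2$ with $J_1\leq W_1$ and $J_2\leq W_2$, so that $H\leq [W_1,W_2]\cap C_G(A_i)$, which is $A$-special of degree $k-1$ by definition.

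Part $(2)$ follows the same skeleton. For $k=0$ put $B=A_i$, so $C_G(B)^{(0)}=C_G(A_i)=H$ and $|A/B|=q=q^{2^0}$. For the step, with $H=[J_1,J_2]\cap C_G(A_i)$ and $J_1,J_2$ of degree $k-1$ (note $2^{k-1}\leq r-1$, so the hypothesis applies), I obtain subgroups $B_1,B_2\leq A$ with $|A/B_j|\leq q^{2^{k-1}}$ and $J_j\leq C_G(B_j)^{(k-1)}$. Taking $B=B_1\cap B_2$ gives $|A/B|\leq q^{2^{k-1}}q^{2^{k-1}}=q^{2^k}$, and since $B\leq B_j$ forces $C_G(B_j)\leq C_G(B)$, I get $H\leq [C_G(B_1)^{(k-1)},C_G(B_2)^{(k-1)}]\leq [C_G(B)^{(k-1)},C_G(B)^{(k-1)}]=C_G(B)^{(k)}$. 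The constraint $2^k\leq r-1$ is exactly what guarantees $q^{2^k}\leq q^{r-1}<|A|$, so that $B$ remains a proper subgroup and the conclusion is meaningful.

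Part $(4)$ I would again treat by induction on $k$. For $k=0$ the image of $H=C_G(A_i)$ is $C_G(A_i)N/N=C_{G/N}(A_i)$ by the coprime-action property, an $A$-special subgroup of degree $0$ of $G/N$. For the step, write $H=C_{[J_1,J_2]}(A_i)$ with $J_1,J_2$ of degree $k-1$, set $M=[J_1,J_2]\cap N$ (an $A$-invariant normal subgroup of $[J_1,J_2]$), and apply the coprime-action property to $A_i$ acting on $[J_1,J_2]/M$. Identifying $[J_1,J_2]/M$ with $[J_1,J_2]N/N=[\,\overline{J_1},\overline{J_2}\,]$ via the second isomorphism theorem carries $C_{[J_1,J_2]}(A_i)M/M$ onto the image $HN/N$ and $C_{[J_1,J_2]/M}(A_i)$ onto $[\,\overline{J_1},\overline{J_2}\,]\cap C_{G/N}(A_i)$, so these coincide. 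As $\overline{J_1},\overline{J_2}$ are $A$-special of degree $k-1$ in $G/N$ by induction, the image of $H$ is $A$-special of degree $k$.

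Part $(3)$ carries the real weight, and I would save it for last. The inclusion $R_k\leq G^{(k)}$ is immediate: each degree-$k$ generator lies inside some $[J_1,J_2]$ with $J_1,J_2$ of degree $k-1$, and by induction $J_1,J_2\leq R_{k-1}=G^{(k-1)}$, whence $[J_1,J_2]\leq G^{(k)}$; the base case $R_0=G=G^{(0)}$ is just coprime generation by the $C_G(A_i)$. For the reverse inclusion, coprime generation applied to the $A$-invariant subgroup $[J_1,J_2]$ yields $[J_1,J_2]=\langle C_{[J_1,J_2]}(A_i):i\rangle$, each factor $C_{[J_1,J_2]}(A_i)=[J_1,J_2]\cap C_G(A_i)$ being a degree-$k$ $A$-special subgroup; hence $R_k=\langle [J_1,J_2]: J_1,J_2 \text{ of degree } k-1\rangle$. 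Writing $G^{(k-1)}=\langle J_1,\dots,J_t\rangle$ and using that the derived subgroup of a group generated by subgroups is the normal closure of the subgroup generated by their pairwise commutators, I obtain $G^{(k)}=\bigl\langle [J_a,J_b]\bigr\rangle^{G^{(k-1)}}=R_k^{\,G^{(k-1)}}$. The proof thus reduces to showing that $R_k$ is normal in $R_{k-1}=G^{(k-1)}$, and I expect this normality to be the main obstacle: a direct conjugation argument stalls because $[G^{(k-1)},R_k]\leq R_k$ is itself equivalent to the desired normality. I would overcome it using the coprime commutator-generation machinery of \cite{PS-CA}, which expresses commutators of $A$-invariant subgroups through their fixed-point subgroups and forces $R_k$ to be invariant under each $C_G(A_i)$, hence under $G=\langle C_G(A_i)\rangle$. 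Granting normality, $G^{(k)}=R_k^{\,G^{(k-1)}}=R_k$ closes the induction.
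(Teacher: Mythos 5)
Your arguments for parts (1), (2) and (4) are correct; note that the paper itself never proves this proposition but cites it from \cite[Proposition 3.2, Theorem 4.1 and Corollary 4.4]{PS-CA}, and your inductions for those three parts are essentially the standard ones. Your reduction of part (3) is also sound as far as it goes: the inclusion $R_k\leq G^{(k)}$, the observation via coprime generation that $[J_1,J_2]=\langle\, [J_1,J_2]\cap C_G(A_i) : i\leq s\,\rangle\leq R_k$ (so that $R_k=\langle [J_a,J_b]:a,b\rangle$), and the identity $G^{(k)}=\langle [J_a,J_b]:a,b\rangle^{G^{(k-1)}}$ (valid provided you include $a=b$, i.e.\ the derived subgroups of the $J_a$, which the definition of $A$-special subgroups does allow) are all correct, so part (3) is indeed equivalent to $R_k$ being normal in $G^{(k-1)}$.

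But that normality is exactly the hard content of the proposition, and your proposal does not prove it. The appeal to ``the coprime commutator-generation machinery of \cite{PS-CA}'' is circular: this proposition \emph{is} that machinery, and there is no background lemma that ``forces $R_k$ to be invariant under each $C_G(A_i)$.'' Conjugation by $g\in C_G(A_j)$ destroys both ingredients of a special subgroup --- $J^g$ need not be $A$-invariant, and $C_G(A_i)^g\neq C_G(A_i)$ --- which is precisely why no direct conjugation argument is known. In \cite{PS-CA} the hard inclusion $G^{(k)}\leq R_k$ is not obtained through normality at all: the Sylow factorization theorem (Theorem \ref{teoPS-CA2} here, Theorem 4.1 there) is proved \emph{first}, by an induction on the order of $G$ using Glauberman-type facts about $A$-invariant Sylow subgroups, and part (3) is then deduced from it: $G^{(k)}$ is generated by its $A$-invariant Sylow $p$-subgroups $P$ (one for each prime $p$ dividing $|G^{(k)}|$), and each such $P$ factors as $P=P_1\cdots P_t$ with $P_i=P\cap H_i\leq R_k$, whence $G^{(k)}\leq R_k$; normality of $R_k$ is a consequence, not an ingredient. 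So to complete your argument you would either have to reprove that theorem or restructure part (3) so that it follows from Theorem \ref{teoPS-CA2}; as written, part (3) has a genuine gap at its crucial step.
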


\begin{theorem}\label{teoPS-CA2}
Let $A$ be an elementary abelian $q$-group of order $q^r$ with $r\geq 2$ acting on a finite $q'$-group $G$.  Let $p$ be a prime and   $P$ an $A$-invariant Sylow $p$-subgroup of $G^{(d)}$, for some  integer $d\geq 0$. Let $P_1,\ldots, P_t$ be  the subgroups of the form $P\cap H$ where $H$ ranges through $A$-special subgroups of $G$ of degree $d$. Then $P=P_1\cdots P_t$.
\end{theorem}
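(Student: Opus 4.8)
The plan is to argue by induction on $|G|$, using repeatedly the coprime-action facts quoted in the Introduction together with Proposition~\ref{propPS-CA2}. Two preliminary observations guide the argument. The first is a purely local statement: if $A$ acts coprimely on a finite $p$-group $Q$, then $Q=\prod_i C_Q(A_i)$, the product being taken over the maximal subgroups $A_1,\dots,A_s$ of $A$. I would prove this by induction on $|Q|$: a minimal $A$-invariant subgroup $Z$ of $Z(Q)$ is an elementary abelian module over the field $k$ with $p$ elements, so decomposing it over an algebraically closed extension of $k$ into characters (each nontrivial character of $A$ having a maximal subgroup as its kernel) gives $Z=\prod_i C_Z(A_i)$; one then lifts through $Q/Z$ and absorbs $Z$, using that $Z$ is central. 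The second observation, needed to run the main induction, is that each $A$-special subgroup of degree $d$ is contained in $G^{(d)}$ (immediate from the definition by induction on the degree), so that each $P\cap H_j$ is genuinely a $p$-subgroup of $G^{(d)}$.

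Next I would set up the inductive step. Let $N$ be a minimal $A$-invariant normal subgroup of $G$ and write bars for images in $\overline G=G/N$. The key point is that the $A$-special subgroups of $\overline G$ of degree $d$ are exactly the images $\overline{H_j}$. That the images are among them is Proposition~\ref{propPS-CA2}(4); for the converse I would show, by induction on the degree, that the bar map commutes with the defining operations. Centralizers cause no trouble since $C_{\overline G}(A_i)=\overline{C_G(A_i)}$ by coprimality, and commutators satisfy $\overline{[J_1,J_2]}=[\overline{J_1},\overline{J_2}]$; the only subtlety is the intersection with a centralizer, which is handled by the coprime identity
$$XN\cap C_G(A_i)N=(X\cap C_G(A_i))N,$$
valid for any $A$-invariant subgroup $X$ (it follows from $C_{XN}(A_i)\subseteq C_X(A_i)N$, itself a consequence of $C_{XN/N}(A_i)=C_{XN}(A_i)N/N$). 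Granting this correspondence, the inductive hypothesis applied in $\overline G$ gives $\overline P=\prod_j(\overline P\cap\overline{H_j})$, and lifting this factorisation back to $P$, together with the coprime comparison $P\cap H_jN=(P\cap H_j)(P\cap N)$, yields $P=(P_1\cdots P_t)(P\cap N)$.

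It then remains to absorb $P\cap N$. If $N$ is a $p'$-group, or more generally if $N\cap G^{(d)}=1$, then $P\cap N=1$ and we are done. Since any characteristic subgroup of $N$ is again $A$-invariant and normal in $G$, minimality forces $N$ to be elementary abelian whenever it is a $p$-group, and $N\cap G^{(d)}$ equals $1$ or $N$; so the only surviving case is that $N$ is an elementary abelian $p$-group contained in $G^{(d)}$. In that case $N$, being a normal $p$-subgroup of $G^{(d)}$, lies in every Sylow $p$-subgroup, so $N\leq P$ and $P\cap N=N$.

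The hard part is precisely this last case: showing that the normal elementary abelian subgroup $N\leq P$ is already absorbed, i.e. that $(P_1\cdots P_t)N=P_1\cdots P_t$. Here I would exploit the module structure of $N$ to write $N=\prod_i C_N(A_i)$ as in the first observation, and then show that each homogeneous piece $C_N(A_i)=N\cap C_G(A_i)$ is captured by the intersections $P\cap H_j$, using Proposition~\ref{propPS-CA2}(1) (every $A$-special subgroup of degree $d$ sits inside one of degree $d-1$, ultimately inside some centralizer $C_G(A_i)$) together with Proposition~\ref{propPS-CA2}(3) (the degree-$d$ subgroups generate $G^{(d)}\supseteq N$). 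This reconciliation of the global generation of $G^{(d)}$ by the $H_j$ with the local module decomposition of $N$ inside the fixed Sylow subgroup $P$ — carried out so that the separate intersections $P\cap H_j$ already suffice to cover $N$ — is where essentially all the difficulty of the theorem is concentrated, and it is the step I expect to require the most delicate bookkeeping with coprime actions.
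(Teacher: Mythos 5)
The paper itself contains no proof of Theorem~\ref{teoPS-CA2}: it is imported, together with Proposition~\ref{propPS-CA2}, from \cite{PS-CA}, so your outline can only be judged on its own merits. Its opening moves are sound: the reduction modulo a minimal $A$-invariant normal subgroup $N$, and the two-way correspondence between $A$-special subgroups of $G$ and of $G/N$, are correct as you present them; in particular your identity $XN\cap C_G(A_i)N=(X\cap C_G(A_i))N$ does hold for $A$-invariant $X$, because taking $A_i$-fixed points is exact under coprime action. The problems begin immediately afterwards.

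First, the ``coprime comparison'' $P\cap H_jN=(P\cap H_j)(P\cap N)$ is asserted with no justification, and it is not a formal consequence of coprimality: unlike the centralizer identity, there is no fixed-point functor attached to $H_j$. When $N\le P$ it is just Dedekind's modular law; but when $P\cap N=1$ (for instance $N$ a $p'$-group) it says $P\cap H_jN\subseteq H_j$, whereas a $p$-element of $H_jN$ a priori lies only in some $N$-conjugate of a Sylow $p$-subgroup of $H_j$. Making this true requires machinery you never invoke: that $A$-invariant Sylow $p$-subgroups of $H_jN$ are conjugate under $C_{H_jN}(A)$ (Glauberman), that $C_{H_jN}(A)=C_{H_j}(A)C_N(A)$, and, crucially, that every $A$-special subgroup is normalized by $C_G(A)$ (an induction on degree starting from $C_G(A_i)^g=C_G(A_i)$ for $g\in C_G(A)$); for an arbitrary $A$-invariant subgroup in place of $H_j$ the identity is simply false, so quoting it as a generic coprime fact is a genuine gap, and exactly this gap is what makes your phrase ``we are done'' in the $P\cap N=1$ case unwarranted. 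Second, your case analysis of $N$ is incomplete: a minimal $A$-invariant normal subgroup need not be a $p$-group or a $p'$-group (it can be a direct product of nonabelian simple groups, of mixed order), and if $G$ has no proper nontrivial $A$-invariant normal subgroup at all, the induction produces nothing, while for $d\ge1$ and $G$ perfect the statement is still nontrivial there (e.g.\ a faithful coprime action of $C_q\times C_q$ on a direct power of a nonabelian simple group with the factors permuted transitively). Finally, the case you yourself identify as carrying ``essentially all the difficulty'' --- absorbing $N$ when $N$ is an elementary abelian $p$-subgroup of $P$ --- is left entirely as a declaration of intent. So what you have is a plausible skeleton whose load-bearing steps are all missing; it does not yet prove the theorem.
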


In order to deal  with the statements (1) of Theorems \ref{aa1} and \ref{aa2} we will need to use  the concept of $\gamma$-$A$-special subgroups of a group $G$, whose definition was also given in \cite{PS-CA}. They are analogues to  $A$-special subgroups defined above, and their definition is  more suitable to treat  situations involving the terms of the lower central series of a group. The definition is as follows. 
\begin{definition}\label{gamma-special}
Let $A$ be an elementary abelian $q$-group acting on a finite $q'$-group $G$. Let $A_1,\ldots,A_s$ be  the maximal subgroups of $A$ and $H$ a subgroup of $G$. 

$(1)$ We say that $H$ is a $\gamma$-$A$-special subgroup of $G$ of degree $1$ if and only if $H=C_G(A_i)$ for suitable $i\leq s$.

$(2)$ Suppose that $k\geq 2$ and the $\gamma$-$A$-special subgroups of $G$ of degree $k-1$ are defined. Then $H$ is a $\gamma$-$A$-special subgroup of $G$ of degree $k$ if and only if   there exist a $\gamma$-$A$-special subgroup $J$ of $G$ of degree $k-1$ such that  $H=[J, C_G(A_i)]\cap C_G(A_j)$ for suitable $i, j \leq s$.
\end{definition}

The next  results are similar to Proposition \ref{propPS-CA2} and Theorem \ref{teoPS-CA2} respectively, and   their proofs can be found in \cite{PS-CA}.

\begin{proposition}\label{propPS-CA}
Let $A$ be an elementary abelian $q$-group of order $q^r$ with $r\geq 2$ acting on a finite $q'$-group $G$ and $A_1,\ldots,A_s$ the maximal subgroups of $A$. Let $k\geq 1$ be an integer.

$(1)$ If $k\geq 2$, then every $\gamma$-$A$-special subgroup of $G$ of degree $k$ is contained in some $\gamma$-$A$-special subgroup of $G$ of degree $k-1$.

$(2)$ If $k\leq r-1$ and $H$ is a $\gamma$-$A$-special subgroup of $G$ of degree $k$, then $H\leq \gamma_{k}(C_G(B))$ for some subgroup $B\leq A$ such that $|A/B|\leq q^k.$ 

$(3)$ Let $R_k$ be the subgroup generated by all $\gamma$-$A$-special subgroups of $G$ of degree $k$. Then $R_k=\gamma_k(G)$.

$(4)$  Let $H$ be a $\gamma$-$A$-special subgroup of $G$. If $N$ is an $A$-invariant normal subgroup of $G$, then the image of $H$ in $G/N$ is a $\gamma$-$A$-special subgroup of $G/N$.
\end{proposition}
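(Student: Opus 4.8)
The plan is to establish all four parts by induction on the degree $k$, closely following the pattern used for Proposition \ref{propPS-CA2} but replacing the commutators $[J_1,J_2]$ by the lower-central-style products $[J,C_G(A_i)]$ appearing in Definition \ref{gamma-special}. Throughout I would use freely the two coprime-action facts recalled in the introduction: $C_{G/N}(\alpha)=C_G(\alpha)N/N$ for $A$-invariant normal $N$, and the generation of $G$ by the centralizers $C_G(A_i)$ of its maximal subgroups (since for elementary abelian $A$ of rank $r\geq 2$ the subgroups $B\leq A$ with $A/B$ cyclic are exactly $A$ and the $A_i$, while $C_G(A)\leq C_G(A_i)$).

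For part $(1)$ the base case $k=2$ is immediate: if $H=[J,C_G(A_i)]\cap C_G(A_j)$ with $J$ of degree $1$, then $H\leq C_G(A_j)$, a $\gamma$-$A$-special subgroup of degree $1$. For $k\geq 3$ I would apply the inductive hypothesis to $J$ (of degree $k-1\geq 2$) to obtain a $\gamma$-$A$-special subgroup $\tilde J$ of degree $k-2$ with $J\leq\tilde J$; then $H\leq[\tilde J,C_G(A_i)]\cap C_G(A_j)$, and the latter is $\gamma$-$A$-special of degree $k-1$ by construction. For part $(2)$ the base case $k=1$ is settled by taking $B=A_i$. In the inductive step, writing $H=[J,C_G(A_i)]\cap C_G(A_j)$ with $J$ of degree $k-1$, the inductive hypothesis furnishes $B'\leq A$ with $|A/B'|\leq q^{k-1}$ and $J\leq\gamma_{k-1}(C_G(B'))$. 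Setting $B=B'\cap A_i$ gives $|A/B|\leq q^{k-1}\cdot q=q^k$, and from $B\leq B'$ and $B\leq A_i$ we get $C_G(B')\leq C_G(B)$ and $C_G(A_i)\leq C_G(B)$, whence
\[
H\leq[J,C_G(A_i)]\leq[\gamma_{k-1}(C_G(B')),C_G(A_i)]\leq[\gamma_{k-1}(C_G(B)),C_G(B)]=\gamma_k(C_G(B)).
\]
The hypothesis $k\leq r-1$ serves precisely to guarantee $|A/B|\leq q^{r-1}<|A|$, so that $B\neq 1$ and $C_G(B)\leq C_G(a)$ for any $1\neq a\in B$.

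For part $(3)$ one inclusion is easy: an induction in the spirit of $(1)$ shows every $\gamma$-$A$-special subgroup of degree $k$ lies in $\gamma_k(G)$, so $R_k\leq\gamma_k(G)$. For the reverse inclusion I would induct on $k$, the case $k=1$ being the generation statement $G=\langle C_G(A_1),\dots,C_G(A_s)\rangle$. Assuming $\gamma_{k-1}(G)=R_{k-1}$ is generated by the degree-$(k-1)$ subgroups $J$, I would write $\gamma_k(G)=[\gamma_{k-1}(G),G]$ and use normality of $\gamma_{k-1}(G)$ together with $G=\langle C_G(A_i)\rangle$ and $\gamma_{k-1}(G)=\langle J\rangle$ to express $\gamma_k(G)$ as generated by the subgroups $[J,C_G(A_i)]$. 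Each such subgroup is $A$-invariant, so applying the coprime generation fact inside it gives $[J,C_G(A_i)]=\langle\, [J,C_G(A_i)]\cap C_G(A_j) : j\leq s\,\rangle$, since $C_{[J,C_G(A_i)]}(A_j)=[J,C_G(A_i)]\cap C_G(A_j)$. As the generators on the right are exactly $\gamma$-$A$-special subgroups of degree $k$, this yields $[J,C_G(A_i)]\leq R_k$ and hence $\gamma_k(G)\leq R_k$.

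Part $(4)$ is again an induction on degree via $C_{G/N}(A_i)=C_G(A_i)N/N$: in degree $1$ the image of $C_G(A_i)$ is $C_{G/N}(A_i)$, and in degree $k$ the images of $[J,C_G(A_i)]$ and $C_G(A_j)$ are $[\bar J,C_{G/N}(A_i)]$ and $C_{G/N}(A_j)$, with $\bar J$ a degree-$(k-1)$ $\gamma$-$A$-special subgroup of $G/N$ by induction. The one genuinely delicate point, which I expect to be the main obstacle throughout, is that passing to a quotient does not in general commute with intersections, so I must verify
\[
\bigl([J,C_G(A_i)]\cap C_G(A_j)\bigr)N/N=\bigl([J,C_G(A_i)]N/N\bigr)\cap C_{G/N}(A_j).
\]
This equality, and implicitly the analogous control of intersections lurking behind the generation step of part $(3)$, is exactly where coprimality is indispensable: one proves it using the standard coprime lemma that for an $A$-invariant subgroup $M$ one has $C_{MN/N}(A_j)=C_M(A_j)N/N$, so that an $A_j$-fixed element of $[J,C_G(A_i)]N/N$ can be lifted to an $A_j$-fixed element of $[J,C_G(A_i)]$. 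Carefully assembling these lifting arguments, rather than running the routine inductions, is the part that will require real attention.
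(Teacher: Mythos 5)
Your treatments of parts (1), (2) and (4) are correct; note, for the record, that the paper never proves Proposition \ref{propPS-CA} at all but refers the reader to \cite{PS-CA}, so the only possible comparison is with that source. The genuine gap is in part (3), at the step where you claim that normality of $\gamma_{k-1}(G)$, together with $\gamma_{k-1}(G)=\langle J_\alpha\rangle$ and $G=\langle C_G(A_i)\rangle$, lets you ``express $\gamma_k(G)$ as generated by the subgroups $[J_\alpha,C_G(A_i)]$''. That is not a valid general principle: if $M=\langle X_\alpha\rangle$ and $N=\langle Y_\beta\rangle$, then $[M,N]$ is only the \emph{normal closure} in $\langle M,N\rangle$ of the subgroups $[X_\alpha,Y_\beta]$, and conjugates cannot be dispensed with here, because $[J_\alpha,C_G(A_i)]^g=[J_\alpha^g,C_G(A_i)^g]$ involves $J_\alpha^g$ and $C_G(A_i)^g$, which are no longer subgroups of the form required by Definition \ref{gamma-special}. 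The correct routine statement is only a congruence modulo the next term of the lower central series: writing $S_k=\langle [J_\alpha,C_G(A_i)]:\alpha,i\rangle$, one has $\gamma_k(G)=S_k\,\gamma_{k+1}(G)$. Since $R_k\leq S_k\leq\gamma_k(G)$ (each degree-$k$ subgroup $[J_\alpha,C_G(A_i)]\cap C_G(A_j)$ lies in $[J_\alpha,C_G(A_i)]$), your unproved generation claim $S_k=\gamma_k(G)$ is in fact equivalent to the proposition itself, so invoking it is circular rather than a shortcut.

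What your argument genuinely yields, after iterating the congruence and using $R_{k+1}\leq R_k$ (a consequence of part (1)), is $\gamma_k(G)=R_k\,\gamma_m(G)$ for every $m\geq k$, hence $\gamma_k(G)=R_k\,\gamma_\infty(G)$, where $\gamma_\infty(G)$ is the stable term of the lower central series. This settles part (3) when $G$ is nilpotent, but the proposition concerns an arbitrary finite $q'$-group, for which $\gamma_\infty(G)$ may be nontrivial; showing $\gamma_\infty(G)\leq R_k$ is precisely the remaining, and main, content of part (3), and your proof never touches it. Note also that your closing diagnosis points at the wrong place: the intersection-versus-quotient issue you single out is handled perfectly well by the coprime lemma $C_{MN/N}(A_j)=C_M(A_j)N/N$, both in part (4) and inside part (3); it is the commutator-generation step, which you treat as routine bookkeeping, that breaks. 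Closing it requires a further idea beyond the ``modulo $\gamma_{k+1}(G)$'' computation --- for instance an induction on $|G|$ that passes to $G/N$ for a nontrivial $A$-invariant normal subgroup $N$ and uses part (4) together with its converse (every $\gamma$-$A$-special subgroup of $G/N$ of degree $k$ is the image of one of $G$, provable by the same coprime lifting) to get $\gamma_k(G)\leq R_kN$, after which the case where no such $N$ lies inside $R_k$ must still be dealt with. This extra work is exactly why the paper defers the proof to \cite{PS-CA} instead of treating the statement as routine.
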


\begin{theorem}\label{teoPS-CA}
Let $A$ be an elementary abelian $q$-group of order $q^r$ with $r\geq 2$ acting on a finite $q'$-group $G$.  Let $p$ be a prime and   $P$ an $A$-invariant Sylow $p$-subgroup of $\gamma_{r-1}(G)$. Let $P_1,\ldots, P_t$ be all the subgroups of the form $P\cap H$ where $H$  ranges through $\gamma$-$A$-special subgroups of $G$ of degree $r-1$. Then $P=P_1\cdots P_t$.
\end{theorem}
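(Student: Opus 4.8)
The plan is to argue by induction on $|G|$. Write $N=\gamma_{r-1}(G)$ and let $H_1,\dots,H_m$ be the (finitely many, $A$-invariant) $\gamma$-$A$-special subgroups of $G$ of degree $r-1$. Each of them lies in $N$: by Definition \ref{gamma-special} the degree-$1$ ones are $C_G(A_i)\le G=\gamma_1(G)$, and an easy induction on the degree shows that every bracketing step raises the lower central index, so a degree-$k$ subgroup lies in $\gamma_k(G)$; in particular each $H_i\le\gamma_{r-1}(G)=N$. By Proposition \ref{propPS-CA}(3) these subgroups generate $N$. Put $P_i=P\cap H_i$ and $X=P_1\cdots P_m$, a product of subgroups that is a priori only a subset of $P$. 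The goal is to prove $X=P$; the real content is that the \emph{product}, and not merely the generated subgroup $\langle P_1,\dots,P_m\rangle$, already fills out $P$.

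First I would pass to a quotient. Choose a minimal $A$-invariant subgroup $M$ that is normal in $G$. By Proposition \ref{propPS-CA}(4) the images $H_iM/M$ are exactly the $\gamma$-$A$-special subgroups of $\overline G=G/M$ of degree $r-1$, while $\gamma_{r-1}(\overline G)=\overline N$ and $\overline P=PM/M$ is an $A$-invariant Sylow $p$-subgroup of $\overline N$. Using the coprime identity $C_{\overline G}(\alpha)=C_G(\alpha)M/M$ together with the modular law to identify $\overline P\cap\overline{H_i}$ with $\overline{P_i}$, the inductive hypothesis applied to $\overline G$ yields $\overline P=\prod_i\overline{P_i}$, that is, $P=XM$. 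Thus the induction reduces everything to absorbing the single normal subgroup $M$ into the product $X$.

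It remains to carry out this absorption. If $M\not\le N$, then $M\cap N$ is an $A$-invariant $G$-normal subgroup properly contained in $M$, so $M\cap N=1$ and $[M,N]\le M\cap N=1$; hence the quotient map is injective on $N$ and the decomposition of $\overline P$ pulls back verbatim to $P=X$. If $M\le N$ is a $p'$-group, then $P\cap M=1$, so $|P|=|\overline P|$ and again $P=X$. The decisive case is $M\le N$ a $p$-group: here $M$ is elementary abelian and normal in $P$, and one must show $M\subseteq X$. I would reduce this to the assertion $M=\prod_i(M\cap H_i)$, a decomposition of the chief factor $M$ itself along the special subgroups, since each $M\cap H_i\le P\cap H_i=P_i$ and $M$ is normal in $P$, so such a decomposition of $M$ embeds into $X$ and gives $P=XM=X$. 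Establishing $M=\prod_i(M\cap H_i)$ is where I expect the \emph{main obstacle} to lie: it is precisely the point at which the product assertion, rather than mere generation, has to be extracted, and it should be handled by exploiting that $M$ is a completely reducible module for $A$ over the field of $p$ elements (coprimeness and Maschke), that $N=\langle H_1,\dots,H_m\rangle$, and that $M$ is normal in $G$. This is in the same spirit as the base configuration $r=2$, where for a $p$-group $P$ one obtains $P=\prod_j C_P(A_j)$ by peeling off a central $A$-invariant subgroup and reabsorbing it through commutativity and the abelian decomposition; the difficulty in the present generality is to run that absorption through the more elaborate family of $\gamma$-$A$-special subgroups.
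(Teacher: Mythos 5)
There is no proof in the paper to compare against: Theorem \ref{teoPS-CA} is imported wholesale from \cite{PS-CA} (``their proofs can be found in \cite{PS-CA}''), so your attempt must stand on its own, and it does not. The step you yourself label the \emph{main obstacle} --- proving $M=\prod_i(M\cap H_i)$ for a minimal $A$-invariant normal $p$-subgroup $M\le N$ --- is precisely where the substance of the theorem sits, and you leave it unproved; a reduction that defers this is a plan, not a proof. Worse, even granting that claim, your conclusion ``$P=XM=X$'' is a non sequitur: $X=P_1\cdots P_m$ is only a product \emph{set} (that it is closed under multiplication is exactly what the theorem asserts), so $M\subseteq X$ does not give $XM\subseteq X$. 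Concretely, writing an element of $P$ as $x_1\cdots x_m\,n_1\cdots n_m$ with $x_i\in P_i$ and $n_i\in M\cap H_i$, you cannot re-sort the $n_i$ past the $x_j$: conjugation preserves $M$ but not $M\cap H_i$ (the $H_i$ are not normal), so what is actually needed is a decomposition of the form $M=\prod_i\bigl(M\cap H_i^{z_i}\bigr)$ along the specific conjugating elements $z_i=x_{i+1}\cdots x_m$, a different and harder statement.

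The inductive scaffolding has two further gaps. First, the identification $\overline P\cap\overline{H_i}=\overline{P_i}$ does not follow from ``the coprime identity together with the modular law'': for arbitrary subgroups it is simply false, even in the presence of a coprime action (in $S_3$ take $P=\langle(1\,2)\rangle$, $H=\langle(1\,3)\rangle$, $M=A_3$; then $\overline P\cap\overline H$ has order $2$ while $\overline{P\cap H}$ is trivial). The coprime identity does handle the factor $C_G(A_j)$ in $H_i=[J,C_G(A_i)]\cap C_G(A_j)$, but the commutator factor is not a centralizer; to push the identification through one genuinely needs coprime Sylow theory (every $A$-invariant $p$-subgroup of $PM$ lies in an $A$-invariant Sylow $p$-subgroup, and any two such are conjugate under fixed points of $A$) together with the observation that $C_G(A)$ normalizes every $\gamma$-$A$-special subgroup --- none of which appears in your argument. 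Second, your case analysis of the minimal normal subgroup ($M\not\le N$; $M\le N$ a $p'$-group; $M\le N$ elementary abelian $p$-group) tacitly assumes solvability. The group $G$ is only a finite $q'$-group, so $M$ may be a direct product of nonabelian simple groups of order divisible by $p$; in that case $P\cap M$ is a nontrivial, non-normal Sylow $p$-subgroup of $M$, and your trichotomy does not cover it.
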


Since Theorems \ref{bb1} and \ref{bb2} are (non-quantitative) profinite versions of Theorems \ref{aa1} and \ref{aa2} respectively, it is natural to expect that the main step of their proofs is to consider the case of pro-$p$ groups, which
will be treated  by using Lie methods.  For our purpose we  also need to  extend   the  concepts of $A$-special and $\gamma$-$A$-special subgroups to profinite groups.

Let $H,K$ be subgroups  of a profinite group $G$. Remind that we denote by $[H,K]$ the closed  subgroup of $G$ generated by all commutators of the form $[h,k]$, with $h\in H$ and $k\in K$. In  the same spirit of  what was done in \cite{PS-CA}  for the finite case, we can define the concept of $A$-special subgroups for a profinite group as follows.
\begin{definition}\label{def-special-profi}
Let $A$ be an elementary abelian $q$-group acting coprimely on a profinite group $G$. Let $A_1,\ldots,A_s$ be the maximal subgroups of $A$ and $H$ a subgroup of $G$.

$(1)$ We say that $H$ is an $A$-special subgroup of $G$ of degree $0$ if and only if  $H=C_G(A_i)$ for suitable $i\leq s$.

$(2)$ Suppose that $k\geq 1$ and the $A$-special subgroups of $G$ of degree $k-1$ are defined. Then $H$ is an $A$-special subgroup of $G$ of degree $k$ if and only if   there exist $A$-special subgroups $J_1, J_2$ of $G$ of degree $k-1$ such that  $H=[J_1, J_2]\cap C_G(A_i)$ for suitable $i\leq s$.
\end{definition}

Note that combining the definition above with a standard  inverse limit argument and the results obtained in \cite{PS-CA}, it is easy to show that $A$-special subgroups of a profinite group satisfy properties analogous to those listed in Proposition \ref{propPS-CA2}. Moreover a profinite version of  Theorem  \ref{teoPS-CA2} holds.

In order to deal with part (1) of Theorems \ref{bb1} and \ref{bb2} we need to introduce $\gamma$-$A$-special subgroups of a profinite group. This is done by slightly modifying Definition \ref{def-special-profi} in a similar way to what is stated in Definition \ref{gamma-special} for the finite case. As a consequence we obtain that analogous profinite versions of Proposition \ref{propPS-CA} and Theorem \ref{teoPS-CA}  can be established. We omit further details. 

 \section{Proof of Theorem \ref{aa1}} 
Our goal here is to prove Theorem \ref{aa1}. 
First of all we
%
need to establish the following result about associated Lie rings. 
\begin{proposition}\label{propaa1}
Let $G$ be a finite group satisfying the hypothesis of Theorem \ref{aa1}$(2)$. Suppose that there exists   an $A$-invariant $p$-subgroup $H$ of $G^{(d)}$, with $p$  a prime divisor of the order of $G^{(d)}$, such that $H=H_1\cdots H_t$, where each subgroup $H_i$ is contained in some $A$-special subgroup of $G$ of degree $d$ and $H$ is generated by a $\{q,r,t\}$-bounded number of elements. Then:

$(1)$ $L_p(H)$ is nilpotent of $\{n,p,q,r,t\}$-bounded class.

$(2)$ There exist positive integers $e,c$ depending only $n,q,r$ and $t$, such that $e\gamma_c(L(H))=0$.
\end{proposition}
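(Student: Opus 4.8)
The plan is to realize both conclusions as instances of the Lie-theoretic criteria already at our disposal: part $(1)$ from Corollary \ref{ZelBazaField} applied to the $\mathbb{F}_p$-Lie algebra $L_p(H)$, and part $(2)$ from Theorem \ref{BazaRing} applied to the Lie ring $L(H)$, in each case with $A$ acting. Since $p$ divides the order of $G^{(d)}$ and $G$ is a $q'$-group, we have $p\neq q$, so the characteristic is prime to $|A|=q^r$ and the coprimality hypotheses of those results hold. In both settings it then remains to verify two things: that $L$ is generated by an $A$-invariant set of $\{q,r,t\}$-boundedly many elements in which every commutator is ad-nilpotent of $n$-bounded index, and that $C_L(A)$ satisfies a polynomial identity of bounded degree.

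The common engine is a single Engel observation. By Proposition \ref{propPS-CA2}$(2)$ the hypothesis $2^d\leq r-1$ guarantees that every $A$-special subgroup of $G$ of degree $d$ lies in $C_G(B)^{(d)}\leq C_G(a)^{(d)}$ for a suitable $a\in A^\#$, and by the hypothesis of Theorem \ref{aa1}$(2)$ all elements of $C_G(a)^{(d)}$ are $n$-Engel in $G$. Feeding in Proposition \ref{propPS-CA2}$(1)$, that every $A$-special subgroup of degree $\geq d$ sits inside one of degree $d$, I obtain the clean statement that every element of an $A$-special subgroup of degree $\geq d$ is $n$-Engel in $G$. Applying this to each $H_i$ and invoking Lemma \ref{leWZ}, the homogeneous images in $L$ of elements of the $H_i$ are ad-nilpotent of index $\leq n$; taking the $A$-orbit of a bounded generating set of $H$ drawn from $\bigcup_i H_i$ furnishes the required $A$-invariant generating set of $L$ of $\{q,r,t\}$-bounded size.

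The two remaining points — ad-nilpotency of all commutators in these generators and a bounded identity on $C_L(A)$ — both reduce, via the coprime action of $A$, to controlling the fixed-point subalgebras $C_L(A_j)$ for $A_j$ a maximal subgroup of $A$ (together with $C_L(A)$ itself). In part $(1)$ I would pass to weight components over $\overline{\mathbb{F}_p}$: since $A$ is elementary abelian of exponent $q$, any nontrivial character has kernel of index $q$, so every homogeneous commutator of weight $\chi$ lies in $C_L(\ker\chi)$ with $\ker\chi$ equal to $A$ or to a maximal subgroup, while $C_L(A)$ satisfies the linearized $n$-Engel identity as soon as each of its elements is ad-nilpotent of index $\leq n$ (by a Vandermonde polarization over the extended field). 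The decisive and most delicate step is therefore to show that the fixed-point subgroups $C_H(A_j)=H\cap C_G(A_j)$ consist of $n$-Engel elements; here the hypothesis $H=H_1\cdots H_t$ with each $H_i$ inside an $A$-special subgroup of degree $d$ must be used to dissect $C_H(A_j)$ through the $A$-special subgroups and the Engel-controlled centralizers $C_G(a)^{(d)}$. I expect this to be the main obstacle, precisely because being $n$-Engel is not inherited by products, so $C_H(A_j)$ cannot be declared Engel merely because it is built from Engel pieces; the difficulty is overcome only after transferring the problem to the associated Lie ring, where Bahturin--Zaicev turns the Engel condition on the fixed points into exactly the polynomial identity on $C_L(A)$ that Corollary \ref{ZelBazaField} and Theorem \ref{BazaRing} demand. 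With both hypotheses secured, Corollary \ref{ZelBazaField} yields the $\{n,p,q,r,t\}$-bounded nilpotency class of $L_p(H)$ in part $(1)$, and Theorem \ref{BazaRing} yields integers $e,c$ depending only on $n,q,r,t$ with $e\gamma_c(L(H))=0$ in part $(2)$, the prime $p$ dropping out because the Lie-ring statement involves no ground field.
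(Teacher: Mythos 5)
Your skeleton coincides with the paper's (Corollary \ref{ZelBazaField} for part (1), Theorem \ref{BazaRing} for part (2), coprimality from $p\neq q$, generators of $L$ drawn from the images of $\bigcup_i H_i$), and you correctly isolate the two hypotheses that must be verified. But there is a genuine gap at exactly the point you yourself call ``the main obstacle'': you never establish that every commutator in the chosen generating set is ad-nilpotent of bounded index, and the patch you offer cannot close the hole. Bahturin--Zaicev (Theorems \ref{blz}, \ref{bZ1}) converts a polynomial identity on $C_L(A)$ into a polynomial identity on $L$; it is categorically incapable of producing ad-nilpotency of individual commutators, which is a separate, independent hypothesis of Corollary \ref{ZelBazaField} and of Theorem \ref{BazaRing}. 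Your intended reduction --- show that $C_H(A_j)=H\cap C_G(A_j)$ consists of $n$-Engel elements --- is also not merely ``delicate'' but unavailable: the hypothesis of Theorem \ref{aa1}(2) controls $C_G(a)^{(d)}$, not $H\cap C_G(A_j)$, and an element of $C_H(A_j)$ is a product of pieces from the various $H_i\subseteq C_G(a_i)^{(d)}$, while neither the Engel condition nor ad-nilpotency survives products or sums. The same defect undermines your route to the identity on $C_L(A)$ (``every element of $C_L(A)$ is ad-nilpotent of index $\leq n$, then polarize''), since an element of $C_L(A)$ is again a sum of homogeneous pieces of uncontrolled type.

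The paper resolves both issues without proving anything about $C_H(A_j)$. For the identity on fixed points it works at the group level: for $y_1,\ldots,y_{2^d}\in C_H(a)$ the element $\delta_d(y_1,\ldots,y_{2^d})$ lies in $C_H(a)^{(d)}\leq C_G(a)^{(d)}$ and hence is $n$-Engel in $G$, so $C_H(a)$ satisfies the group identity $[y,{}_n\,\delta_d(y_1,\ldots,y_{2^d})]\equiv 1$, whose linearization then holds in $C_L(a)=L(H,C_H(a))\supseteq C_L(A)$ --- no ad-nilpotency of sums is ever needed there. For the commutators, the paper introduces \emph{special subspaces} $W$ of $L$ mirroring the $A$-special subgroups, shows inductively via Proposition \ref{propPS-CA2}(1),(2) that each consists of images of elements of some $C_G(a)^{(d)}$, extends scalars to $\mathbb{Z}[\omega]$ (respectively $\mathbb{F}_p[\omega]$), and proves the decisive claim that every element $w=l_0+\omega l_1+\cdots+\omega^{q-2}l_{q-2}$ of $\overline{W}$ is ad-nilpotent of $\{n,q\}$-bounded index. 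The engine is precisely the pair of results your plan never invokes: the subring $K=\langle l_0,\omega l_1,\ldots,\omega^{q-2}l_{q-2}\rangle$ sits inside $\overline{L(H,H_0)}$, which satisfies the linearized $n$-Engel identity because the corresponding group elements are $n$-Engel \emph{in all of} $G$ (this is where ``Engel in $G$'' rather than ``Engel in $C_G(a)$'' is essential), so Theorem \ref{Zelring} makes $K$ nilpotent of bounded class and Lemma \ref{lemmanovo} then yields $[\overline{L},{}_u\,K]=0$ for a bounded $u$. Finally the generators are taken to be common eigenvectors for $A$ over the extended scalars, so that each commutator in them lies not merely in some $C_{\overline{L}}(A_l)$ (which, as you saw, is useless) but in $[\overline{W_1},\overline{W_2}]\cap C_{\overline{L}}(A_l)\subseteq\overline{[W_1,W_2]\cap C_L(A_l)}$, i.e.\ in the scalar extension of a single special subspace, where the claim applies. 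Without this mechanism for handling sums of ad-nilpotent elements, your argument cannot be completed.
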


We now deal with the proof of the second statement of Proposition \ref{propaa1}.
\begin{proof}
Let $L=L(H)$  be the Lie ring associated   with  the $p$-subgroup $H$ of $G^{(d)}$. Denote by  $V_1,\ldots, V_t$ the images of $H_1,\ldots, H_t$  in $H/{\gamma_2(H)}$. It follows that the Lie ring $L$ is generated by $V_1,\ldots,V_t$. 

Since $H$ is $A$-invariant, the group $A$ acts  on $L$ in the natural way. 
Let $A_1,\ldots, A_s$ be the distinct maximal subgroups of $A$. 
Let $W$ be an additive subgroup of $L$. We say that $W$ is a \textit{special  subspace} of weight $1$ if  and only if $W=V_j$ for some $j\leq t$ and say that $W$ is a special subspace of weight $\alpha\geq 2$ if $W=[W_1,W_{2}]\cap C_L(A_l)$, where $W_1, W_{2}$ are some special  subspaces of $L$ of weight $\alpha_{1}$ and $\alpha_{2}$ such that $\alpha_{1}+\alpha_{2}=\alpha$ and $A_l$ is some maximal subgroup of $A$ for a suitable $l\leq s$. 

We claim that every special subspace $W$ of $L$ corresponds to a subgroup of an $A$-special subgroup of $G$ of degree $d$. We argue by induction on the weight $\alpha$ of $W$. If $\alpha=1$, then $W=V_j$  and so $W$ corresponds to $H_j$ for some $j\leq t$. Assume that $\alpha\geq 2$ and write $W=[W_1,W_{2}]\cap C_L(A_l)$. By induction we know that $W_1, W_{2}$ correspond respectively to some $J_1, J_{2}$ which are subgroups of some $A$-special subgroups of $G$ degree $d$. 

Note also that $[W_1,W_{2}]$ is contained in the image of 
$[J_1,J_{2}]$. This implies that the special  subspace $W$ corresponds to a subgroup of $[J_1, J_{2}]\cap C_G(A_l)$ which, by Proposition \ref{propPS-CA2}(1), is contained in some $A$-special subgroup of $G$ of degree $d$, as claimed. 
Moreover it  follows from Proposition \ref{propPS-CA2}(2) that every  element of $W$ corresponds to some element of $C_G(a)^{(d)}$ for a suitable $a\in A^{\#}$.
Therefore,
\begin{equation}\label{Claim2}
\mbox{every element of}\ W\ \mbox{is ad-nilpotent}\ \mbox{of index at most}\ n,
\end{equation}
since all elements of $C_{G}(a)^{(d)}$ are $n$-Engel in $G$.

From the previous argument we deduce that $L=\left<V_1,\ldots, V_t\right>$  is generated by ad-nilpotent elements of index at most $n$, but we do not know whether every Lie commutator in these generators is again in some special subspace of $L$ and hence it is ad-nilpotent of bounded index. 
In order to overcome this problem we take  a $q$th primitive root of unity $\omega$ and put  $\overline{L}=L\otimes \mathbb{Z}[\omega]$. We regard $\overline{L}$ as a Lie ring and remark that there is a natural embedding of the ring $L$ into the ring $\overline{L}$.   In what follows we write $\overline{V}$ to denote $V\otimes \mathbb{Z}[\omega]$, for some subspace $V$ of $L$.

Let $W$ be a special  subspace of $L$. We claim that
\begin{equation}\label{Claim1}
\begin{aligned}
& \text{there exists a}\, \{n,q\}\text{-bounded number}\, u\, \text {such that every }\\
& \text{element}\,w\, \text{of}\, \overline{W}\,\text{is ad-nilpotent of index at most}\, u. \\ 	
\end{aligned}
\end{equation}
Indeed, chose $w\in \overline{W}$ and write
$$w=l_0+\omega l_{1}+\cdots+\omega^{q-2}l_{q-2},$$ for suitable  elements $l_0,\ldots,l_{q-2}$  of $W$ which  in particular correspond to some  elements $x_0,\ldots,x_{q-2}$ of $C_{G}(a)^{(d)}$, for a suitable $a\in A^{\#}$.  
Let denote  by $K=\langle l_{0}, \omega l_{1},\ldots, \omega^{q-2}l_{q-2}\rangle$ the subring of $\overline{L}$ generated by 
$ l_{0}, \omega l_{1},\ldots, \omega^{q-2}l_{q-2}$ and put $H_0=\langle x_0,\ldots,x_{q-2} \rangle$.  We will show that $K$ is nilpotent of $\{n,q\}$-bounded class. Note that $L(H,H_0)$ satisfies the linearized  $n$-Engel identity and, so, the same identity is also satisfied in $\overline{L(H,H_0)}$ which contains $K$.  Observe that 
a commutator in the elements $l_{0}, \omega l_{1},\ldots, \omega^{q-2}l_{q-2}$  is of the form $\omega^{\alpha}v$ for some $v\in W$, and so, by (\ref{Claim2}), it  is ad-nilpotent of index at most $n$. Hence, by Theorem \ref{Zelring}, $K$  is nilpotent of $\{n,q\}$-bounded class. Now Lemma \ref{lemmanovo} 
ensures that there exists a positive integer $u$, depending only on $n$ and $q$, such that $[\overline{L},_u K]=0$. Since  $w\in K$, we conclude that $w$ is ad-nilpotent in $\overline{L}$ with $\{n,q\}$-bounded index, as claimed in (\ref{Claim1}).

The group $A$ acts  on $\overline{L}$ in the natural way.
An element $x\in \overline{L}$ will be called a common ``eigenvector'' for $A$ if for any $a\in A^{\#}$ there exists a number $\lambda$ such that $x^a=\omega^\lambda x$. Since $(|A|,|G|)=1$ and $H$ can be generated by a $\{q,r,t\}$-bounded number of elements, we can choose elements $v_{1},\ldots,v_{\tau}$ in $\overline{V_{1}}\cup\cdots \cup\overline{V_{t}}$,  that generated   the Lie ring $\overline{L}$, where $\tau$ is a $\{q,r,t\}$-bounded number, and each $v_i$ is a common eigenvector for $A$  (see for example \cite[Lemma 4.1.1]{Khu1}). 

Let $v$ be any Lie commutator in $v_1,\ldots,v_\tau$. We wish to show that $v$ belongs to some $\overline{W}$, where $W$ is a special  subspace of $L$. We argue by induction on the weight of $v$. If $v$ has weight $1$ there is nothing to prove. Assume $v$ has weight at least $2$. Write $v=[w_1,w_2]$ for some $w_1\in \overline{W_1}$ and $w_2\in \overline{W_2}$, where $W_1, W_2$ are two special  subspace of $L$ of smaller weights. It is clear that $v$ belongs to $[\overline{W_1}, \overline{W_2}]$. Note that any commutator in common eigenvectors  is again a common eigenvector for $A$. Therefore $v$ is a common eigenvector and  it follows that there exists some maximal subgroup   $A_l$ of $A$ such that $v\in C_{\overline{L}}(A_l)$. Thus $v\in[\overline{W_1}, \overline{W_{2}} ]\cap C_{\overline{L}}(A_l)$. Hence $v$ lies in $\overline{W}$, where $W$ is the special  subspace of $L$ of the form $[{W_1}, W_{2}]\cap C_{{L}}(A_l)$ and so by (\ref{Claim1}), $v$ is ad-nilpotent of  index at most $u$. This proves that 
\begin{equation}
\mbox{any commutator in}\ v_1,\ldots,v_\tau\ \mbox{is ad-nilpotent of index at most }\, u.
\end{equation}

Note that  for any $a\in A^{\#}$, the centralizer $C_L(a)=L(H,C_H(a))$ sa\-tisfies the linearized version of the identity $[y,_n \delta_{2^{d}}(y_1,\ldots,y_{2^d}) ]\equiv0$, where  $\delta_{i}(y_{1},\ldots,y_{2^{i}})$ is given recursively by $$\delta_{0}(y_{1})=y_{1},\quad \delta_{i}(y_{1},\ldots,y_{2^{i}})=[\delta_{i-1}(y_{1},\ldots,y_{2^{i-1}}),\delta_{i-1}(y_{2^{i-1}+1},\ldots,y_{2^{i}})]$$ for any $i\geq 1$.  
The same identity also holds in $C_{\overline{L}}(a)=\overline{C_L(a)}$. Thus, by Theorem \ref{BazaRing}, there exist positive integers $e, c$ depending only on $n,q,r$ and $t$ such that $e\gamma_c(\overline{L})=0$. Since $L$ embeds into $\overline{L}$, we also have $e\gamma_c({L})=0$, as desired.
\end{proof}

Note that the proofs of the items (1) and (2) of  the previous proposition are very similar. As for the proof of item (1), we only observe that it can be obtained, with obvious changes, simply by replacing every appeal to Theorem \ref{BazaRing} in the proof of (2) by an appeal to Corollary \ref{ZelBazaField}.

We are now ready to embark on the proof of part (2) of Theorem \ref{aa1}. 
 \begin{proof}  By  Proposition \ref{propPS-CA2}(3)  we know that $G^{(d)}$ is generated by $A$-special subgroups of $G$ of degree $d$ and Proposition \ref{propPS-CA2}(2) tells us that any $A$-special subgroup of $G$ of degree $d$ is contained in $C_G(B)^{(d)}$ for some suitable nontrivial subgroup $B\leq A$ such that $|A/B|\leq q^{r-1}$. Thus  each $A$-special subgroup of $G$ of degree $d$ is contained in $C_G(a)^{(d)}$ for some suitable $a\in A^{\#}$. This implies that $G^{(d)}$ is generated by $n$-Engel elements.  Hence  by Baer's Theorem  \cite[III  6.14]{Huppert1}  we get that $G^{(d)}$ is nilpotent.  Then $G^{(d)}$ is a direct product of its Sylow subgroups.  

Let $\pi(G^{(d)})$ be the set of prime divisors of $|G^{(d)}|$.  
Choose now $p\in \pi(G^{(d)})$ and let $P$ be  the Sylow $p$-subgroup of $G^{(d)}$. By Theorem \ref{teoPS-CA2}, we have $P=P_1\cdots P_t$, where each $P_i$ is of the form $P\cap H$ for some $A$-special subgroup $H$ of $G$ of degree $d$. Combining this with Proposition \ref{propPS-CA2}(2) we see that each $P_i$ is contained in $C_G(a)^{(d)}$, for some $a\in A^{\#}$. Furthermore $t$ is a $\{q,r\}$-bounded number.
 
Choose arbitrarily $x,y\in P$. Let us write $x=x_{1}\cdots x_{t}$ and $y=y_{1}\cdots y_{t}$, where $x_{i}$ and $y_{i}$ belong to $P_{i}$. In what follows we will show that $\langle x,y\rangle$ is nilpotent of $\{n,q,r\}$-bounded class.  Let $Y$ be the subgroup  generated by the orbits $x_{i}^{A}$ and $y_{i}^{A}$, for $i=1,\ldots,t$. Note that $Y$ is generated by a $\{q,r\}$-bounded number of elements. Since the subgroup $\langle x,y\rangle$ is contained in  $Y$, it is enough to show that $Y$ is nilpotent of $\{n,q,r\}$-bounded class. 

Set $Y_{i}=P_{i}\cap Y$, for $i=1,\ldots,t$  and note that every $Y_{i}$ is a subgroup of $C_{G}(a)^{(d)}$ for a suitable $a\in A^{\#}$. Since $Y=\langle x_{i}^{A}, y_{i}^{A} \mid i=1, \ldots, t\rangle$ and every $P_{i}$ is an $A$-invariant subgroup  we have $Y=\langle Y_{1}, \ldots, Y_{t}\rangle$. By \cite[Lemma 2.1]{PS-CA}
 we see that $Y=Y_{1}\cdots Y_{t}$.  Moreover note  that $Y$ is generated by a $\{q,r\}$-bounded number of elements which are
$n$-Engel.

Now by  Proposition \ref{propaa1}(2) there exist integers $e,c$, that depend only on $n,q$ and $r$, such that $e\gamma_{c}(L(Y))=0$.  If $p$ is not a divisor of $e$, then we have $\gamma_{c}(L(Y)))=0$ and so $Y$ is nilpotent of class at most $c-1$. In that case  $Y$ is nilpotent of $\{n,q,r\}$-bounded class and, in particular, the same holds for $\langle x,y\rangle$.  Assume now that $p$ is a divisor of $e$. By Proposition \ref{propaa1}(1) we know that $L_{p}(Y)$ is nilpotent of $\{n,q,r\}$-bounded class. Now Theorem \ref{finiteL} tell us that $Y$ has a powerful characteristic subgroup $K$ of $\{n,q,r\}$-bounded index. It follows from \cite[6.1.8(ii), p.\ 164]{Rob} that  $K$ has   a $\{n,q,r\}$-bounded rank.  

Put $R=K^{e}$ and assume that $R\neq 1$. Note that, if $p\neq 2$, then we have
$$[R,R]\leq [K, K]^{e^{2}}\leq K^{pe^{2}}=R^{pe} ,$$
and if $p=2$, then we have
$$[R,R]\leq R^{4e}.$$

Since $e\gamma_{c}(L(R))=0$, we get that  $\gamma_{c}(R)^{e}\leq \gamma_{c+1}(R)$.  Taking into account that $R$ is powerful we obtain, if $p\neq 2$, that
$$\gamma_{c}(R)^{e}\leq \gamma_{c+1}(R)=[R',_{c-1} R]\leq [R^{pe},_{c-1} R]\leq \gamma_{c}(R)^{pe}$$
  and, if $p=2$,  that
$$\gamma_{c}(R)^{e}\leq \gamma_{c}(R)^{4e}.$$
Hence $\gamma_{c}(R)^{e}=1$.  Since $\gamma_{c}(R)$  is also powerful and generated by a $\{n,q,r\}$-bounded number of elements, we conclude that $\gamma_{c}(R)$ is of $\{n,q,r\}$-bounded order, since it is a product of  a $\{n,q,r\}$-bounded number of cyclic subgroups. It follows that $R$ has a $\{n,q,r\}$-bounded  derived length. Remind that $R=K^{e}$ and $K$ is a powerful $p$-group.  Thus,  $K$ has $\{n,q,r\}$-bounded derived length and  this implies that the derived length of $Y$  is $\{n, q, r\}$-bounded, as well.
Now 
\cite[Lemma 4.1]{shusa}  tell us  that $Y$ has $\{n,q,r\}$-bounded nilpotency class, and the same holds for  $\langle x,y\rangle$, as desired.  

From the  argument above we deduce that each Sylow $p$-subgroup of $G^{(d)}$ is $k$-Engel, for some $\{n,q,r\}$-bounded number $k$. The result follows.
\end{proof}

We conclude this section observing that the proof of part  (1) of Theorem \ref{aa1}  has a  very similar structure  to  that of part (2). We will omit details and describe only main steps that are somewhat different from those of part (2). More precisely the first step consists in  proving the following analogue of Proposition \ref{propaa1}.

\begin{proposition}
Let $G$ be a finite group satisfying the hypothesis of Theorem \ref{aa1}(1).  Suppose that there exists an $A$-invariant $p$-subgroup $H$ of $\gamma_{r-1}(G)$, with $p$  a prime divisor of the order of $\gamma_{r-1}(G)$, such that $H=H_1\cdots H_t$, where each subgroup $H_i$ is contained in some $\gamma$-$A$-special subgroup of $G$ of degree $r-1$ and $H$ is generated by a $\{q,r,t\}$-bounded number of elements. Then:

$(1)$ $L_p(H)$ is nilpotent of $\{n,p,q,r,t\}$-bounded class.

$(2)$ There exist positive integers $e,c$ depending only $n,q, r$ and $t$, such that $e\gamma_c(L(H))=0$.
\end{proposition}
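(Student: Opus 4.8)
The plan is to transcribe the proof of Proposition \ref{propaa1}(2) almost verbatim, replacing the derived-series data by the corresponding lower-central-series data: the subgroup $H$ now lies in $\gamma_{r-1}(G)$, the factors $H_i$ lie in $\gamma$-$A$-special subgroups of degree $r-1$, and Proposition \ref{propPS-CA} takes over the role of Proposition \ref{propPS-CA2}. First I would set $L=L(H)$, write $V_1,\dots,V_t$ for the images of $H_1,\dots,H_t$ in $H/\gamma_2(H)$ so that $L=\langle V_1,\dots,V_t\rangle$, let $A$ act naturally on $L$, and fix the maximal subgroups $A_1,\dots,A_s$ of $A$. I would then define \emph{special subspaces} of $L$ by the same recursion as before: a special subspace of weight $1$ is some $V_j$, and a special subspace of weight $\alpha\ge2$ is $W=[W_1,W_2]\cap C_L(A_l)$, where $W_1,W_2$ are special subspaces of weights summing to $\alpha$.

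The one genuinely new point is the correspondence claim: every special subspace $W$ corresponds to a subgroup of some $\gamma$-$A$-special subgroup of $G$ of degree $r-1$. I would prove this by induction on the weight $\alpha$, the base case being the hypothesis on the $H_j$. For the inductive step, write $W=[W_1,W_2]\cap C_L(A_l)$ and use the inductive hypothesis to obtain $J_1,J_2$, contained in $\gamma$-$A$-special subgroups $\tilde J_1,\tilde J_2$ of degree $r-1$, to which $W_1,W_2$ correspond; then $W$ corresponds to a subgroup of $[J_1,J_2]\cap C_G(A_l)\subseteq[\tilde J_1,\tilde J_2]\cap C_G(A_l)$. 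Here the asymmetry of Definition \ref{gamma-special} must be absorbed: since every $\gamma$-$A$-special subgroup of degree at least $1$ is, by construction, contained in the centralizer of a maximal subgroup, we have $\tilde J_2\le C_G(A_m)$ for some $m$, whence $[\tilde J_1,\tilde J_2]\cap C_G(A_l)\subseteq[\tilde J_1,C_G(A_m)]\cap C_G(A_l)$, and the latter is a $\gamma$-$A$-special subgroup of degree $r$. Proposition \ref{propPS-CA}(1) then places it inside a $\gamma$-$A$-special subgroup of degree $r-1$, completing the induction. Finally Proposition \ref{propPS-CA}(2), applied with $k=r-1$, shows that such a subgroup lies in $\gamma_{r-1}(C_G(B))\le\gamma_{r-1}(C_G(a))$ for a suitable $a\in A^\#$; hence, by the hypothesis of Theorem \ref{aa1}(1), every element of a special subspace corresponds to an $n$-Engel element of $G$ and so is ad-nilpotent of index at most $n$.

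From here the argument is identical to that of Proposition \ref{propaa1}(2). I would pass to $\overline L=L\otimes\mathbb Z[\omega]$ for a primitive $q$th root of unity $\omega$ and establish, exactly as before via Theorem \ref{Zelring} and Lemma \ref{lemmanovo}, that every element of $\overline W$ is ad-nilpotent of $\{n,q\}$-bounded index $u$ for each special subspace $W$. Choosing a generating set $v_1,\dots,v_\tau$ of $\overline L$ consisting of common eigenvectors for $A$, with $\tau$ a $\{q,r,t\}$-bounded number, I would check that any Lie commutator in the $v_i$ lands in some $\overline W$ and is therefore ad-nilpotent of index at most $u$. The only substantive change in the final step is the polynomial identity: here $C_L(a)=L(H,C_H(a))$ satisfies the linearized form of $[y,_n[z_1,\dots,z_{r-1}]]\equiv0$, a left-normed commutator of weight $r-1$ reflecting $\gamma_{r-1}$, rather than the derived identity involving $\delta_{2^d}$, since $\gamma_{r-1}(C_G(a))$ now consists of $n$-Engel elements of $G$. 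The same identity passes to $C_{\overline L}(a)=\overline{C_L(a)}$, hence to $C_{\overline L}(A)$, and Theorem \ref{BazaRing} yields $e,c$ depending only on $n,q,r,t$ with $e\gamma_c(\overline L)=0$, so that $e\gamma_c(L)=0$ as $L$ embeds into $\overline L$; this proves $(2)$. Part $(1)$ follows by the same scheme carried out over the field with $p$ elements, replacing each appeal to Theorem \ref{BazaRing} by an appeal to Corollary \ref{ZelBazaField}, exactly as in the remark following Proposition \ref{propaa1}.

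The main obstacle, and the only place where the lower-central-series version genuinely diverges from the derived-series version, is the correspondence claim: because the $\gamma$-$A$-special recursion commutes with a centralizer $C_G(A_i)$ rather than with another $\gamma$-$A$-special subgroup, one cannot match a symmetric Lie commutator $[\tilde J_1,\tilde J_2]$ directly against Definition \ref{gamma-special}. The device that rescues the induction is the elementary observation that every $\gamma$-$A$-special subgroup of positive degree is contained in the centralizer of a maximal subgroup, which converts the symmetric commutator into the asymmetric one demanded by the definition; checking that the degree bookkeeping together with Proposition \ref{propPS-CA}(1) keeps everything at degree $r-1$ is the crux of the whole proposition.
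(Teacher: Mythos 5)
Your proposal is correct and follows exactly the route the paper intends: it transcribes the proof of Proposition \ref{propaa1}(2), with $\gamma$-$A$-special subgroups, Proposition \ref{propPS-CA} and the linearized identity $[y,_n[z_1,\ldots,z_{r-1}]]\equiv 0$ replacing their derived-series counterparts, which is precisely the substitution the paper prescribes (and, for part (1), Corollary \ref{ZelBazaField} in place of Theorem \ref{BazaRing}). Your handling of the asymmetry in Definition \ref{gamma-special} --- absorbing $\tilde J_2$ into a centralizer $C_G(A_m)$ so that $[\tilde J_1,C_G(A_m)]\cap C_G(A_l)$ is $\gamma$-$A$-special of degree $r$, then descending to degree $r-1$ via Proposition \ref{propPS-CA}(1) --- correctly settles the one detail the paper leaves implicit.
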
  

Next,  one can establish part (1) of Theorem \ref{aa1}  by replacing  every appeal to Theorem \ref{teoPS-CA2}  and Proposition  \ref{propPS-CA2}  in the proof of  item (2) by an appeal to Theorem \ref{teoPS-CA}  and Proposition \ref{propPS-CA},  respectively.

\section{Proof of Theorem \ref{aa2}}
In this section we are concerned with the proof of Theorem \ref{aa2}. In parallel to what we did in the previous section, we will focus our attention on the proof of  the statement (2) of Theorem \ref{aa2}.
 
First of all, we will require the following analogue of Proposition \ref{propaa1}. 
\begin{proposition}
\label{propaa2}
Let $G$ be a finite group satisfying the hypothesis of Theorem \ref{aa2}(2).  Suppose that there exists an $A$-invariant $p$-subgroup  $H$ of $G^{(d)}$, with $p$ a prime divisor of the order of $G^{(d)}$, such that $H=H_1\cdots H_t$, where   each subgroup $H_i$ is contained in some $A$-special subgroup of $G$ of degree $d$ and $H$ is generated by a $\{q,r,t\}$-bounded number of elements. Then:

$(1)$ $L_p(H)$ is nilpotent of $\{n,p,q,r, t\}$-bounded class.

$(2)$ There exist positive integers $e,c$ depending only $n,q,r$ and $t$, such that $e\gamma_c(L(H))=0$.
\end{proposition}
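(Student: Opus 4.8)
The plan is to mirror the structure of the proof of Proposition \ref{propaa1}(2) exactly, since Proposition \ref{propaa2} differs only in that the Engel hypothesis is now imposed \emph{internally} (every element of $C_G(a)^{(d)}$ is $n$-Engel \emph{in $C_G(a)$}, not in $G$) and the rank condition is $2^d\le r-2$ rather than $2^d\le r-1$. I would set $L=L(H)$, let $V_1,\dots,V_t$ be the images of $H_1,\dots,H_t$ in $H/\gamma_2(H)$, so that $L=\langle V_1,\dots,V_t\rangle$, and let $A$ act naturally on $L$. Exactly as before, I introduce \emph{special subspaces} of $L$ by weight, with weight-$1$ subspaces equal to the $V_j$ and weight-$\alpha$ subspaces of the form $[W_1,W_2]\cap C_L(A_l)$. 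The identical induction on weight, using Proposition \ref{propPS-CA2}(1), shows every special subspace $W$ corresponds to a subgroup of some $A$-special subgroup of $G$ of degree $d$.

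The key point where the two proofs diverge is the ad-nilpotency of elements of a special subspace. By Proposition \ref{propPS-CA2}(2), since now $2^d\le r-2$, an $A$-special subgroup of $G$ of degree $d$ lies in $C_G(B)^{(d)}$ for some $B\le A$ with $|A/B|\le q^{2^d}\le q^{r-2}$; hence $B$ is \emph{noncyclic}, so $B$ contains at least two independent nontrivial elements and in particular $C_G(B)^{(d)}\le C_G(a)^{(d)}$ for a suitable $a\in A^\#$. This is the reason the hypothesis $r\ge 3$ and $2^d\le r-2$ is needed: the rank drop leaves enough room to place $H$ inside $C_G(a)^{(d)}$ and moreover inside $C_G(a)$, so that the internal $n$-Engel hypothesis applies and each element of $W$ corresponds to an $n$-Engel element of $C_G(a)$. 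Consequently every element of a special subspace $W$ is ad-nilpotent of index at most $n$ in the \emph{associated Lie ring of $C_G(a)$}, which by Lemma \ref{leWZ}-type reasoning (or directly, since we are working in the Lie ring associated to $C_G(a)$ via the induced series) gives ad-nilpotency of index at most $n$ in $L(C_G(a),\cdot)$, the precise analogue of \eqref{Claim2}.

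From here the argument is verbatim the one for Proposition \ref{propaa1}(2). I extend scalars to $\overline{L}=L\otimes\mathbb{Z}[\omega]$ with $\omega$ a primitive $q$th root of unity, and prove the analogue of \eqref{Claim1}: for each special subspace $W$ there is a $\{n,q\}$-bounded $u$ such that every $w\in\overline{W}$ is ad-nilpotent of index at most $u$. The proof is unchanged: write $w=\sum_{j=0}^{q-2}\omega^j l_j$ with $l_j\in W$, form $K=\langle l_0,\omega l_1,\dots,\omega^{q-2}l_{q-2}\rangle$ and $H_0=\langle x_0,\dots,x_{q-2}\rangle$; now $L(H,H_0)$ (and hence $\overline{L(H,H_0)}\supseteq K$) satisfies the linearized $n$-Engel identity because the generators $x_j$ lie in a single $C_G(a)^{(d)}$ and are $n$-Engel \emph{within} $C_G(a)$; commutators in the $\omega^j l_j$ have the form $\omega^\alpha v$ with $v\in W$, hence are ad-nilpotent of index at most $n$; so Theorem \ref{Zelring} makes $K$ nilpotent of $\{n,q\}$-bounded class and Lemma \ref{lemmanovo} yields $[\overline{L},_u K]=0$. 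Then I choose common eigenvectors $v_1,\dots,v_\tau$ for $A$ generating $\overline{L}$ with $\tau$ a $\{q,r,t\}$-bounded number, show by induction on weight that every Lie commutator in them lies in some $\overline{W}$ and is therefore ad-nilpotent of index at most $u$, and finally observe that each $C_{\overline{L}}(a)=\overline{C_L(a)}$ satisfies the linearized version of $[y,_n\delta_{2^d}(y_1,\dots,y_{2^d})]\equiv 0$. Theorem \ref{BazaRing} then produces $e,c$ depending only on $n,q,r,t$ with $e\gamma_c(\overline{L})=0$, whence $e\gamma_c(L)=0$ since $L$ embeds in $\overline{L}$. Part (1) follows the same way, replacing the appeal to Theorem \ref{BazaRing} by Corollary \ref{ZelBazaField}.

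The main obstacle I anticipate is precisely the bookkeeping at the divergence point: one must verify that under the internal hypothesis the \emph{Lie-ring} centralizers used in the Engel estimates are the ones associated to $C_G(a)$ rather than to $G$, i.e.\ that the linearized $n$-Engel identity genuinely holds in $L(C_G(a),\cdot)$ (equivalently in $C_L(A)$-type subrings) and transfers correctly to $\overline{L}$. Everything else is a faithful transcription of the proof of Proposition \ref{propaa1}(2), with the single substitution of the rank bound $q^{r-2}$ for $q^{r-1}$ to guarantee a noncyclic stabilizer.
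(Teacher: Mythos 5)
Your overall skeleton (special subspaces, scalar extension to $\overline{L}=L\otimes\mathbb{Z}[\omega]$, Theorem \ref{Zelring} plus Lemma \ref{lemmanovo}, then Theorem \ref{BazaRing}) is the paper's skeleton, but there is a genuine gap exactly at the point you yourself flag as ``the main obstacle'', and your proposed resolution does not close it. From the noncyclicity of $B$ you place each $H_i$ (hence each special subspace $W$) inside $C_G(a)^{(d)}$ for a \emph{single} $a\in A^{\#}$, and you conclude that elements of $W$ are ad-nilpotent of index at most $n$ ``in the associated Lie ring of $C_G(a)$'', i.e.\ in the subring $L(H,C_H(a))$ of $L=L(H)$. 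That is strictly weaker than the analogue of (\ref{Claim2}), which asserts ad-nilpotency in all of $L$: under the internal hypothesis an element $h\in H_i$ is only known to satisfy $[x,{}_n\, h]=1$ for $x\in C_G(a)$, and $H\not\subseteq C_G(a)$ in general, so nothing yet bounds $(\mathrm{ad}\,h)$ on the rest of $L$. The machinery you then invoke ``verbatim'' needs the stronger statement: Lemma \ref{lemmanovo} must be applied with ambient ring $\overline{L}$ (to get $[\overline{L},{}_u\, K]=0$, not merely $[\overline{L(H,C_H(a))},{}_u\, K]=0$), and Theorem \ref{BazaRing} requires a generating set of $\overline{L}$ every commutator in which is ad-nilpotent \emph{in $\overline{L}$}. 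With ad-nilpotency known only in a proper subring, both applications fail, and the analogue of (\ref{Claim1}) is never established.

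The paper closes this gap with an argument you are missing, and it is the real way the hypothesis $2^d\le r-2$ enters. Since $|A/B|\le q^{2^d}\le q^{r-2}$, the subgroup $B$ meets \emph{every} maximal subgroup $A_j$ of $A$ nontrivially (indeed $|B\cap A_j|\ge |B|/q\ge q$). Hence for each $j\le s$ one can pick $a_j\in (B\cap A_j)^{\#}$, so that simultaneously $C_G(A_j)\le C_G(a_j)$ and $H_i\le C_G(B)^{(d)}\le C_G(a_j)^{(d)}$; the internal hypothesis then makes every $h\in H_i$ an $n$-Engel element of $C_G(A_j)$, \emph{for every} $j$, not just for one centralizer. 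Passing to the Lie ring this gives $[C_L(A_j),{}_n\, L(H,H_i)]=0$ for all $j$, and since the coprime action of the noncyclic group $A$ yields the decomposition $L=\sum_{j\le s}C_L(A_j)$, additivity of $(\mathrm{ad}\,w)^n$ gives $[L,{}_n\, w]=0$ for every $w$ in a special subspace. This is the genuine analogue of (\ref{Claim2}) --- ad-nilpotency of index at most $n$ in the whole of $L$ --- and only once it is in place does the rest of your transcription (which is otherwise faithful to the paper's proof) go through.
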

In what follows, we outline the proof of item (2) of Proposition \ref{propaa2}.  The proof of part (1) can be obtained with a similar argument. 
\begin{proof}
By hypothesis  we know that each subgroup $H_i$ for $i=1,\ldots,t$, is contained in some $A$-special subgroup of $G$ of degree $d$. Hence, Proposition \ref{propPS-CA2}(2) implies  that  each   $H_i$ is contained in $C_G(B)^{(d)}$, for some subgroup $B$ of $A$ such that $|A/B|\leq q^{2^{d}}$. Let $A_1,\ldots,A_s$ be the maximal subgroups of $A$. For any $A_j$ the intersection $B\cap A_j$ is not trivial. Thus, there exists $a\in A^{\#}$ such that the centralizer $C_G(A_j)$ is contained in $C_G(a)$ and $H_i$ is contained in $C_G(a)^{(d)}$. Since all elements of $C_G(a)^{(d)}$  are  $n$-Engel in $C_{G}(a)$, we deduce that 
\begin{equation}\label{aa2**}
\mbox{each element}\ h\in H_i\ \mbox{is $n$-Engel in}\ C_G(A_j), \mbox{for any}\  j\leq s.
\end{equation}

 We  now consider  $L=L(H)$  the Lie ring associated to  the $p$-subgroup $H$ of $G^{(d)}$.  In the same spirit of what we did in the proof of Proposition \ref{propaa1} we define special  subspaces  of $L$  for any weight and  observe that every element of a special subspace $W$ of $L$ corresponds to an element of  a subgroup of an $A$-special subgroup of $G$ of degree $d$. Since $2^d\leq r-2$ we have $L=\sum_{j\leq s}C_{L}(A_{j})$. Now  taking into account that every special subspace $W$ of $L$ is contained in some $L(H,H_{i})$  and that,  by (\ref{aa2**}), we have  $[C_{L}(A_{j}),_{n}L(H,H_{i})]=0$, we deduce that  any element of a special subspace $W$ is ad-nilpotent of index at most $n$ in $L$.

The rest of the proof consists in mimicking  the argument used in  the proof of Proposition \ref{propaa1}, with only obvious changes, so we omit the further details. 
\end{proof}

%
Now we are ready to   deal with the proof of  the part (2) of Theorem \ref{aa2}.

\begin{proof}
By well-known Zorn's Theorem \cite[Theorem III 6.3]{Huppert1} each $C_G(a)^{(d)}$ is nilpotent.  Furthermore  
\cite[Theorem 31]{PS-CA2} implies that  $G^{(d)}$ is nilpotent.  It follows that  $G^{(d)}$ is a direct product of its Sylow subgroups. 

Choose  $p\in \pi(G^{(d)})$ and let $P$ be a Sylow $p$-subgroup of $G^{(d)}$. 
By  Theorem \ref{teoPS-CA2}  we know that  $P=P_{1}\cdots P_{t}$, where $t$ is a $\{q,r\}$-bounded number and each subgroup $P_i$ is of the form $P\cap H$ where $H$ is some $A$-special subgroup of $G$ of degree $d$. 
 Moreover Proposition \ref{propPS-CA2}(2) tells us that  each $P_i\leq C_G(B)^{(d)}$, for some subgroup $B$ of $A$ such that $|A/B|\leq q^{2^d}$.

 Let $A_1,\ldots,A_s$ be the maximal subgroups of $A$. For any $A_j$ the intersection $B\cap A_j$ is not trivial. Thus, there exists $a\in A^{\#}$ such that the centralizer $C_G(A_j)$ is contained in $C_G(a)$ and $P_i$ is contained in $C_G(a)^{(d)}$. Thus, 
\begin{equation}\label{condEngel}
\mbox{each element}\ l\in P_i\ \mbox{is $n$-Engel in}\ C_G(A_j), \mbox{for any}\  j\leq s.
\end{equation}
Choose arbitrarily  $x, y\in P$. We will show that the subgroup $\left< x, y\right>$  is nilpotent of $\{n, q, r\}$-bounded class. 
Following an argument similar to that used in the proof of Theorem \ref{aa1}(2) we write $x=x_1\cdots x_t$ and $y=y_1\cdots y_t$, where each  $x_i$ and $y_i$ belongs to $P_i$, for $i=1,\ldots, t$, and  want to show that  the subgroup $Y=\left< x_i^A, y_i^A|i=1,\ldots,t\right>$ is nilpotent of $\{n,q,r\}$-bound class.  Appealing to Proposition  \ref{propaa2} and Theorem \ref{finiteL}  we find  that $Y$ has a powerful characteristic subgroup $N$ of $\{n,q,r\}$-bounded index and, by  \cite[6.1.8(ii), p.\ 164]{Rob}, of   $\{n,q,r\}$-bounded rank as well.

%

Thus each  $C_N(a)^{(d)}$ is an $n$-Engel subgroup and can be generated by a $\{n,q,r\}$-bounded number of elements. Zelmanov noted in \cite{Z2} that the nilpotency class of a finite $n$-Engel group is bounded in terms of $n$ and the number of generators of that group. We conclude that each $C_N(a)^{(d)}$ is nilpotent of $\{n,q,r\}$-bounded class. Now 
\cite[Theorem 31]{PS-CA2} tell us  that $N^{(d)}$ is nilpotent of $\{n,q,r\}$-bounded class. This implies that  $Y$ has $\{n, q, r\}$-bounded derived length $l$, say. 

 In  view of
 \cite[Lemma 4.1]{shusa}, it is enough to see that there exists an  $\{n,q,r\}$-bounded number $u$ such that each generator of $Y$ is an  $u$-Engel element in $Y$. Indeed, let $M=Y^{(l-1)}$ be  the last  nontrivial term of derived series of $Y$.  By induction on $l$ we see that $Y/M$ is of $\{n, q, r\}$-bounded  nilpotency class, say $c_1$. For any generator  $x$  of $Y$  we have
$[Y,\underbrace{x,\ldots,x}_{c_1}]\subseteq M.$ Since $M$ is abelian and $A$-invariant we can write $M=\sum_{j\leq s}C_M(A_j).$ By (\ref{condEngel}) we have $[M,\underbrace{x,\ldots,x}_{n}]=1$. Thus, $[Y,\underbrace{x,\ldots,x}_{c_1+n}]=1$.   This implies that $\langle x,y\rangle$ is  nilpotent of $\{n,q,r\}$-bound class, as desired.

By the argument above  we get that each Sylow $p$-subgroup $P$ of $G^{(d)}$ is $k$-Engel for some $\{n,q,r\}$-bounded number $k$. The result follows. 
\end{proof}

We finish by noting that the proof of item (1) of Theorem \ref{aa2} can be obtained by replacing  every appeal to  
\cite[Theorem 31]{PS-CA2}, Theorem \ref{teoPS-CA2}, Proposition  \ref{propPS-CA2} and Proposition \ref{propaa2}  in the proof of (2) by an appeal to \cite[Theorem 41]{PS-CA2}, Theorem \ref{teoPS-CA}, Proposition \ref{propPS-CA} and the  following analogue of Proposition \ref{propaa2}, respectively.
\begin{proposition}
\label{propaa2gamma}
Let $G$ be a finite group satisfying the hypothesis of Theorem \ref{aa2}(1). Suppose that there exists an $A$-invariant $p$-subgroup  $H$ of $\gamma_{r-2}(G)$,  with $p$ a prime divisor of the order of $\gamma_{r-2}(G)$, such that $H=H_1\cdots H_t$, where  each subgroup $H_i$ is contained in some $\gamma$-$A$-special subgroup of $G$ of degree $r-2$ and $H$ is generated by a $\{q,r,t\}$-bounded number of elements. Then:

$(1)$ $L_p(H)$ is nilpotent of $\{n,p,q,r, t\}$-bounded class.

$(2)$ There exist positive integers $e,c$ depending only $n,q,r$ and $t$, such that $e\gamma_c(L(H))=0$.
\end{proposition}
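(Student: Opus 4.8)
The plan is to reproduce the proof of Proposition \ref{propaa2} almost verbatim, systematically replacing every appeal to $A$-special subgroups of degree $d$ and to Proposition \ref{propPS-CA2} by the corresponding appeal to $\gamma$-$A$-special subgroups of degree $r-2$ and to Proposition \ref{propPS-CA}. The only genuinely new point is to re-establish, in this setting, the uniform $n$-Engel condition relative to the centralizers of the maximal subgroups of $A$; once that is in hand the whole Lie-theoretic machinery of Propositions \ref{propaa1} and \ref{propaa2} applies unchanged.

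First I would extract the uniform Engel condition. By hypothesis each $H_i$ lies in some $\gamma$-$A$-special subgroup of $G$ of degree $r-2$, so Proposition \ref{propPS-CA}(2) furnishes a subgroup $B\leq A$ with $|A/B|\leq q^{r-2}$ and $H_i\leq\gamma_{r-2}(C_G(B))$. Since then $|B|\geq q^{2}$ while every maximal subgroup $A_j$ of $A$ has order $q^{r-1}$, the intersection $B\cap A_j$ has order at least $q$ by the standard product formula in the elementary abelian group $A$, hence is nontrivial; choosing $a\in(B\cap A_j)^{\#}$ gives simultaneously $C_G(A_j)\leq C_G(a)$ and $H_i\leq\gamma_{r-2}(C_G(B))\leq\gamma_{r-2}(C_G(a))$. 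As the hypothesis of Theorem \ref{aa2}(1) asserts that every element of $\gamma_{r-2}(C_G(a))$ is $n$-Engel in $C_G(a)$, and $C_G(A_j)\leq C_G(a)$, I conclude that each element $h\in H_i$ is $n$-Engel in $C_G(A_j)$ for every $j\leq s$. This is the precise analogue of the displayed condition in the proof of Proposition \ref{propaa2}.

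Next I would pass to the associated Lie ring $L=L(H)$ and introduce the special subspaces exactly as in Propositions \ref{propaa1} and \ref{propaa2}, observing as there that every element of a special subspace $W$ corresponds to an element of some $H_i$, hence of $\gamma_{r-2}(C_G(a))$ for a suitable $a\in A^{\#}$. Since $r\geq 3$ the acting group $A$ is noncyclic, so $L=\sum_{j\leq s}C_L(A_j)$. Because each special subspace $W$ is contained in some $L(H,H_i)$ and the previous step yields $[C_L(A_j),_n L(H,H_i)]=0$ for every $j$, writing an arbitrary element of $L$ as a sum of elements of the $C_L(A_j)$ shows that every element of a special subspace is ad-nilpotent of index at most $n$ in $L$.

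From this point the argument is the one already carried out for Proposition \ref{propaa1}. I would base-change to $\overline{L}=L\otimes\mathbb{Z}[\omega]$ for a primitive $q$th root of unity $\omega$, apply Theorem \ref{Zelring} together with Lemma \ref{lemmanovo} to upgrade ad-nilpotency of index $n$ on each $\overline{W}$ to ad-nilpotency of some $\{n,q\}$-bounded index $u$, select a $\{q,r,t\}$-bounded generating set of common eigenvectors for $A$, verify that every Lie commutator in these generators lands in some $\overline{W}$ and is therefore ad-nilpotent of index at most $u$, and finally note that $C_{\overline{L}}(a)=\overline{C_L(a)}$ satisfies the linearized identity $[y,_n[y_1,\ldots,y_{r-2}]]\equiv0$ coming now from the $\gamma_{r-2}$-hypothesis. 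Theorem \ref{BazaRing} then produces $\{n,q,r,t\}$-bounded integers $e,c$ with $e\gamma_c(\overline{L})=0$, whence $e\gamma_c(L)=0$ since $L$ embeds into $\overline{L}$; this is part (2), and part (1) follows identically with Corollary \ref{ZelBazaField} in place of Theorem \ref{BazaRing}. I expect the main obstacle to be exactly the first step: unlike in Theorem \ref{aa1}, the Engel hypothesis here is relative to $C_G(a)$ and not to $G$, so ad-nilpotency in the whole of $L$ cannot be read off directly and must instead be synthesised from the decomposition $L=\sum_j C_L(A_j)$ together with the vanishing $[C_L(A_j),_n L(H,H_i)]=0$; the remainder is a routine transcription of the $A$-special case.
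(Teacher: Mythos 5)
Your proposal is correct and takes essentially the same route as the paper: the paper's proof of Proposition \ref{propaa2gamma} consists precisely in repeating the proof of Proposition \ref{propaa2} with $\gamma$-$A$-special subgroups of degree $r-2$ and Proposition \ref{propPS-CA} replacing $A$-special subgroups of degree $d$ and Proposition \ref{propPS-CA2}. In particular, your derivation of the uniform condition that each $h\in H_i$ is $n$-Engel in $C_G(A_j)$ (from $|A/B|\leq q^{r-2}$, hence $B\cap A_j\neq 1$), the decomposition $L=\sum_{j\leq s}C_L(A_j)$, and the concluding appeal to Theorem \ref{BazaRing} (resp.\ Corollary \ref{ZelBazaField}) with the linearized identity $[y,{}_n[y_1,\ldots,y_{r-2}]]\equiv 0$ all coincide with the paper's argument.
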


\section{Results on profinite groups}
In this section we deal with  the proofs of Theorems \ref{bb1} and \ref{bb2} which are  profinite non-quantitative analogues of  Theorems \ref{aa1} and \ref{aa2} respectively.  

Let $w=w(x_1,x_2,\dots,x_k)$ be a group-word. Let $H$ be a subgroup of a group $G$ and $g_1,g_2,\dots,g_k\in G$. We say that the law $w\equiv1$ is satisfied on the cosets $g_1H,g_2H,\dots,g_kH$ if $w(g_1h_1,g_2h_2,\dots,g_kh_k)=1$ for all $h_1,h_2,\dots,h_k\in H$.  Let us start  with a lemma.

\begin{lemma}\label{lpi}
Assume that a finite group $A$ acts coprimely on a profinite group $G$. Then for each prime $p$ the following holds:

$(1)$ If, for some integer $k$, all elements in $\gamma_k(C_G(A))$ are Engel in $C_G(A)$,  then $L_p(G)$ satisfies a multilinear Lie polynomial identity.

$(2)$ If, for some integer $k$,  all elements in the $k$th derived group of $C_G(A)$ are Engel in $C_G(A)$, then $L_p(G)$ satisfies a multilinear Lie polynomial identity.
\end{lemma}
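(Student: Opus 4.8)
The plan is to fix a prime $p$ (the case $L_p(G)=0$ being vacuous, as the zero algebra satisfies every identity) and reduce everything to a polynomial identity on the fixed-point subalgebra, after which the Bahturin--Zaicev--Linchenko theorem does the rest. Since $A$ acts coprimely, every prime occurring in a finite quotient of $G$ is prime to $|A|$; in particular the characteristic $p$ of $L=L_p(G)$ is prime to $|A|$, so Theorem \ref{blz} applies to the natural action of $A$ on $L$. Hence it suffices to show that the fixed subalgebra
$$C:=C_{L_p(G)}(A)=L_p(G,C_G(A))$$
satisfies a multilinear polynomial identity: Theorem \ref{blz} then returns a polynomial identity for $L$, which I would multilinearize (extending scalars to an infinite field if necessary, multilinear identities being insensitive to such extensions) to reach the statement. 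I would treat (1) and (2) simultaneously, the sole difference being the group-word $w$ in play, namely the left-normed bracket $w=[y_1,\dots,y_k]$ in case (1) and the derived word $w=\delta_k(y_1,\dots,y_{2^k})$ in case (2).

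First I would transport the hypothesis to the Lie side. Put $H=C_G(A)$. By coprimeness $C$ is generated by its first homogeneous component $C_{G_1/G_2}(A)$ and embeds, via the identifications recalled in Section 3, in the Lie ring associated with $H$ through the intersection of the $p$-dimension central series of $G$ with $H$. To establish an identity of the form $[x,_n w]\equiv0$ on $C$ it is enough, by multilinearity, to test it on homogeneous arguments; for such arguments $w(y_1,\dots)$ is the image of a group element $w(h_1,\dots)\in\gamma_k(H)$ (respectively $H^{(k)}$), which the hypothesis declares to be an Engel element of $H$. Lemma \ref{leWZ}, applied to $H$ with its induced $N_p$-series, then shows that this image is ad-nilpotent in $C$. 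Thus every homogeneous value of $w$ on $C$ is ad-nilpotent.

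The crux---and what I expect to be the main obstacle---is to pass from this pointwise ad-nilpotency, whose index a priori depends on the element because the hypothesis supplies only the unquantified Engel condition, to a uniform exponent $n$ yielding a genuine identity $[x,_n w]\equiv0$. My plan is a compactness argument inside the compact group $H$: the sets $T_n=\{(x,y_1,\dots):[x,_n w]=1\}$ are closed and cover the relevant finite Cartesian power of $H$, so by the Baire category theorem some $T_n$ has nonempty interior, which produces a uniform Engel bound on a coset of an open subgroup. The genuinely delicate point is to globalize this local bound. Here I would lean on the coprime action---working throughout with $A$-invariant subgroups and the fixed-point correspondence $L^*(G,C_G(A))=C_{L^*(G)}(A)$---and on an inverse-limit argument over the finite $A$-invariant quotients of $G$, on which the quantitative tools Theorem \ref{bZ1} and Theorem \ref{BazaRing} are available to manufacture a bound independent of the quotient.

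Once a uniform $n$ is in hand, $C$ satisfies the linearization of $[x,_n w]\equiv0$, a nonzero multilinear Lie polynomial identity (the linearized $n$-Engel identity precomposed with a length-$k$ bracket in case (1), and with $\delta_k$ in case (2)). Applying Theorem \ref{blz} and multilinearizing then finishes the proof for both parts. I expect the reduction to $C$ and the translation through Lemma \ref{leWZ} to be routine given Sections 2 and 3, with essentially all of the difficulty concentrated in securing the uniform Engel exponent globally.
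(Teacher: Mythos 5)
Your overall skeleton agrees with the paper's proof up to the decisive step: reduce via Theorem \ref{blz} to producing a polynomial identity on $C=C_{L_p(G)}(A)$, identify $C$ with the Lie algebra associated to $C_G(A)$ via the intersection of $C_G(A)$ with the $p$-dimension central series of $G$, and run a Baire category argument on a finite Cartesian power of the compact group $C_G(A)$ to obtain an integer $n$, elements $g,g_1,\dots$ of $C_G(A)$ and an open subgroup $H\leq C_G(A)$ such that the group law $[x,{}_n\,w(y_1,\dots)]\equiv 1$ holds on the cosets $gH,g_1H,\dots$ (with $w$ the left-normed bracket in case (1) and $\delta_k$ in case (2)). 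This is exactly what the paper does.

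The gap lies in what you do next. You propose to upgrade this coset law to a genuine uniform law $[x,{}_n\,w]\equiv 1$ on all of $C_G(A)$, and your plan for doing so (inverse limits over finite $A$-invariant quotients plus Theorems \ref{bZ1} and \ref{BazaRing}) is circular: both of those results require as \emph{input} a polynomial identity on the fixed points---precisely what you are trying to establish---and neither converts an unquantified Engel condition into a uniform bound. Moreover, the globalization you aim for is in general out of reach: the hypothesis supplies no uniform Engel exponent, and there is no reason why a bound valid on a coset of an open subgroup should propagate to the whole group; the Hall-subgroup trick used elsewhere in the paper to spread a coset law (in the proof of Theorem \ref{bb1}) relies on pronilpotency of $G^{(d)}$, which is unavailable for $C_G(A)$ here. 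The missing idea is that no globalization is needed at all: the Wilson--Zelmanov theorem \cite[Theorem 1]{WZ} (not Lemma \ref{leWZ}, which is the ad-nilpotency statement) says that if a group law is satisfied on cosets of an open subgroup of a profinite group, then the Lie algebra associated to an $N_p$-series of that group satisfies a polynomial identity. Applying it to $C_G(A)$ with the coset law furnished by Baire immediately gives a polynomial identity for $C$, and Theorem \ref{blz} then finishes both parts. Your detour through Lemma \ref{leWZ} (pointwise ad-nilpotency of homogeneous values of $w$) is likewise unnecessary for this lemma: ad-nilpotency of individual elements does not by itself yield a polynomial identity, and that lemma plays its role elsewhere in the paper, not here.
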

The proofs of the items (1) and (2) of the lemma are similar, so we give a detailed proof of the second statement. 
 \begin{proof} Let $L=L_p(G)$.
In view of Theorem \ref{blz} it is sufficient to show that $C_L(A)$ satisfies a polynomial identity. We know that $C_L(A)$ is isomorphic to the Lie algebra associated with the central series of $C_G(A)$ obtained by intersecting $C_G(A)$ with the $p$-dimension central series of $G$. 

Let $T=\underbrace{C_G(A)\times\cdots\times C_G(A)}_{2^k+1}$. For each integer $i$ we define the set $$S_i=\{(t,t_1,\ldots,t_{2^k})\in T :[t,_i\delta_k(t_1,\ldots,t_{2^k})]=1\}.$$
Since the sets $S_i$ are closed in $T$ and their union coincides with $T$, by the Baire category theorem \cite[p.\ 200]{Baire} at least one of these sets has a non-empty interior. Therefore, we can find an open subgroup $H$ in $C_G(A)$, elements $g,g_1,\ldots,g_{2^k}\in C_G(A)$ and an integer $n$ such that the identity $[x,_n\delta_k(y_1,\ldots,y_{2^k})]\equiv 1$ is satisfied on the cosets $gH,g_1H,\ldots,g_{2^k}H$. Thus, the Wilson-Zelmanov result \cite[Theorem 1]{WZ} tell us that $C_L(A)$ satisfies a  polynomial identity. 
\end{proof}

We will also require the following profinite version of  \cite[Lemma 2.1]{PS-CA}. The proof is straightforward so we  do not give details.
\begin{lemma}\label{prole2.1}
Suppose that a pronilpotent group $G$ is generated by subgroups $G_1,\ldots,G_t$ such that $\gamma_i(G)=\left<\gamma_i(G)\cap G_j:1\leq j\leq t\right>$ for all $i\geq 1.$ Then $G=G_1\cdots G_t$.
\end{lemma}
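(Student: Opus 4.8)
The plan is to show that the product set $G_1\cdots G_t$ is already closed under multiplication and inversion (hence a subgroup), and then that it is dense in $G$; since in a profinite group the product of finitely many closed subgroups need not automatically be a subgroup, the pronilpotency hypothesis together with the commutator condition $\gamma_i(G)=\langle\gamma_i(G)\cap G_j:1\le j\le t\rangle$ is exactly what is needed to push this through. First I would reduce to the finite case. Because $G$ is pronilpotent it is the inverse limit of its finite nilpotent continuous quotients $G/N$, where $N$ ranges over the open normal subgroups; under any such quotient map the images of the $G_j$ generate $G/N$, and the hypothesis $\gamma_i(G)=\langle\gamma_i(G)\cap G_j\rangle$ passes to $G/N$ since taking images commutes with forming $\gamma_i$ and with intersections modulo $N$. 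Thus it suffices to prove the statement for a finite nilpotent group, which is precisely the content of \cite[Lemma 2.1]{PS-CA}.

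The key steps, in order, are as follows. I would fix an open normal subgroup $N$ of $G$ and set $\bar G=G/N$, $\bar G_j=G_jN/N$. One checks that $\bar G=\langle\bar G_1,\ldots,\bar G_t\rangle$ is finite nilpotent and that $\gamma_i(\bar G)=\langle\gamma_i(\bar G)\cap\bar G_j:1\le j\le t\rangle$ for all $i$, using that the canonical projection sends $\gamma_i(G)$ onto $\gamma_i(\bar G)$ and that $(\gamma_i(G)\cap G_j)N/N\le\gamma_i(\bar G)\cap\bar G_j$. The finite Lemma \cite[Lemma 2.1]{PS-CA} then gives $\bar G=\bar G_1\cdots\bar G_t$, i.e. $GN=G_1\cdots G_tN$ for every open normal $N$. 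Finally I would take the inverse limit: the equality $G=G_1\cdots G_t N$ holds for all open normal $N$, and since the $G_j$ are closed, the product set $G_1\cdots G_t$ is the continuous image of the compact space $G_1\times\cdots\times G_t$ under multiplication, hence compact and therefore closed in $G$. A closed subset that is dense (because it meets every coset $gN$) must equal $G$, giving $G=G_1\cdots G_t$.

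The main obstacle I anticipate is the last compactness-and-density step rather than the finite reduction. One must be careful that ``$G=G_1\cdots G_t N$ for all open normal $N$'' genuinely forces equality with the closed set $G_1\cdots G_t$: this uses that $G_1\cdots G_t$ is \emph{closed}, which in turn relies on each $G_j$ being a closed subgroup and on the continuity and properness of multiplication on the compact group $G$. (Without closedness one would only obtain density.) Since the paper's conventions already stipulate that all subgroups are closed and all maps continuous, this is exactly why the authors can assert that the argument is straightforward; the pronilpotency hypothesis is what guarantees that the finite quotients are nilpotent so that \cite[Lemma 2.1]{PS-CA} applies at each finite level.
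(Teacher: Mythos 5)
Your proof is correct and is exactly the ``straightforward'' argument the paper alludes to when it cites the finite case \cite[Lemma 2.1]{PS-CA} and omits details: pass to the finite nilpotent quotients $G/N$ (where the hypothesis on $\gamma_i$ descends), apply the finite lemma there, and conclude by noting that $G_1\cdots G_t$ is compact, hence closed, and dense. No gaps; the only cosmetic point is that your opening plan (showing the product set is a subgroup) is never actually needed, since closed plus dense already finishes the argument.
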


As usual, for a profinite group $G$ we denote by $\pi(G)$ the set of prime divisors of the orders of finite continuous homomorphic images of $G$.  We say that $G$ is a $\pi$-group if $\pi(G)$ is contained in $\pi$ and $G$ is a $\pi'$-group if $\pi(G)\cap\pi=\emptyset$. If $\pi$ is a set of primes, we denote by $O_{\pi}(G)$ the maximal normal $\pi$-subgroup of $G$ and by $O_{\pi'}(G)$ the maximal normal $\pi'$-subgroup.

We are ready to embark on the proof of Theorem \ref{bb1}. 
\begin{proof}[{ Proof of Theorem \ref{bb1}(2)}]
Let $\mathcal{S}$ be the subset of all $A$-invariant open normal subgroups of $G$.  For any $N\in \mathcal{S}$, set $Q=G/N$. Observe  that  $C_{Q}(a)^{(d)}=(C_G(a))^{(d)}N/N$, for any $a\in A^{\#}$.
By hypothesis each  $x\in C_G(a)^{(d)}$ is Engel in $G$ and  this implies that   all elements of  $C_{Q}(a)^{(d)}$ are Engel in $Q$. Hence Theorem \ref{aa1}(2) tell us  that $Q^{(d)}$ is Engel. By Zorn's Theorem \cite[Theorem III 6.3]{Huppert1}  $Q^{(d)}$ is nilpotent. Since  $G^{(d)}\cong\underleftarrow{\lim}_{N\in \mathcal{S}}\, (G/N)^{(d)}$, we get that $G^{(d)}$ is  a pronilpotent group. Thus, $G^{(d)}$ is the Cartesian product of its Sylow subgroups.

Choose $a\in A^{\#}$. For each positive integer $i$ we set 
$$S_i=\{(x,y)\in G^{(d)}\times C_G(a)^{(d)}:[x,_iy]=1\}.$$ Since the sets $S_i$ are closed in $G^{(d)}\times C_G(a)^{(d)}$ and their union coincides with $G^{(d)}\times C_G(a)^{(d)}$, by the Baire category theorem  at least one $S_i$ has a non-empty interior. Therefore we can find an integer $n$, an open subgroup $K$ in $G^{(d)}$, elements $u\in G^{(d)}$ and $v\in C_G(a)^{(d)}$ such that 
$$[ul,_nvk]=1, \ \ \ \mbox{for any}\ l\in K\ \mbox{and any}\ k\in K\cap C_G(a)^{(d)}.$$
Let $[G^{(d)}:K]=m$ and let $\pi_1=\pi(m)$ be the set of primes dividing $m$. Denote $O_{\pi_1'}(G^{(d)})$ by $T$.
Since $T$ is isomorphic to the image of $K$ in $G^{(d)}/O_{\pi_1}(G^{(d)})$, it is easy to see that $[T,_nx]=1$ for all $x\in C_T(a)^{(d)}$. Thus, each element of $C_T(a)^{(d)}$ is $n$-Engel in $T$.

The open subgroup $K$, the set $\pi_1$ and the integer $n$ depend only on the choice of $a\in A^{\#}$, so strictly speaking they should be denoted by $K_a, \pi_a$ and $n_a$, respectively. We choose such $K_a, \pi_a$ and $n_a$ for any $a\in A^{\#}$. Set $\pi=\cup_{a\in A^{\#}}\pi_a$, $n=\mathrm{max}\{n_a:a\in A^{\#}\}$ and $R=O_{\pi'}(G^{(d)})$. The choice of the set $\pi$ guarantees that for each $a\in A^{\#}$ every element of $C_R(a)^{(d)}$ is $n$-Engel in $R$. Using a routine inverse limit argument we deduce from Theorem 
\ref{aa1}(2) that $R$ is $n_1$-Engel for some suitable integer $n_1$. By \cite[Theorem 5]{WZ} $R$ is locally nilpotent. Let $p_1,p_2,\ldots, p_r$ be the finitely many primes in $\pi$ and $S_1,\ldots,S_r$ be the corresponding Sylow subgroups of $G^{(d)}$. Then $G^{(d)}=S_1\times \cdots \times S_r\times R$ and therefore it is enough to show that each subgroup $S_i$ is locally nilpotent.

Let $P$ be such a $p$-Sylow subgroup of $G^{(d)}$, for some $p\in \pi$. By the profinite version of Theorem \ref{teoPS-CA2} we have $P=P_1\cdots P_t$, where any $P_j=P\cap H$ and $H$ is an $A$-special subgroup of $G$ of degree $d$. Since $2^d\leq r-1$ by the profinite version of Theorem \ref{propPS-CA2}(2) each subgroup $P_j\subseteq C_G(a)^{(d)}$, for some suitable $a\in A^{\#}$. 

Choose arbitrary elements $x_1,\ldots,x_m$ in $P$. For $i=1,2,\ldots,m$  we write $x_i=x_{i1}\cdots x_{it}$, where $x_{ij}$ belongs to $P_j$ for $j=1,2,\ldots,t$.	Let $Y$ be the  subgroup generated by the orbits $x^A_{ij}$. Put $Y_j=Y\cap P_j$. Since $Y$ is generated by the orbits $x^A_{ij}$ and every $P_j$ is an $A$-invariant subgroup we deduce that $Y$ is generated by $Y_1,\ldots, Y_t$ and, by  Lemma \ref{prole2.1}, we have  $Y=Y_1\cdots Y_t$.

For our purpose it is sufficient to show that $Y$ is nilpotent.
We denote by  $D_j=D_j(Y)$ the terms of the $p$-dimension  central series of $Y$. Let $L=L_p(Y)$  be the Lie algebra  generated by $D_1/D_2$ associated with the pro-$p$ group $Y$. Observe that $D_2$ coincides with $\Phi(Y)$ and  denote by  $V_1,\ldots, V_t$ the images of $Y_1,\ldots, Y_t$  in $Y/\Phi(Y)$. It follows that the Lie algebra $L$ is generated by $V_1,\ldots,V_t$. 

Since $Y$ is $A$-invariant, the group $A$ acts  on $L$ in the natural way. 
Let $A_1,\ldots, A_s$ be the distinct maximal subgroups of $A$. 
In the same spirit of what was done in the proof of  Proposition \ref{propaa1} we can define special subspaces of $L$ of any weight and 
%
show that every special subspace $W$ of $L$ corresponds to a subgroup of an $A$-special subgroup of $G$ of degree $d$.
Moreover it  follows from the profinite version of  Proposition \ref{propPS-CA2}(2) that every  element of $W$ corresponds to some element of $C_G(a)^{(d)}$ for a suitable $a\in A^{\#}$ and so, by Lemma \ref{leWZ}, it is ad-nilpotent on $L$, since all elements of $C_{G}(a)^{(d)}$ are Engel in $G$.

From the previous argument we deduce that $L=\left<V_1,\ldots, V_t\right>$  is generated by ad-nilpotent elements. As in the proof of Proposition \ref{propaa1} we extend the ground field $\mathbb{F}_p$ by a primitive $q$th root of unity $\omega$. Put $\overline{L}=L\otimes \mathbb{F}_p[\omega]$ and  identify, as usual, $L$ with the $\mathbb{F}_p$-subalgebra $L\otimes 1$ of $\overline{L}$.
In what follows we write $\overline{X}$ to denote $X\otimes \mathbb{F}_p[\omega]$, for some subspace $X$ of $L$. 
%
Let $W$ be a special  subspace of $L$. We claim that \begin{equation}\label{Claim1profinite}
\mbox{every element}\ w\ \mbox{of}\ \overline{W}\ \mbox{is ad-nilpotent in}\ \overline{L}.
\end{equation}
Indeed, choose $w\in \overline{W}$ and write
$$w=l_0+\omega l_{1}+\cdots+\omega^{q-2}l_{q-2},$$ for suitable  elements $l_0,\ldots,l_{q-2}$  of $W$ that,  in particular, correspond to some  elements $x_0,\ldots,x_{q-2}$ of $C_{G}(a)^{(d)}$, for a suitable $a\in A^{\#}$.  
Let denote  by $K_0=\langle l_{0}, \omega l_{1},\ldots, \omega^{q-2}l_{q-2}\rangle$ the subalgebra of $\overline{L}$ generated by $ l_{0}, \omega l_{1},\ldots, \omega^{q-2}l_{q-2}$.
Using a analogous  argument to that used in the proof of Proposition \ref{propaa1} we first apply  Theorem \ref{Z1992} to  show that  $K_0$  is nilpotent and, later, by appealing to  Lemma \ref{lemmanovo}, we conclude that $w$ is ad-nilpotent in $\overline{L}$, as claimed in (\ref{Claim1profinite}). 


The group $A$ acts  on $\overline{L}$ in the natural way and now the ground field is a splitting field for $A$. Since $Y$ is finitely generated, we can choose finitely many elements $v_{1},v_2,\ldots$ in $\overline{V_{1}}\cup\cdots \cup\overline{V_{t}}$,  that generate   the Lie algebra $\overline{L}$, and each $v_i$ is a common eigenvector for $A$.
Let $v$ be any Lie commutator in generators $v_1,v_2\ldots$ of $\overline{L}$. Mimicking what we did in the proof of Proposition \ref{propaa1} and arguing by induction on the weight of $v$ we can show that $v$ belongs to some $\overline{W}$, where $W$ is a special  subspace of $L$. Thus, by (\ref{Claim1profinite}), $v$ is ad-nilpotent.
%
This proves that
\begin{equation}
\mbox{any commutator in}\ v_1,v_2\ldots\ \mbox{is ad-nilpotent in } \overline{L}.
\end{equation}

Furthermore, it follows from Lemma \ref{lpi}(2) that $L$ satisfies a multilinear Lie polynomial identity. The multilinear identity is also satisfied in $\overline{L}$  and from Theorem \ref{Z1992} we deduce that $\overline{L}$ is nilpotent. Since $L$ embeds into $\overline{L}$, we get that $L$ is nilpotent as well.

According to Lazard \cite{L} the nilpotency of $L$ is equivalent to $Y$ being $p$-adic analytic (for details see  \cite[A.1 in Appendice and  Sections 3.1 and 3.4 in Ch.\ III]{L} or  \cite[1.(k) and 1.(o) in Interlude A]{GA}). By \cite[7.19 Theorem]{GA} $Y$ admits a faithful linear representation over the field of $p$-adic numbers. A result of Gruenberg \cite[Theorem 0]{G} says that in a linear group the Hirsch-Plotkin radical coincides with the set of Engel elements. 
Since $Y=Y_1\cdots Y_t$ is finitely generated and 
each $Y_j$ is contained in $C_G(a)^{(d)}$ for some suitable $a\in A^{\#}$, we deduce that $Y$ is nilpotent.
 The proof is complete.
 \end{proof}
 
The proof of part (1)  of Theorem \ref{bb1}  is analogous to that of item (2) and can be  obtained by replacing  every appeal to Theorem \ref{aa1}(2), Theorem \ref{teoPS-CA2}, Proposition  \ref{propPS-CA2} and Lemma 
\ref{lpi}(2) in the proof of (2) by an appeal to Theorem \ref{aa1}(1), Theorem \ref{teoPS-CA}, Proposition \ref{propPS-CA} and Lemma \ref{lpi}(1), respectively. Therefore we  omit the further details.

In what follows we give an outline of the proof of Theorem \ref{bb2}.
\begin{proof}[Proof of Theorem \ref{bb2}(2)]
With an argument similar to that  used in the proof of  Theorem \ref{bb1}(2) and appealing to Theorem \ref{aa2}(2) it easy to show that $G^{(d)}$ is pronilpotent and so it is the Cartesian product of its Sylow subgroups.

Choose $a\in A^{\#}$. Since $C_G(a)^{(d)}$ is Engel,  \cite[Theorem 5]{WZ} guarantees that $C_G(a)^{(d)}$ is locally nilpotent. By \cite[Lemma 2.5]{PS-CA3} there exists a positive integer $n$, elements $u,v\in C_G(a)^{(d)}$ and an open subgroup $H\leq C_G(a)^{(d)}$ such that the law  $[x,_n y]\equiv 1$ is satisfied on the cosets $uH, vH$. Let 
$[C_G(a)^{(d)}:H]=m$ and $\pi_1=\pi(m)$. Denote $O_{\pi'_1}(C_G(a)^{(d)})$ by $T$. Thus, $T$ satisfies the law $[x,_n y]\equiv 1$, that is, $T$ is $n$-Engel. 
%
%
By the result of Burns and Medvedev \cite{BM} the subgroup $T$ has a nilpotent normal subgroup $U$ such that $T/U$ has finite exponent, say $e$. Set $\pi_2=\pi(e)$. The sets $\pi_1$ and $\pi_2$ depend on the choice of $a\in A^{\#}$, so strictly speaking they should be denoted by $\pi_1(a)$ and $\pi_2(a)$. For each such choice let $\pi_a=\pi_1(a)\cup\pi_2(a)$. We repeat this argument for every $a\in A^{\#}$. Set 
$\pi=\cup_{a\in A^{\#}}\pi_a$ and $R=O_{\pi'}(G^{(d)})$. Since all sets $\pi_1(a)$ and $\pi_2(a)$ are finite, so is $\pi$. 
Let $p_1,\ldots,p_r$ be the finitely many primes in $\pi$ and $S_1,\ldots,S_r$ be the corresponding Sylow subgroups of $G^{(d)}$. Then $G^{(d)}=S_1\times\cdots\times S_r\times R$.

The choice of the set $\pi$ guarantees that $C_R(a)^{(d)}$ is nilpotent for every $a\in A^{\#}$. 
Using the routine inverse limit argument we deduce from  
 \cite[Theorem 31]{PS-CA2} that $R^{(d)}$ is nilpotent. Thus $R$ is solvable. We claim that $R$ is an Engel group. Indeed, combining  the profinite version of  Proposition \ref{propPS-CA2}(3) with Lemma \ref{prole2.1} we obtain that $R=R_1\cdots R_t$,
where $R_i=R\cap H$ and $H$ is some $A$-special subgroup of $G$ of degree $d$.
Choose arbitrarily $x,y\in R$. Therefore it is sufficient to prove that $\left<x,y\right>$ is nilpotent. Note that we can write $x=x_1\cdots x_t$ and $y=y_1\cdots y_t$, where each $x_i$ and $y_i$ belongs to $R_i$, for $i=1,\ldots,t$. Consider $Y=\left<x^A_i,y_i^A:i\leq t\right>$ and set $K$ be the abstract  subgroup generated by the elements $x^A_i$ and $y_i^A$.  Since $K$ is a dense subgroup of $Y$, in order to prove that $Y$ is nilpotent is enough to prove that $K$ is nilpotent. 

 By construction and since   $2^d\leq r-2$, there exists $a\in A^{\#}$ such that  the centralizer $C_G(A_j)$ is contained in $C_G(a)$ and each subgroup $R_i$ is contained in $C_G(a)^{(d)}$. Thus
\begin{equation}\label{eq1.9}
\mbox{each element}\ x\in R_i\ \mbox{is Engel in}\ C_G(A_j),\ \mbox{for any}\ j\leq s.
\end{equation}
In the same spirit of what was done in the proof of Theorem \ref{aa2}(2), it is possible to show, by induction on  the derived length of $K$, that all generators of $K$ are Engel elements in $K$. Now by a well-known result of Gruenberg \cite[12.3.3]{Rob} we conclude that $K$ is nilpotent and, so, $Y$ is nilpotent, as well. 
In particular, we deduce that $R$ is Engel, as desired. Thus, by \cite[Theorem 5]{WZ} $R$ is locally nilpotent. 
Since $G^{(d)}=S_1\times\cdots\times S_r\times R$, for our purpose it is sufficient to prove that each subgroup $S_i$ is locally nilpotent, for $i=1,\ldots,r$.

Let $P$ be such a $p$-Sylow subgroup of $G^{(d)}$, for some $p\in \pi$. By the profinite version of Theorem \ref{teoPS-CA2} we have $P=P_1\cdots P_t$, where any $P_j=P\cap H$ and $H$ is an $A$-special subgroup of $G$ with degree $d$.


Choose  arbitrary elements $x_1,\ldots,x_m$ in $P$. Let us write $x_i=x_{i1}\cdots x_{it}$, for $i=1,\ldots,m$, where  each $x_{ij}$ belongs to $P_j$ and so  to $C_G(a)^{(d)}$ for a suitable $a\in A^{\#}$. Let $X$ be the  subgroup generated by the orbits $x^A_{ij}$ and let $L=L_p(X)$. By the assumptions we have  $L=\sum_{j\leq s}C_L(A_j)$ and using the Lie theoretical machinery it is possible to prove that $L$ is nilpotent.

%
%
%
%

According to Lazard \cite{L} the nilpotency of $L$ is equivalent to $X$ being $p$-adic analytic. The Lubotzky and Mann theory \cite{luma} now tells us that $X$ is of finite rank, that is, all closed subgroups of $X$ are finitely generated. In particular, we conclude that $C_X(a)^{(d)}$ is finitely generated for every $a\in A^{\#}$. It follows from \cite[Theorem 5]{WZ} that $C_X(a)^{(d)}$ is nilpotent. By  the profinite quantitative version of 
\cite[Theorem 31]{PS-CA2} $X^{(d)}$ is nilpotent. Then $X$ is soluble.  
Finally by mimicking what we did above for $Y$ we can prove that $X$ is nilpotent. This concludes the proof.
\end{proof}
The proofs of part (1) and (2) of Theorem \ref{bb2} are very similar. We conclude  noting that the proof of item (1) of Theorem \ref{bb2} can be obtained by replacing  every appeal to Theorem \ref{aa2}(2), Theorem \ref{teoPS-CA2}, Proposition  \ref{propPS-CA2} and Lemma \ref{lpi}(2) in the proof of (2) by an appeal to Theorem \ref{aa2}(1) Theorem \ref{teoPS-CA}, Proposition \ref{propPS-CA} and Lemma \ref{lpi}(1), respectively. Therefore we will omit further details.

\end{document}